\numberwithin{equation}{section}
\newtheorem{theorem}{Theorem}[section]
\newtheorem{lemma}[theorem]{Lemma}
\newtheorem{corollary}[theorem]{Corollary}
\newtheorem{proposition}[theorem]{Proposition}
\theoremstyle{definition}
\newtheorem{definition}[theorem]{Definition}
\newtheorem{assumption}[theorem]{Assumption}
\newtheorem{example}[theorem]{Example}
\theoremstyle{remark}
\newtheorem{remark}[theorem]{Remark}
\begin{document}

\def\be{\begin{eqnarray}}
\def\ee{\end{eqnarray}}
\def\p{\partial}
\def\no{\nonumber}
\def\eps{\epsilon}
\def\de{\delta}
\def\De{\Delta}
\def\om{\omega}
\def\Om{\Omega}
\def\f{\frac}
\def\th{\theta}
\def\la{\lambda}
\def\lab{\label}
\def\b{\bigg}
\def\var{\varphi}
\def\na{\nabla}
\def\ka{\kappa}
\def\al{\alpha}
\def\La{\Lambda}
\def\ga{\gamma}
\def\Ga{\Gamma}
\def\ti{\tilde}
\def\wti{\widetilde}
\def\wh{\widehat}
\def\ol{\overline}
\def\ul{\underline}
\def\Th{\Theta}
\def\si{\sigma}
\def\Si{\Sigma}
\def\oo{\infty}
\def\q{\quad}
\def\z{\zeta}
\def\co{\coloneqq}
\def\eqq{\eqqcolon}
\def\bt{\begin{theorem}}
\def\et{\end{theorem}}
\def\bc{\begin{corollary}}
\def\ec{\end{corollary}}
\def\bl{\begin{lemma}}
\def\el{\end{lemma}}
\def\bp{\begin{proposition}}
\def\ep{\end{proposition}}
\def\br{\begin{remark}}
\def\er{\end{remark}}
\def\bd{\begin{definition}}
\def\ed{\end{definition}}
\def\bpf{\begin{proof}}
\def\epf{\end{proof}}
\def\bex{\begin{example}}
\def\eex{\end{example}}
\def\bq{\begin{question}}
\def\eq{\end{question}}
\def\bas{\begin{assumption}}
\def\eas{\end{assumption}}
\def\ber{\begin{exercise}}
\def\eer{\end{exercise}}
\def\mb{\mathbb}
\def\mbR{\mb{R}}
\def\mbZ{\mb{Z}}
\def\mc{\mathcal}
\def\mcS{\mc{S}}
\def\ms{\mathscr}
\def\lan{\langle}
\def\ran{\rangle}
\def\lb{\llbracket}
\def\rb{\rrbracket}

\title[Three dimensional smooth transonic flows]{Some three dimensional smooth transonic flows for the steady Euler equations with an external force}

\author{Shangkun WENG}
\address{School of Mathematics and Statistics and Hubei Key Laboratory of Computational Science, Wuhan University, Wuhan, Hubei Province, China, 430072.}
\email{skweng@whu.edu.cn}

\author{Zhouping XIN}
\address{The Institute of Mathematical Sciences and Department of Mathematics, The Chinese University of Hong Kong, Shatin, NT, Hong Kong, China.}
\email{zpxin@ims.cuhk.edu.hk}
\keywords{smooth transonic flows, Beltrami flows, elliptic-hyperbolic mixed, multiplier, nonuniform proportionality factor, singular perturbation.}
\subjclass[2010]{35M12, 76H05, 35L65, 76N10, 76N15.}
\thanks{Updated on \today}
\maketitle

\begin{abstract}
  We establish the existence and uniqueness of some smooth accelerating transonic flows governed by the three dimensional steady compressible Euler equations with an external force in cylinders with arbitrary cross sections, which include both irrotational flows and Beltrami flows with nonuniform proportionality factors. One of the key ingredients in the analysis of smooth transonic irrotational flows is the well-posedness theory of classical solutions in $H^4$ to a linear elliptic-hyperbolic mixed second order differential equation of Keldysh type in cylinders with mixed boundary conditions. This is achieved by extending the problem to an auxiliary linear elliptic-hyberbolic-elliptic mixed problem in a longer cylinder where the governing equation becomes elliptic at the exit of the new cylinder, so that one can use the multiplier method and the cut-off techniques to derive the $H^2$ and higher order estimates in transonic regions. It is further shown that the energy estimate can be closed in the $H^4$ framework. For smooth transonic Beltrami flows, we solve a transport equation for the proportionality factor and a type-changing enlarged deformation-curl system with mixed boundary conditions. The compatibility conditions for the $H^4$ estimate to the enlarged deformation-curl system near the intersection between the entrance and the cylinder wall play a crucial role in the analysis.
\end{abstract}

\section{Introduction and main results}\label{1df}

We investigate smooth accelerating transonic irrotational and Beltrami flows to the steady isentropic compressible Euler equations with an external force:
\begin{eqnarray}\label{3deuler-f}
\begin{cases}
\text{div }(\rho {\bf u}) =0,\\
\text{div }(\rho {\bf u} \otimes {\bf u} + P  I_3) = \rho \na \Phi,
\end{cases}
\end{eqnarray}
where ${\bf u} = (u_1, u_2, u_3)$, $\rho $ represent the velocity and density respectively, $P = \rho^{\gamma}$ with $\gamma >1$ is the pressure, $\Phi $ is a potential for an external force. Denote the Bernoulli's quantity $B = \f 12 |{\bf u}|^2 + h (\rho) - \Phi $ with the enthalpy $h (\rho) =\frac{\gamma}{\gamma- 1 } \rho^{\gamma -1} $.

Since 1940s, there have been many literatures on continuous subsonic-sonic and smooth transonic steady flows. Gilbarg and Shiffman \cite{gs54} showed that for smooth  irrotational subsonic-sonic flows past a two dimensional profile, the sonic points must locate on the profile. Bers \cite{bers58} studied the possible continuation of a two dimensional irrotational subsonic-sonic flow across a sonic curve when such a flow was assumed to exist and found that sonic points can be classified into two classes: exceptional and nonexceptional. A sonic point in a $C^2$ transonic flow is exceptional if and only if the velocity is orthogonal to the sonic curve at this point, otherwise it is called nonexceptional. Bers further proved that the subsonic-sonic flow can be continued locally as a supersonic flow without discontinuities in a unique way across the sonic curve provided that there are no exceptional points on a sonic curve. The structure of the sonic curves and properties of the set of exceptional points for smooth transonic flows in 2D general nozzles have been studied further by Wang-Xin in \cite{wx16,wx21}. Friedrichs \cite{fri58} developed a general and powerful theory for positive symmetric systems of first order partial differential equations. Morawetz \cite{mo56} proved that inviscid steady irrotational transonic gas flows past a two-dimensional symmetric profile cannot remain smooth but must contain shock waves if the profile is perturbed in the supersonic region while fixing the speed at infinity. A well-posedness theory for the $H^1$ weak solutions to some linear partial differential equations of mixed type in multidimensional Euclidean spaces had been developed in the late 1970s and in the 1980s (see \cite{eg87,ka77,vr77}). Kuzmin \cite{ku02} further simplified some techniques therein and applied the theory to a perturbation problem of the von Karman equation in a rectangle.

In 2004, Morawetz \cite{mora04} surveyed the existence and uniqueness results for mixed equations and their applications to the problems of transonic flows. The authors in \cite{lmp07,lmp15} proved the existence, uniqueness and the variational characterization of $H^1$ weak solutions to the Dirichlet boundary value problem for mixed equations of Tricomi type, which had been generalized in \cite{o12} for various mixed equations of Keldysh type. Note also that by the compensated compactness, the existence of bounded subsonic-sonic flows past a profile and in nozzles were established in \cite{cdsw07,chw16,hww11,xx07}, yet there is no information available about the locations of the sonic points and the regularity nearby. There are also many studies on transonic flows involving shocks and contact discontinuities \cite{bf11,bp19,cf03,cxz22,fx21,hkwx21,lxy13,lxy16,wx23a}.

Wang and Xin \cite{wx13} proved the structural stability of a radially symmetric continuous irrotational subsonic-sonic flow in a convergent nozzle with straight solid walls under suitable perturbations of the incoming flow at the entrance. For the continuous subsonic-sonic flows constructed in \cite{wx13}, the acceleration blows up at the sonic curve and the sonic curve was treated as a free boundary. Wang and Xin \cite{wx16,wx19,wx21} established the existence and uniqueness of smooth irrotational transonic flows of Meyer type for the potential equation on two dimensional De Laval nozzles, whose sonic points are all exceptional and characteristically degenerate, and must locate at the throat. There are some further developments in \cite{wa19,wa22}. Another type of transonic flows are smooth transonic spiral flows in concentric cylinders (see \cite[Section 104]{cf48} and \cite{wxy21a,wxy21b}), whose sonic points are all nonexceptional and noncharacteristically degenerate. Different from \cite{wx19,wx21}, the degeneracy type of the linear second order elliptic-hyperbolic mixed equation obtained by linearizing around radially symmetric transonic spiral flows is similar to that of the Tricomi equation. Recently, Weng and Xin \cite{wx23b} proposed a quasi two dimensional steady Euler flow model in De Laval nozzles which generalizes the classical quasi one dimensional model. They first proved the existence and uniqueness of smooth transonic flows of Meyer type to the quasi one-dimensional model. The relation between the degeneracy rate of the velocity near the sonic point and the degree of flatness of the nozzle wall at the throat are classified precisely. The existence and uniqueness of smooth transonic flows with nonzero vorticity and positive acceleration to the quasi two dimensional model was established in \cite{wx23b} based on the analysis of a linear second order elliptic-hyperbolic mixed equation of Keldysh type. The authors in \cite{bdx} had proved the existence and uniqueness of a class of smooth transonic solutions to the 2-D steady Euler-Poisson equations. However, it seems difficult to generalize the previous studies to the truly 3-dimensional setting, which is the main focus of this paper.


Motivated by \cite{wx23b}, the authors in \cite{wz24a} proved the existence and uniqueness of smooth accelerating transonic flows to one dimensional steady Euler equations with a given external force $\bar{f} (x_1)$
\begin{eqnarray}\label{1df}\begin{cases}
(\bar{\rho} \bar{u})'(x_1)=0, \ \ \forall x_1\in [L_0,L_1],\\
\bar{\rho} \bar{u} \bar{u}'+ \frac{d}{dx_1} P(\bar{\rho})= \bar{\rho} \bar{f}(x_1), \ \ \forall x_1\in [L_0,L_1],\\
\bar{\rho}(L_0)=\rho_0>0,\ \ \bar{u}(L_0)= u_0>0,
\end{cases}\end{eqnarray}
where $\bar{f}(x_1)$ is a given infinitely differentiable function on $[L_0,L_1]$ $(L_0 < 0 < L_1)$ satisfying
\be\label{1df04}\begin{cases}
\bar{f}(x_1)<0,\ \forall x\in [L_0,0),\\
\bar{f}(0)=0,\\
\bar{f}(x_1)>0, \ \ \forall x_1\in (0,L_1]
\end{cases}\ee
and the initial state at $x_1=L_0$ is subsonic, i.e. $u_0^2<c^2(\rho_0)=\gamma \rho_0^{\gamma-1}$. Assume that
\be\label{1df90}\begin{cases}
\int_{L_0}^{0} \bar{f}(\tau) d\tau= \frac{\gamma+1}{2(\gamma-1)} \gamma^{\frac{2}{\gamma+1}} (\rho_0 u_0)^{\frac{2(\gamma-1)}{\gamma+1}}- B_0,\ \ \ B_0 \co \frac{1}{2} u_0^2 + \frac{\gamma}{\gamma-1} \bar{\rho}_0^{\gamma-1},\\
\bar{f}'(0)>0.
\end{cases}\ee

The following Lemma was proved in \cite{wz24a} .
\begin{lemma}\label{1dfpo}({\bf 1-d transonic flows with positive acceleration.}) Assume that $(u_0, \rho_0)$ is subsonic and the external force satisfies \eqref{1df04} and \eqref{1df90}. The problem \eqref{1df} has a unique solution $(\bar{\rho}(x_1), \bar{u}(x_1))\in
C^{\infty}([L_0,L_1])$ which is subsonic in $[L_0,0)$, supersonic in $(0, L_1]$ with a sonic state at $x_1=0$.
\end{lemma}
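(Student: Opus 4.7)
The plan is to reduce \eqref{1df} to a single scalar transcendental equation for $\bar{u}(x_1)$ via the mass and Bernoulli integrals, and then to analyze this equation carefully across the sonic point. The principal obstacle will be $C^\infty$ regularity at $x_1=0$, where the derivative of the reduced equation vanishes so that a direct implicit function theorem argument fails.

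First I would integrate $(\bar{\rho}\bar{u})'=0$ to obtain $\bar{\rho}\bar{u}\equiv m_0 := \rho_0 u_0$, and combine this with the momentum equation to derive Bernoulli's law
\[
F(\bar{u}(x_1)) = B_0 + \Phi(x_1),\q F(u) := \tfrac12 u^2 + \tfrac{\gamma}{\gamma-1}(m_0/u)^{\gamma-1},\q \Phi(x_1) := \int_{L_0}^{x_1}\bar{f}(\tau)\,d\tau.
\]
A direct computation gives $F'(u) = u - \gamma m_0^{\gamma-1} u^{-\gamma}$, so $F$ is strictly convex on $(0,\infty)$ with its unique minimum at the sonic speed $u_\star := (\gamma m_0^{\gamma-1})^{1/(\gamma+1)}$, and $F''(u_\star) = \gamma+1 > 0$. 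By \eqref{1df04}, $\Phi$ is strictly decreasing on $[L_0,0]$ and strictly increasing on $[0,L_1]$, hence attains its minimum uniquely at $x_1=0$; the compatibility condition \eqref{1df90}$_1$ is exactly $B_0+\Phi(0)=F(u_\star)$, which guarantees $B_0+\Phi(x_1)\geq F(u_\star)$ with equality only at $x_1=0$, so the right-hand side of the scalar equation always lies in the range of each monotone branch of $F$.

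Next, existence and uniqueness of a continuous solution $\bar{u}$ follow by inverting the strictly monotone subsonic branch $F|_{(0,u_\star]}$ on $[L_0,0]$ and the strictly monotone supersonic branch $F|_{[u_\star,\infty)}$ on $[0,L_1]$, with the two stitched together at $\bar{u}(0)=u_\star$. The subsonic hypothesis $u_0^2<\gamma\rho_0^{\gamma-1}$ gives $u_0<u_\star$ and a direct check yields $F(u_0)=B_0=B_0+\Phi(L_0)$, recovering the initial condition; then $\bar{\rho}=m_0/\bar{u}$ furnishes a solution to \eqref{1df}. Smoothness on $[L_0,0)\cup(0,L_1]$ is standard from the implicit function theorem since $F'(\bar{u})\neq 0$ off $x_1=0$, and uniqueness among $C^1$ solutions follows because any such solution satisfies the two conservation laws and hence the scalar equation, on each branch of which monotonicity pins down $\bar{u}$.

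The main obstacle is $C^\infty$ regularity at $x_1=0$. To overcome the degeneracy $F'(u_\star)=0$, I would apply Taylor's theorem with integral remainder to factor
\[
F(u)-F(u_\star) = (u-u_\star)^2 G(u),\q \Phi(x_1)-\Phi(0) = x_1^2 H(x_1),
\]
with $G,H$ smooth near $u_\star$ and $0$ respectively, and $G(u_\star)=\tfrac{\gamma+1}{2}>0$, $H(0)=\tfrac12\bar{f}'(0)>0$; the positivity of $H(0)$ crucially uses both $\bar{f}(0)=0$ and $\bar{f}'(0)>0$ from \eqref{1df04}--\eqref{1df90}. Setting $\Psi(u):=(u-u_\star)\sqrt{G(u)}$, which is $C^\infty$ near $u_\star$ with $\Psi'(u_\star)=\sqrt{G(u_\star)}>0$ and hence a local diffeomorphism by the inverse function theorem, and $\Theta(x_1):=x_1\sqrt{H(x_1)}$, which is $C^\infty$ near $0$, the scalar equation becomes $\Psi(\bar{u}(x_1))^2=\Theta(x_1)^2$. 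Choosing the sign so that $\bar{u}-u_\star$ has the same sign as $x_1$, matching the subsonic and supersonic branches constructed above, yields $\bar{u}(x_1)=\Psi^{-1}(\Theta(x_1))$ near the origin, which is $C^\infty$. Combined with smoothness on $[L_0,0)\cup(0,L_1]$, this gives $\bar{u}\in C^\infty([L_0,L_1])$ and completes the proof.
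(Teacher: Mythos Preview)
Your proof is correct and complete. The paper itself does not give a proof of this lemma but merely cites the reference \cite{wz24a}, so there is no in-paper argument to compare against; your reduction to the scalar Bernoulli relation $F(\bar{u})=B_0+\Phi(x_1)$, followed by the Morse-type factorization $F(u)-F(u_\star)=(u-u_\star)^2G(u)$ and $\Phi(x_1)-\Phi(0)=x_1^2H(x_1)$ to extract $C^\infty$ regularity across the sonic point, is the standard and natural approach to this kind of result.
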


The smooth transonic flow with positive acceleration given in Lemma \ref{1dfpo} is called the background transonic flow in this paper. The authors in \cite{wz24a} showed the structural stability of the background transonic flows under irrotational axisymmetric perturbations of suitable boundary conditions. In this paper, we will establish the structural stability under more general three dimensional perturbations of suitable boundary conditions and external forces in a cylinder $\Omega \co (L_0, L_1) \times \mathbb{E}$, where $\mb{E}$ is a bounded simply-connected domain in $\mbR^2$ with $C^3$ smooth boundary.

By the vector identity ${\bf u}\cdot\nabla {\bf u}= \nabla \frac12 |{\bf u}|^2- {\bf u}\times \text{curl }{\bf u}$, the momentum equations in \eqref{3deuler-f} imply
\be\label{momentum}
\nabla (\frac{1}2|{\bf u}|^2 + h(\rho)-\Phi)={\bf u}\times \text{curl }{\bf u}\ \ \ \text{in }\ \Om.
\ee
where $\Phi(x_1,x')=\bar{\Phi}(x_1) + \epsilon \Phi_0(x_1,x')$ is a small perturbation of the potential of the background external force, i.e. $\bar{\Phi}(x_1)=\int_{L_0}^{x_1} \bar{f}(s) ds$ and $\Phi_0\in H^4(\Omega)$ satisfies the condition
\be\label{Phi1}
(n_2\p_2+n_3\p_3) \Phi_0(x_1,x')=0\ \ \ \text{on }\Ga_w:= [L_0,L_1]\times \p \mathbb{E}.
\ee

We start with isentropic irrotational flows so that $\frac{1}2|{\bf u}|^2 + h(\rho)-\Phi\equiv B_0$ and
\be\label{irrotation}
\text{curl }{\bf u}\equiv 0 \ \ \ \text{in }\Omega.
\ee
Thus there exists a potential function $\varphi$ such that ${\bf u}=\nabla \varphi$, and
\be\label{den1}
\rho=H(|\nabla\var|^2)=\bigg(\frac{\gamma-1}{\ga}\bigg)^{\frac{1}{\gamma-1}}\bigg(B_0+\Phi-\frac12 |\nabla \varphi|^2\bigg)^{\frac{1}{\gamma-1}}.
\ee
Substituting \eqref{den1} into the continuity equation yields
\be\label{po1}
c^2(\rho)\Delta \varphi- \sum_{i,j=1}^3 \p_i \varphi\p_j \varphi \p_{ij}^2 \varphi +\sum_{i=1}^3 \p_i\varphi \p_i \Phi=0, 
\ee
where $c^2(\rho)=(\gamma-1)(B_0+\Phi-\frac{1}{2}|\nabla \varphi|^2)$.

A potential function for the background transonic flow in Lemma \ref{1dfpo} can be taken as $\bar{\varphi} = \bar{\varphi } (x_1) = \int_{L_0}^{x_1} \bar{u}(\tau) d \tau$, which satisfies
\be\no
(c^2(\bar{\rho})- |\bar{\varphi}'|^2)\bar{\varphi}'' + \bar{f} \bar{\varphi}' =0
\ee
with $c^2(\bar{\rho})=(\gamma-1)(B_0+\bar{\Phi}-\frac{1}{2}\bar{u}^2)$.

The boundary conditions for \eqref{po1} are prescribed as follows
\be\label{fbcs}\begin{cases}
\p_j \varphi(L_0,x')= \epsilon \p_j h_0(x'), \ \ &\forall x'\in \mb{E}, j=2,3\\
(n_2 \p_2+n_3 \p_3)\varphi(x_1, x')=0, \ \ &\forall (x_1,x')\in \Ga_w:=[L_0,L_1]\times \p \mathbb{E},
\end{cases}\ee
where $h_0\in H^{4}(\mb{E})$ satisfies the following compatibility conditions
\be\label{fcp1}
(\p_2 h_0, \p_3 h_0)(x')=(n_2\p_2 +n_3\p_3)(\p_2 h_0,\p_3 h_0)(x')=0,\ \ \ \ \ \text{on }\p\mb{E}
\ee
and $h_0(\bar{x}')=0$ for a fixed point $\bar{x}'\in \mathbb{E}$.

Note that the first conditions in \eqref{fbcs} indicate some restrictions on the flow angles at the entrance and the second one in \eqref{fbcs} is the slip boundary condition on the wall, both of them are physically acceptable and experimentally controllable. Mathematically, it will be shown that these boundary conditions are also admissible for the linearized mixed type equation (see Lemma \ref{H1e}) and enable us to derive the $H^1$ and more importantly, the higher-order energy estimates. Note that no boundary conditions are imposed at the exit of the cylinder where the flows are expected to be hyperbolic.

Then the first main result in this paper states the existence and uniqueness of smooth transonic irrotational flows to \eqref{3deuler-f} which are close to the background transonic flow.
\begin{theorem}\label{3dirro}
{\it Let $(\bar{\rho},\bar{u})$ be a background transonic flow given in Lemma \ref{1dfpo}. Assume that $\gamma>1$, $\Phi_0\in H^4(\Om)$ and $h_0\in H^{4}(\mathbb{E})$ satisfy \eqref{Phi1} and \eqref{fcp1}, respectively. Then there exist positive
constants $C$ and $\epsilon_0$ depending on the background flow, the external force and the boundary datum, such that for any $0<\epsilon<\epsilon_0$, the problem \eqref{3deuler-f},\eqref{irrotation} with \eqref{fbcs} has a unique smooth transonic irrotational solution ${\bf u}=\nabla \varphi$, where $\varphi\in H^4(\Omega)$ satisfies
\be\label{3d1}
\|\varphi-\bar{\varphi}\|_{H^4(\Omega)}\leq C\epsilon.
\ee

Moreover, all the sonic points form a $C^1$ smooth surface given by $x_1=\xi(x')\in C^{1}(\ol{\mathbb{E}})$, which is close to the background sonic front $x_1=0$ in the sense that
\be\label{3dsonic}
\|\xi(x')\|_{C^1(\ol{\mb{E}})}\leq C\epsilon.
\ee
}\end{theorem}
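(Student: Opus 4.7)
The plan is to recast the boundary value problem \eqref{po1}--\eqref{fbcs} as a small perturbation of the background flow. Writing $\psi := \varphi - \bar\varphi$, one obtains a nonlinear equation of the form
\[
L\psi = \epsilon\,R(\Phi_0) + Q(\psi,\nabla\psi,\nabla^2\psi)\quad\text{in }\Omega,
\]
where the linearization at $\bar\varphi$ is the second order operator
\[
L\psi = (c^2(\bar\rho)-\bar u^2)\partial_1^2\psi + c^2(\bar\rho)(\partial_2^2+\partial_3^2)\psi + (\text{first order terms in }\psi),
\]
$R$ is a linear functional of $\Phi_0$ coming from the mismatch between $\bar\Phi$ and $\Phi=\bar\Phi+\epsilon\Phi_0$, $Q$ is smooth and quadratic in $\nabla^2\psi$, and the boundary data on $\{L_0\}\times\mathbb{E}$ and $\Gamma_w$ are prescribed as in \eqref{fbcs} at order $O(\epsilon)$, while no data are imposed at $\{L_1\}\times\mathbb{E}$. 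Since $c^2(\bar\rho)-\bar u^2$ is strictly positive on $[L_0,0)$, vanishes simply at $x_1=0$ (as $\bar f'(0)>0$ forces its derivative to be nonzero there), and is strictly negative on $(0,L_1]$, the operator $L$ is elliptic-hyperbolic mixed of Keldysh type along $\partial_1$.

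The central analytic input is the $H^4$ well-posedness theory for $L$ advertised in the abstract and developed later in the paper: one has a linear solution operator $\mathcal S$ with
\[
\|\mathcal S g\|_{H^4(\Omega)} \le C\bigl(\|g\|_{H^2(\Omega)} + \|h_0\|_{H^4(\mathbb{E})}\bigr),
\]
provided the corner compatibility conditions \eqref{Phi1} and \eqref{fcp1} are in force. Taking this estimate as a black box, set up the Picard iteration $\mathcal T\psi := \mathcal S\bigl(\epsilon R(\Phi_0) + Q(\psi,\nabla\psi,\nabla^2\psi)\bigr)$ on the ball
\[
\mathcal B_M := \{\psi \in H^4(\Omega) : \|\psi\|_{H^4(\Omega)} \le M\epsilon,\ \psi(L_0,\bar x')=0\}.
\]
The embedding $H^4(\Omega)\hookrightarrow C^{2,\alpha}(\overline\Omega)$ makes $Q:H^4\to H^2$ bounded and locally Lipschitz with $\|Q(\psi)\|_{H^2}\le C\|\psi\|_{H^4}^2 = O(\epsilon^2)$, while $\epsilon R(\Phi_0)$ is $O(\epsilon)$ in $H^2$. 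Choosing $M$ large relative to $\|\Phi_0\|_{H^4}+\|h_0\|_{H^4}$ and $\epsilon_0$ small yields $\mathcal T(\mathcal B_M)\subset \mathcal B_M$ together with a strict contraction in a weaker norm (say $H^3$); the Banach fixed point theorem combined with interpolation identifies a unique $\psi\in\mathcal B_M$, and $\varphi=\bar\varphi+\psi$ satisfies \eqref{3d1}. Uniqueness in the class $\|\varphi-\bar\varphi\|_{H^4}\le C\epsilon$ follows from the same contraction applied to the difference of two hypothetical solutions.

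For the sonic surface, set $G(x_1,x') := (\gamma+1)|\nabla\varphi|^2 - 2(\gamma-1)(B_0+\Phi)$, so that $G=0$ characterizes the sonic set. At the background flow $G(0,\cdot)\equiv 0$, and a direct computation using $\bar f(0)=0$ gives
\[
\partial_{x_1}G(0,x')\big|_{\varphi=\bar\varphi} = 2(\gamma+1)\bar u(0)\bar u'(0) \ne 0,
\]
where $\bar u'(0) = \sqrt{\bar f'(0)/(\gamma+1)} > 0$ is obtained by differentiating the background ODE $(c^2(\bar\rho)-\bar u^2)\bar u' + \bar u\bar f = 0$ at $x_1=0$ and invoking $\bar f'(0)>0$. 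Since $\varphi\in H^4(\Omega)\hookrightarrow C^2(\overline\Omega)$ is $C^2$-close to $\bar\varphi$, the implicit function theorem applied to $G$ yields a unique $C^1$ graph $x_1=\xi(x')\in C^1(\overline{\mathbb{E}})$ with quantitative control $\|\xi\|_{C^1}\le C\epsilon$ inherited from \eqref{3d1}, which is \eqref{3dsonic}.

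The chief obstacle, as the abstract flags, is the $H^4$ linear theory for $L$: the type changes across $x_1=0$, so neither elliptic nor hyperbolic techniques apply directly, and the absence of a boundary condition at $\{L_1\}\times\mathbb{E}$ blocks a direct variational treatment. The strategy would be to extend $\Omega$ into a longer cylinder in which the operator becomes elliptic near the artificial exit, select a multiplier tailored to the Keldysh degeneracy of $c^2(\bar\rho)-\bar u^2$ to close the basic $H^1$ estimate, and then differentiate the equation tangentially and normally in a manner that preserves the multiplier structure in order to ascend to $H^2$, $H^3$, $H^4$. The corner compatibility \eqref{fcp1} is critical for avoiding loss of regularity at $\{L_0\}\times\partial\mathbb{E}$ where the Neumann-type entrance data meet the wall slip condition.
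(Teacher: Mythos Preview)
Your overall architecture is right---reformulate as a perturbation, solve the linearized mixed-type problem, iterate, then apply the implicit function theorem for the sonic surface---but there is a genuine gap in how you set up the iteration that prevents it from closing in $H^4$.

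The linear estimate you invoke, $\|\mathcal S g\|_{H^4}\le C(\|g\|_{H^2}+\|h_0\|_{H^4})$, is stronger than what the paper establishes and stronger than one should expect for an operator that is genuinely hyperbolic on $(0,L_1]\times\mathbb E$. The linear theory developed in the paper yields only a one-derivative gain,
\[
\|\psi\|_{H^4(\Omega)}\le C_*\,\|F\|_{H^3(\Omega)},
\]
with $C_*$ depending on the $H^3(\Omega)$ norms of the coefficients $k_{ij}$ (Lemma~\ref{H4}). With this correct estimate your semilinear Picard map $\mathcal T\psi=\mathcal S(\epsilon R+Q)$ does not close: the remainder $Q$ necessarily contains the principal-part error $\sum_{i,j}\bigl(k_{ij}(\nabla\psi)-\bar k_{ij}\bigr)\partial_{ij}^2\psi$, which is bilinear in $(\nabla\psi,\nabla^2\psi)$ (not quadratic in $\nabla^2\psi$ as you wrote). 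For $\psi\in H^4(\Omega)$ this product of an $H^3$ factor and an $H^2$ factor lies only in $H^2(\Omega)$ in three dimensions, not in $H^3(\Omega)$; feeding $H^2$ data into the linear solution operator returns only $H^3$, and the iteration drops out of $H^4$.

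The paper circumvents this by running a \emph{quasilinear} rather than a semilinear iteration (see \eqref{li0}): given $\hat\psi\in\Xi_{\delta_0}$ one freezes the second-order coefficients and solves
\[
\sum_{i,j=1}^3 k_{ij}(\nabla\hat\psi+\epsilon\nabla\psi_0)\,\partial_{ij}^2\psi+\bar k_1(x_1)\,\partial_1\psi=F(\nabla\hat\psi),
\]
so that the right-hand side depends only on \emph{first} derivatives of $\hat\psi$ and hence lies in $H^3$ when $\hat\psi\in H^4$. The variable principal coefficients $k_{ij}(\nabla\hat\psi+\epsilon\nabla\psi_0)$ are absorbed into the linear operator, and the entire $H^1$--$H^4$ theory (multiplier, extension to an elliptic-hyperbolic-elliptic problem, cut-offs, corner analysis) is carried out for this operator with constants controlled by $\|k_{ij}\|_{H^3}\lesssim 1+\epsilon+\delta_0$. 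The self-map then closes with $\delta_0=\sqrt{\epsilon}$, and contraction is verified in the $H^1$ norm via the basic energy estimate (Lemma~\ref{H1e}), not in $H^3$. Your treatment of the sonic surface is essentially correct and matches the paper's, which works with $|\mathbf M|^2-\bar M^2$, the strict monotonicity of $\bar M^2$ in $x_1$, and the embedding $H^3\hookrightarrow C^{1,1/2}$.
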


Next we turn to the transonic flows with non-trivial vorticities, and consider general Beltrami flows such that $\frac{1}2|{\bf u}|^2 + h(\rho)-\Phi\equiv B_0$. Then it follows from \eqref{momentum} that
\be\no
{\bf u}\times \text{curl }{\bf u}\equiv 0 \ \ \ \text{in }\Om.
\ee
Thus the vorticity $\text{curl }{\bf u}$ is always parallel to the velocity and can be represented as
\be\no
\text{curl }{\bf u} (x)= \ka(x) \rho(x){\bf u}(x)
\ee
for some scalar function $\ka(x)$. It follows from the continuity equation in \eqref{3deuler-f} that
\be\no
{\bf u}\cdot\nabla \ka =0\ \ \ \text{in }\Omega.
\ee
By the definition of the Bernoulli's quantity, there holds
\be\label{bel4}
\rho=H(|{\bf u}|^2,\Phi)=\bigg(\frac{\gamma-1}{\ga}\bigg)^{\frac{1}{\gamma-1}}\bigg(B_0+\Phi-\frac12 |{\bf u}|^2\bigg)^{\frac{1}{\gamma-1}}.
\ee

Therefore for any Beltrami flows, the steady Euler system \eqref{3deuler-f} is reduced to
\be\label{belt}\begin{cases}
\text{div }(H(|{\bf u}|^2,\Phi){\bf u})=0,\\
\text{curl }{\bf u} (x)= \ka(x) H(|{\bf u}|^2,\Phi){\bf u}(x),\\
{\bf u}\cdot\nabla \ka =0\ \ \ \text{in }\Omega.
\end{cases}\ee

As above, the boundary conditions are prescribed as
\be\label{bbc}\begin{cases}
u_i(L_0,x')= \epsilon h_i(x'), \ \ &\forall x'\in \mb{E},\, \ i=2,3,\\
(n_2 u_2+n_3 u_3)(x_1,x')=0, \ \ &\forall (x_1,x')\in \Ga_w:=[L_0,L_1]\times \p \mathbb{E},
\end{cases}\ee
where $h_i\in H^{\frac{7}{2}}(\mb{E}), i=2,3$ satisfy $\p_2 h_3 -\p_3 h_2\in H^3(\mb{E})$ and the following compatibility conditions
\be\label{bcp1}\begin{cases}
h_i(x')=(n_2\p_2+n_3\p_3) h_i(x')=0,\ \ \ \ \forall x'\in \p \mb{E}, i=2,3,\\
(\p_2 h_3-\p_3 h_2)(x')=0,\ \forall x'\in \p \mb{E},\\
(n_2\p_2+n_3\p_3)\p_2 h_2(x')=(n_2\p_2+n_3\p_3)\p_3 h_3(x')=0,\ \forall x'\in \p \mb{E}.
\end{cases}\ee

Then the second main result in this paper is the existence and uniqueness of a smooth transonic Beltrami flow to \eqref{belt}-\eqref{bcp1}.
\begin{theorem}\label{beltrami}
{\it Let $(\bar{\rho},\bar{u})$ be a background transonic flow given in Lemma \ref{1dfpo}. Assume that $\gamma>1$, $\Phi_0\in H^4(\Omega)$ and $h_i\in H^{\frac72}(\mathbb{E}),i=2,3$ satisfy the compatibility conditions \eqref{Phi1} and \eqref{bcp1}, respectively. Then there exist positive
constants $C, \epsilon_0$ depending on the background flow, the potential force and the boundary datum, such that for any $0<\epsilon<\epsilon_0$, the problem \eqref{belt} with \eqref{bbc} has a unique smooth transonic Beltrami flow $(u_1,u_2,u_3)\in H^3(\Omega)$ satisfying the estimate
\be\no
\|u_1-\bar{u}\|_{H^3(\Omega)}+\|u_2\|_{H^3(\Omega)}+\|u_3\|_{H^3(\Omega)}\leq C\epsilon.
\ee
Moreover, all the sonic points form a $C^1$ smooth surface $x_1=\xi(x')\in C^{1}(\ol{\mathbb{E}})$, which is close to the background sonic front $x_1=0$:
\be\no
\|\xi(x')\|_{C^1(\ol{\mb{E}})}\leq C\epsilon.
\ee
}\end{theorem}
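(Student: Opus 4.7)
The plan is to mirror the architecture of Theorem \ref{3dirro} but to run it on an \emph{enlarged deformation-curl system} in order to accommodate the nontrivial vorticity. I would split the argument into three stages: reduction and transport of the proportionality factor $\ka$, solution of a linearized Keldysh-type system via a Picard iteration, and closure of the $H^3$ estimate using the boundary compatibility conditions \eqref{bcp1}.

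First, since the background $\bar{\bf u}=(\bar u,0,0)$ is horizontal and strictly accelerating, for any perturbation ${\bf u}$ with $\|{\bf u}-\bar{\bf u}\|_{H^3}$ small the first component $u_1$ stays positive, so every streamline enters at $x_1=L_0$ and exits at $x_1=L_1$ transversally. The transport equation ${\bf u}\cdot\na\ka=0$ then determines $\ka$ uniquely in $\Om$ from its entrance trace
\be\no
\ka(L_0,x')=\f{({\rm curl}\,{\bf u})\cdot{\bf u}}{H(|{\bf u}|^2,\Phi)\,|{\bf u}|^2}\Big|_{x_1=L_0}=\f{\epsilon\,(\p_2 h_3-\p_3 h_2)(x')}{\bar\rho(L_0)\,\bar u(L_0)}+O(\epsilon^2),
\ee
which is of size $\epsilon$ in $H^3(\mb E)$ by virtue of the assumption $\p_2 h_3-\p_3 h_2\in H^3(\mb E)$ together with \eqref{bcp1}; pulling this trace along the flow yields $\ka\in H^3(\Om)$ with $\|\ka\|_{H^3(\Om)}\le C\epsilon$.

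Second, with $\ka$ in hand, the Beltrami system
\be\no
{\rm div}(H(|{\bf u}|^2,\Phi){\bf u})=0,\q {\rm curl}\,{\bf u}=\ka H(|{\bf u}|^2,\Phi){\bf u}
\ee
is linearized around $\bar{\bf u}$. For ${\bf U}={\bf u}-\bar{\bf u}$, the principal part of the continuity equation is $(c^2(\bar\rho)-\bar u^2)\p_1 U_1+c^2(\bar\rho)(\p_2 U_2+\p_3 U_3)+\text{l.o.t.}$, whose first coefficient changes sign at $x_1=0$, producing exactly the Keldysh degeneracy treated in the proof of Theorem \ref{3dirro}. I would enlarge the first-order system to a second-order one by differentiating the linearized continuity equation in $x_1$ and combining it with appropriate tangential derivatives of the linearized curl equations; the result is a second-order elliptic-hyperbolic mixed system of Keldysh type in ${\bf U}$ with lower-order couplings, the \emph{enlarged deformation-curl system} of the abstract. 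Under the boundary conditions \eqref{bbc}, this enlarged system is solved in $H^4$ by transplanting the multiplier-plus-cut-off scheme of the scalar Keldysh problem to the vector setting. Equivalence with the original first-order Beltrami system is recovered by showing that the auxiliary divergence/curl combinations produced by the enlargement satisfy a homogeneous first-order transport-type system with vanishing entrance and wall data, enforced by \eqref{bcp1}. A Picard iteration alternating the transport solve for $\ka^{(n)}$ along streamlines of ${\bf u}^{(n-1)}$ and the enlarged-system solve for ${\bf u}^{(n)}$ then produces, for $\epsilon$ small, the unique Beltrami flow with $\|{\bf u}-\bar{\bf u}\|_{H^3(\Om)}\le C\epsilon$. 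The $C^1$ sonic surface $x_1=\xi(x')$ with $\|\xi\|_{C^1}\le C\epsilon$ follows from the implicit function theorem applied to $c^2(\rho(x))-u_1^2(x)=0$ at the background sonic front $\{x_1=0\}$, exactly as in Theorem \ref{3dirro}.

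The main obstacle I foresee is the $H^4$ estimate for the enlarged deformation-curl system near the edge $\{L_0\}\times\p\mb E$, where the prescribed entrance flow angles meet the slip condition on $\Ga_w$. The scalar multiplier used for Theorem \ref{3dirro} must be refined so that the extra boundary terms coming from the curl-derived equations also integrate to nonnegative quantities, and this is precisely where the pointwise conditions in \eqref{bcp1} enter: $(\p_2 h_3-\p_3 h_2)|_{\p\mb E}=0$ kills the tangential vorticity at the entrance corner, while $(n_2\p_2+n_3\p_3)\p_j h_j|_{\p\mb E}=0$ for $j=2,3$ kills the remaining normal-tangential combinations. Once this corner-compatible $H^4$ estimate is in place, contraction and passage to the limit close the Picard iteration.
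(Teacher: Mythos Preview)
Your outline misses the central difficulty the paper isolates for the Beltrami case: a half-derivative loss on $\ka$ coming from the trace. Your formula $\ka(L_0,x')=\epsilon(\p_2h_3-\p_3h_2)/(\bar\rho\bar u)(L_0)+O(\eps^2)$ hides that the $O(\eps^2)$ correction contains the trace of the unknown $u_1(L_0,\cdot)$ in the denominator $H(|{\bf u}|^2,\Phi)u_1$. If the iterate ${\bf u}$ lives only in $H^3(\Om)$, that trace is $H^{5/2}(\mb E)$, so $\ka(L_0,\cdot)\in H^{5/2}(\mb E)$ and the transport step returns $\ka\in H^{5/2}(\Om)$, not $H^3$. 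The curl source $\ka H{\bf u}$ is then only $H^{5/2}$, and the iteration does not close in $H^3$. The paper's fix is to run the iteration in the augmented class $\mc S\subset H^3(\Om)\cap H^4(\Om_{1/3})$, where $\Om_{1/3}$ is a subsonic neighborhood of the entrance; there the linearized problem is uniformly elliptic and one gains an extra derivative, so the trace at $x_1=L_0$ is $H^{7/2}$ and $\ka(L_0,\cdot)$ is genuinely $H^3$. This two-norm structure (global $H^3$, local $H^4$ near the inlet) is what makes the contraction close, and it is absent from your proposal.

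Second, your description of the ``enlarged deformation-curl system'' does not match the paper's mechanism. The enlargement is not obtained by differentiating the continuity equation to produce a vector second-order Keldysh system; rather, after linearizing the curl equations around $\hat{\bf v}$, the source $\ka\mc H(\hat{\bf v})\hat{\bf u}$ is no longer divergence-free, so the paper introduces an auxiliary scalar $\Pi$ solving a Poisson problem with mixed Neumann/Dirichlet conditions and adds $\nabla\Pi$ to the curl system. One then solves, in order, (i) the transport equation for $\ka$, (ii) the Poisson problem for $\Pi$, (iii) a standard div-curl system for a rotational part $\dot{\bf v}$ (using the $H^4$ regularity of the appendix), and (iv) a \emph{scalar} Keldysh-type equation for a potential $\psi$ carrying the irrotational remainder, to which the theory built for Theorem \ref{3dirro} applies verbatim. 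Your plan to attack a vector mixed-type system with a refined multiplier would require essentially new estimates; the paper avoids this by the decomposition above. The compatibility conditions \eqref{bcp1} are used exactly where you guess---at the edge $\{L_0\}\times\p\mb E$---but they enter through the $H^4$ theory of the div-curl system and the Poisson problem for $\Pi$, not through boundary terms in a vector multiplier identity.
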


\begin{remark}
{\it To the best of our knowledge, Theorem \ref{3dirro} and \ref{beltrami} are the first results on three dimensional smooth transonic flows for the compressible Euler equations in cylinders with general cross sections. The condition $(\p_2 h_3-\p_3 h_2)(x')=0$ on $\p\mb{E}$ implies that the vorticity of the transonic Beltrami flows on the cylinder wall must be zero. This is required for the $H^4$ regularity of the deformation-curl system at the intersection of the entrance and the cylinder wall. In general, $(\p_2 h_3-\p_3 h_2)(x')$ is not nonzero in $\mb{E}$, thus the transonic Beltrami flows constructed in Theorem \ref{beltrami} have nontrivial vorticity inside the cylinder.
}\end{remark}


\begin{remark}
{\it There are many literatures on the incompressible Beltrami flows \cite{cc15,cw16,cm88,ds13,ep23,na14}. Especially, the incompressible Beltrami flows are used in \cite{ds13} as building blocks to construct continuous solutions to the incompressible Euler equations which dissipate the total kinetic energy. The authors in \cite{myz95} constructed several examples with different symmetries for compressible Beltrami flows. Theorem \ref{beltrami} seems to be the first general existence and uniqueness result for the compressible Beltrami flows.
}\end{remark}

\begin{remark}\label{three}
{\it By the deformation-curl decomposition developed in \cite{wex19,we19}, the system \eqref{3deuler-f} is equivalent to the system
\be\no\begin{cases}
\sum_{i,j=1}^3( c^2(H)\delta_{ij}- u_i u_j) \p_i u_j + \sum_{i=1}^3 u_i \p_i \Phi=0,\\
\p_2 u_3-\p_3 u_2= \omega_1,\\
\p_3 u_1-\p_1 u_3=\omega_2=\frac{u_2\omega_1 + \p_3 B}{u_1},\\
\p_1 u_2-\p_2 u_1=\omega_3(x_1,x')=\frac{u_3\omega_1 - \p_2 B}{u_1},\\
(u_1\p_1+u_2\p_2+u_3\p_3)B=0
\end{cases}\ee
where $\omega_1$ is solved by the transport equation
\be\no
\p_1 \om_1 + \sum_{i=2}^3 \frac{u_i}{u_1}\p_i \om_1 + \om_1 \sum_{i=2}^3 \p_i\bigg(\frac{u_i}{u_1}\bigg)+ \p_2\bigg(\frac{1}{u_1}\bigg)\p_3 B-\p_3\bigg(\frac{1}{u_1}\bigg)\p_2 B=0.
\ee
To look for a solution $({\bf u}, B)$ in $H^3(\Omega)$, as in the two dimensional case \cite{wxy21b,wx23b}, the source terms in the curl system belong to $H^2(\Omega)$, one thus can derive only the $H^2(\Om)$ norm estimate for the velocity due to the type changing of the deformation-curl system in the transonic region. Thus there is a possible loss of derivatives. This is indeed an essential difficulty, which was overcome in the two dimensional case by utilizing the one order higher regularity of the stream function and the fact that the Bernoulli's quantity can be represented as a function of the stream function. However, in the three dimensional case, such a stream function formulation is unavailable in general. It maybe plausible to develop a tame estimate for the linearized mixed type equation and an appropriate version of the Nash-Moser iteration to overcome this difficult issue. This will be investigated in future.

}\end{remark}

As we will show later, the existence and uniqueness of smooth transonic irrotational flows rely on the well-posedness theory of strong solutions to a boundary value problem for a linear second order elliptic-hyperbolic mixed differential equation of Keldysh type with mixed boundary conditions in a cylinder, which takes the form
\be\label{3li}\begin{cases}
k_{11}\p_1^2\psi + (k_{13}+k_{31})\p_{13}^2\psi + \sum_{i,j=2}^3 k_{ij}\p_{ij}^2\psi + \bar{k}_1(x_1)\p_1\psi= F(x_1,x'),\ \ \text{in }\Om,\\
\psi(L_0,x')=0, \ \ \ \text{for all}\ x'\in \mb{E},\\
(n_2\p_2+n_3\p_3)\psi(x_1,x')=0,\ \ \ \text{on }\ \ \Ga_w.
\end{cases}\ee
Here $k_{11}(x_1,x')$ changes its signs when the fluid past the sonic front and the coefficient matrix $(k_{ij})_{i,j=2}^3$ is positive definite uniformly in $\Om$. Compared with the two dimensional and the three dimensional axisymmetric cases in \cite{wx23b,wz24a}, there are some crucial differences in estimating the $L^2$ norm of $\p_1^2\psi$ in the transonic region due to the appearance of an additional term $\sum_{i,j=2}^3 \p_1 k_{ij}\p_{ij}^2\psi$. It seems quite difficult to estimate $\p_1^2\psi$ and $\nabla_{x'}^2\psi$ to \eqref{3li} directly as in \cite{wx23b,wz24a}. Our main strategy here is to extend the problem \eqref{3li} along the positive $x_1$ direction to an auxiliary linear second order elliptic-hyperbolic-elliptic mixed problem in a longer cylinder where the governing equation becomes elliptic at the exit of the new cylinder. For the latter case, it is plausible to utilize the multiplier method and a cut-off technique to deal with the trouble term $\sum_{i,j=2}^3 \p_1 k_{ij}\p_{ij}^2\psi$ at the expense of one order more regularity requirement on the coefficients in comparison with the 2-D case (see more explanations in \eqref{W105}). Such an extension technique had been outlined without a detailed construction in \cite{ku02} for the stability analysis of the von Karman equation in a two dimensional duct. The $H^3$ and $H^4$ estimates are also obtained with similar costs on the coefficients. The separation of variables and the Banach contraction principle are then used to prove the $H^4$ estimate near the intersection between the entrance and the wall of the cylinder. We then show that the energy estimates can be closed in the $H^4$ framework in the 3-D case.

For a smooth transonic Beltrami flow, the vorticity field is parallel to the velocity field whose proportionality factor $\kappa$ is conserved along stream lines. Then one has to solve a problem consisting of a transport equation for $\kappa$ and an enlarged deformation-curl system, where we introduce a new unknown function to overcome the difficulty that the source term in the curl system is not divergence free in general after the linearization. Though there is no loss of derivatives in the form for the linear stability analysis for the Beltrami flows, compared with general three dimensional transonic rotational flows discussed in Remark \ref{three}, however, since the boundary data for $\kappa$ at the entrance should be obtained by the tangential velocity in \eqref{bbc}, there is a loss of $\frac12$ derivatives on the boundary conditions due to the trace theorem in Sobolev spaces (see the equation \eqref{ka21}). We overcome this difficulty by using the higher regularity of the velocity field in the subsonic region. It should be noted that the compatibility conditions and the higher order regularity of the solutions to the Poisson equation and the deformation-curl system with mixed boundary conditions in cylinders play crucial roles in our analysis here, which need to be checked carefully.

The rest of this paper is organized as follows. We linearize the transonic irrotational flow problem \eqref{po1} with \eqref{fbcs} in \S\ref{formulation} and prove the existence and uniqueness of the $H^2$ strong solution to the linearized problem in \S\ref{strongH2}. The $H^4$ estimate and the proof of Theorem \ref{3dirro} for the transonic irrotational flows are given in \S\ref{high}. In \S\ref{belflow}, we reformulate the transonic Beltrami flow problem and give the proof of Theorem \ref{beltrami}. Finally, the $H^4$ regularity of the solution to the divergence-curl system with homogeneous normal boundary conditions is shown in the Appendix \S\ref{div-curl}.

\section{The reformulation of the transonic irrotational flow problem}\label{formulation}

Let $\bar{\varphi}$ be a potential of the background flow and $\varphi$ be a solution to \eqref{po1} subject to the boundary condition \eqref{fbcs}. Set $\psi_1 = \varphi - \bar{\varphi}$. Then
\be\no\begin{cases}
\sum_{i,j=1}^3 k_{ij} (\na \psi_1 ) \p_{ij}^2 \psi_1 + \bar{k}_{1}(x_1)\p_{1}\psi_1 = F_0(\na \psi_1), \\
  \p_j\psi_1 (L_0, x') = \eps \p_j h_0 (x'), \q \forall x' \in \mathbb{E}, \q j=2,3,\q \psi_1(L_0, 0) = 0, \\
  (n_2\p_2+n_3\p_3)\psi_1 (x_1,x')=0, \q \forall (x_1,x') \in \Ga_w,
\end{cases}
\ee
where
\be\label{coe}
\begin{cases}
  k_{11} (\na \psi_1 ) = 1-\bar{M}^2+\frac1 {c^2 (\bar{\rho})} \bigg((\gamma-1)\epsilon \Phi_0- (\gamma+1)\bar{u}\p_{1}\psi_1\\
  \q\q\q-(\p_1\psi_1)^2-\frac{\gamma-1}{2}|\nabla\psi_1|^2\bigg), \ \ \bar{M}^2(x_1)= \frac {c^2(\bar{\rho }) - \bar{u}^2}{c^2 (\bar{\rho })}, \\
  k_{ii} (\na \psi_1 ) = 1+ \f{\gamma-1}{c^2 (\bar{\rho})} \bigg(\epsilon \Phi_0- \bar{u}\p_1\psi_1-\frac{1}{2}|\nabla \psi_1|^2\bigg) - \frac{(\p_i \psi_1 )^2}{c^2(\bar{\rho})},\ i=2,3,\\
  k_{1i} (\na \psi_1 ) = k_{i1}(\nabla\psi_1)=-\frac{(\bar{u}+\p_1\psi_1) \p_i \psi_1}{c^2(\bar{\rho})},\ \ \ i=2,3,,\\
  k_{23} (\na \psi_1 ) = k_{32}(\nabla \psi_1)=-\frac {\p_2 \psi_1 \p_3\psi_1}{c^2 (\bar{\rho})},\ \ \bar{k}_1(x_1)= \frac{\bar{f} - (\ga +1) \bar{u} \bar{u}'}{c^2 (\bar{\rho})}= \f {\bar{f} (c^2 (\bar{\rho }) + \ga \bar{u}^2)}{c^2 (\bar{\rho }) (c^2 (\bar{\rho }) - \bar{u}^2)},  \\
  F_0(\na \psi_1)=- \frac{\bar{u}'}{c^2 (\bar{\rho})}\bigg(
(\gamma -1) \epsilon\Phi_0- \frac{\gamma +1}2 (\p_{1} \psi_1)^2 - \frac {\gamma-1}{2} |\nabla'\psi_1|^2\bigg)\\
\q\q\q\q\q\q-\frac{\epsilon}{c^2 (\bar{\rho})}\bigg(\bar{u}\p_1\Phi_0+\sum_{j=1}^3 \p_j\Phi_0\p_j \psi_1\bigg),\ \ \ \nabla'=(\p_2,\p_3),\\
c^2(\rho) =c^2(\bar{\rho})+ (\gamma -1) (\epsilon \Phi_0- \bar{u}\p_1\psi_1 -\frac12 |\nabla \psi_1|^2).
\end{cases}
\ee

Choose a monotonic decreasing function $\eta_0 (x_1) \in C^{\oo} ([L_0, L_1])$ satisfying
\be\label{eta}
\eta_0 (x_1) = \begin{cases}
    1, & L_0 \le x_1 \le \f {19}{20} L_0 \\
    0, & \f9{10} L_0 \le x_1 \le L_1.
    \end{cases}
\ee
Set $\psi (x_1,x') = \psi_1 (x_1 ,x') - \epsilon\psi_0(x_1,x')$, where $ \psi_0(x_1,x')=\eta_0 (x_1)h_0 (x')$. Then $\psi$ satisfies
\be\label{li2}
\begin{cases}
  \sum_{i,j=1}^3 k_{ij} (\nabla \psi+\epsilon\nabla \psi_0) \p_{ij}^2\psi + \bar{k}_1(x_1) \p_1\psi= F(\na \psi), & (x_1, x') \in \Omega, \\
  \psi(L_0, x') =0, & \forall x' \in \mathbb{E}, \\
  (n_2\p_2 +n_3\p_3)\psi(x_1,x')= 0, & \forall (x_1,x') \in \Ga_w,
\end{cases}
\ee
where
\be\label{f0}
F(\nabla\psi)= F_0(\nabla\psi+\epsilon\nabla \psi_0)-\displaystyle \epsilon\sum_{i,j=1}^3 k_{ij}(\nabla\psi+\epsilon\nabla \psi_0)\p_{ij}^2 \psi_0- \epsilon\bar{k}_1(x_1) \p_1 \psi_0.
\ee

Set the function space $\Xi_{\de_0}$ to be the set of the functions $\psi \in H^4(\Omega)$ satisfying the estimate $\|\psi\|_{H^4(\Omega)} \leq \delta_0$ with $\de_0 > 0$ to be specified later and the compatibility conditions
\be\label{cp00}\begin{cases}
\psi (L_0, x') = 0, \ \ \forall x' \in \mb{E}, \\
(n_2\p_2 + n_3 \p_3) \psi(x_1,x')=0,\ \forall (x_1,x')\in\Ga_w.
\end{cases}\ee

For any given $\hat{\psi} \in \Xi_{\de_0}$, one can define an operator $\mc{T}$, mapping $\hat{\psi} \in \Xi_{\de_0}$ to $ \psi \in \Xi_{\de_0} $, where $\psi$ is obtained by solving the following boundary value problem for a linearized mixed type equations
\be\label{li0}
\begin{cases}
\sum_{i,j=1}^3 k_{ij} (\nabla \hat{\psi}+\epsilon\nabla \psi_0) \p_{ij}^2\psi + \bar{k}_1(x_1) \p_1\psi= F(\na \hat{\psi }), & (x_1, x') \in \Omega,\\
\psi(L_0, x') =0, & \forall x' \in \mathbb{E}, \\
(n_2\p_2 +n_3\p_3)\psi(x_1,x')= 0, & \forall (x_1,x') \in \Ga_w.
\end{cases}
\ee
It will be shown in the next section that $\mc{T}$ is well-defined. As a preparation, we list some properties of the background solution and the coefficients $k_{ij}$ to be used later. Define $\bar{k}_{11}(x_1)=1-\bar{M}^2(x_1)$ and $\bar{k}_1(x_1)$ as in \eqref{coe}.

\begin{lemma}[Lemma 1.3 in \cite{wz24a}]\label{bkg-coe}
  For the background transonic flows in Lemma \ref{1dfpo}, there exists a constant $\kappa_* > 0$, such that
  \be\label{6}
  2 \bar{k}_1 (x_1) + (2j -1) \bar{k}_{11}^{\prime} (x_1) \le - \kappa_*, \, j = 0, 1, 2,3,\,\, \forall x_1 \in [L_0, L_1].
  \ee
  Thus there also exists another positive number $d_0 >0$, such that $d(x_1) = 6 (x_1 - d_0) < 0$ and
  \be\label{7}
  (\bar{k}_1 + j \bar{k}_{11}^{\prime}) d - \f 12 (\bar{k}_{11}d)^{\prime} \ge 4, \, j = 0, 1, 2, 3,\,\, \forall x_1 \in [L_0, L_1].
  \ee
\end{lemma}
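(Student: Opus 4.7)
The plan is to exploit the momentum ODE $(c^2(\bar\rho)-\bar u^2)\bar u' = -\bar u\bar f$ satisfied by the background flow (coming from \eqref{1df}) to rewrite both $\bar k_1$ and $\bar k_{11}'$ as smooth, explicit expressions on the whole interval $[L_0,L_1]$, including at the sonic point $x_1=0$ where the formula for $\bar k_1$ in \eqref{coe} appears as $0/0$.

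First, I would substitute $\bar f = -(c^2-\bar u^2)\bar u'/\bar u$ directly into the definition of $\bar k_1$ in \eqref{coe}; the factor $c^2-\bar u^2$ cancels and yields
$$
\bar k_1(x_1) = -\frac{\bar u'(c^2+\gamma\bar u^2)}{\bar u\, c^2}.
$$
Differentiating $c^2 = (\gamma-1)(B_0+\bar\Phi-\tfrac12\bar u^2)$ and again using the ODE to eliminate $\bar f$ gives $(c^2)' = -(\gamma-1)\bar u' c^2/\bar u$, from which a short computation produces
$$
\bar k_{11}'(x_1) = -\frac{(\gamma+1)\bar u\,\bar u'}{c^2}.
$$
Both formulas involve only strictly positive, smooth quantities on $[L_0,L_1]$: positivity of $\bar u'$ across the sonic point follows from the accelerating nature of the background flow in Lemma \ref{1dfpo} together with the identity $(\bar u'(0))^2=\bar f'(0)/(\gamma+1)>0$ obtained by differentiating the ODE at the origin under the assumption \eqref{1df90}.

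Combining the two formulas gives the key identity
$$
2\bar k_1 + (2j-1)\bar k_{11}' = -\frac{\bar u'}{\bar u\, c^2}\Bigl[\,2c^2 + \bigl(2\gamma + (2j-1)(\gamma+1)\bigr)\bar u^2\Bigr].
$$
The coefficient of $\bar u^2$ equals $\gamma-1>0$ for $j=0$ and is manifestly positive for $j=1,2,3$, so the bracketed quantity is strictly positive on $[L_0,L_1]$. The whole right-hand side is therefore continuous and strictly negative on a compact interval, and a compactness argument yields a single $\kappa_*>0$ for which \eqref{6} holds simultaneously for all $j\in\{0,1,2,3\}$.

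For the second assertion, using $d'=6$ I would rewrite
$$
(\bar k_1+j\bar k_{11}')d - \tfrac12(\bar k_{11}d)' = \frac{d}{2}\bigl(2\bar k_1+(2j-1)\bar k_{11}'\bigr) - 3\bar k_{11}.
$$
Since $d<0$ and the first part gives $2\bar k_1+(2j-1)\bar k_{11}'\le -\kappa_*$, the first term on the right is bounded below by $3(d_0-L_1)\kappa_*$; choosing $d_0$ sufficiently large makes this exceed $4+3\|\bar k_{11}\|_{L^\infty([L_0,L_1])}$, yielding \eqref{7}. The only mildly delicate point in the whole argument is handling the removable singularity of $\bar k_1$ at the sonic point; once the ODE cancellation is carried out, everything reduces to an algebraic identity plus a compactness argument.
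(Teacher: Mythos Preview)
Your argument is correct. The paper does not give its own proof of this lemma; it is quoted as Lemma~1.3 from \cite{wz24a}, so there is nothing to compare against here. Your derivation---using the background momentum relation $(c^2(\bar\rho)-\bar u^2)\bar u'=-\bar u\bar f$ to cancel the apparent singularity in $\bar k_1$ at the sonic point, computing $\bar k_1=-\bar u'(c^2+\gamma\bar u^2)/(\bar u c^2)$ and $\bar k_{11}'=-(\gamma+1)\bar u\bar u'/c^2$, and then reducing \eqref{6} to the manifest positivity of $2c^2+[2\gamma+(2j-1)(\gamma+1)]\bar u^2$---is exactly the natural route and each step checks out. One small remark: the identity $(\bar u'(0))^2=\bar f'(0)/(\gamma+1)$ only gives $\bar u'(0)\neq 0$, but since $\bar u'>0$ on $[L_0,0)$ and $\bar u\in C^\infty$, continuity forces $\bar u'(0)>0$; you implicitly use this and it is worth stating. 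The reduction of \eqref{7} to $\tfrac{d}{2}\bigl(2\bar k_1+(2j-1)\bar k_{11}'\bigr)-3\bar k_{11}$ and the subsequent choice of $d_0$ are also fine.
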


For the coefficients $k_{ij}(\nabla \hat{\psi}+\epsilon\nabla \psi_0)$ defined in \eqref{coe}, the following properties hold.
\begin{lemma}\label{coe1}
For any $\psi\in \Xi_{\delta_0}$, $k_{ij}(\nabla \psi+\epsilon\nabla \psi_0),i,j=1,2,3$ satisfy the properties
\be\label{coe100}
&&\sum_{j=1}^3\|k_{1j}(\nabla(\psi+\epsilon\psi_0))-\bar{k}_{11}\delta_{1j}\|_{H^3(\Om)}\leq C_*(\epsilon+\delta_0),\\\label{coe101}
&&\sum_{i,j=2}^3 \|k_{ij}(\nabla(\psi+\epsilon\psi_0))-\delta_{ij}\|_{H^3(\Om)}\leq C_*(\epsilon+\delta_0),\\\label{coe102}
&&\sum_{i,j=2}^3 k_{ij}(\nabla(\psi+\epsilon\psi_0))\xi_i \xi_j \geq \frac12 (\xi_2^2+\xi_3^2),\  \forall (\xi_2,\xi_3)\in \mbR^2,\\\label{coe103}
&&\displaystyle\sum_{i=2}^3 n_i k_{1i}(\nabla(\psi+\epsilon\psi_0))(x_1,x')=0,\ \ \forall (x_1,x')\in \Ga_w.
\ee
For any $\psi_{i}\in C^4(\ol{\Om})$ satisfying the boundary condition $(n_2\p_2+n_3\p_3)\psi_i=0$ on $\Ga_w$ for $i=1,2$, then it holds that
\be\label{coe104}
\displaystyle\sum_{i,j=2}^3 \p_1^m(k_{ij}(\nabla\psi_1))\p_j\psi_2 n_i=0,\ \  &\forall (x_1,x')\in \Ga_w,\ \ m=0,1,2,3.
\ee
\end{lemma}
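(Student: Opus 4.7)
The plan is to treat the five assertions separately; \eqref{coe100}--\eqref{coe103} are essentially bookkeeping, while the induction behind \eqref{coe104} is the only structural point requiring care. For \eqref{coe100} and \eqref{coe101} I would read off the explicit formulas in \eqref{coe} and exploit the Banach algebra property of $H^3(\Om)$, valid since $\Om\subset\mbR^3$ and $3>3/2$. Each entry $k_{1j}(\nabla(\psi+\epsilon\psi_0))-\bar{k}_{11}\delta_{1j}$ and $k_{ij}(\nabla(\psi+\epsilon\psi_0))-\delta_{ij}$ for $i,j\in\{2,3\}$ is a polynomial in the components of $\nabla(\psi+\epsilon\psi_0)$ and $\epsilon\Phi_0$ with smooth $x_1$-dependent coefficients constructed from $c^2(\bar{\rho})$ and $\bar{u}$, vanishing when one sets $\nabla\psi=0$ and $\epsilon=0$. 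Combined with $\|\nabla\psi\|_{H^3}\le C\delta_0$ and $\|\epsilon\nabla\psi_0\|_{H^3}+\epsilon\|\Phi_0\|_{H^3}\le C\epsilon$, the algebra property yields the bound $C_*(\epsilon+\delta_0)$. The coercivity \eqref{coe102} then follows from the Sobolev embedding $H^3(\Om)\hookrightarrow L^\infty(\Om)$: $|k_{ii}-1|$ and $|k_{23}|$ are uniformly $O(\epsilon+\delta_0)$ on $\ol{\Om}$, so shrinking $\epsilon+\delta_0$ places the $2\times 2$ block within $\tfrac14 I_2$ in operator norm.

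The boundary identity \eqref{coe103} is a direct computation using the explicit formula for $k_{1i}$, which factors as
\[
\sum_{i=2}^3 n_i k_{1i}(\nabla(\psi+\epsilon\psi_0))=-\frac{\bar{u}+\p_1(\psi+\epsilon\psi_0)}{c^2(\bar{\rho})}\sum_{i=2}^3 n_i\,\p_i(\psi+\epsilon\psi_0).
\]
The second factor vanishes on $\Ga_w$ because $(n_2\p_2+n_3\p_3)\psi=0$ there by \eqref{cp00} and $(n_2\p_2+n_3\p_3)\psi_0=\eta_0(x_1)(n_2\p_2+n_3\p_3)h_0=0$ by the compatibility condition \eqref{fcp1}.

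The real content lies in \eqref{coe104}, which I plan to prove by induction on $m$. The decisive structural observation is that $\Ga_w=[L_0,L_1]\times\p\mb{E}$ is cylindrical, so the outer normal $n=(0,n_2(x'),n_3(x'))$ is independent of $x_1$ and $\p_1$ is tangential to $\Ga_w$; in particular the slip boundary condition on $\psi_1,\psi_2$ may be differentiated in $x_1$ arbitrarily many times. Writing the $2\times 2$ block as
\[
(k_{ij}(\nabla\psi_1))_{i,j=2,3}=\alpha(x,\nabla\psi_1)I_2+\beta(x_1)\,\nabla'\psi_1\otimes\nabla'\psi_1,\qquad \beta=-\tfrac{1}{c^2(\bar{\rho})},
\]
the base case $m=0$ reduces to
\[
\mathcal{L}(\psi_1,\psi_2):=\sum_{i,j=2}^3 k_{ij}(\nabla\psi_1)\p_j\psi_2\,n_i=\alpha\,(n\cdot\nabla'\psi_2)+\beta\,(n\cdot\nabla'\psi_1)(\nabla'\psi_1\cdot\nabla'\psi_2),
\]
which vanishes on $\Ga_w$ by the slip conditions on both functions. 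Since $\mathcal{L}(\psi_1,\psi_2)\equiv 0$ on $\Ga_w$ as a function of $x_1$, we have $\p_1^m\mathcal{L}=0$ there as well; expanding by Leibniz and using $\p_1 n=0$ gives
\[
0=\p_1^m\mathcal{L}=\sum_{l=0}^m\binom{m}{l}\sum_{i,j=2}^3(\p_1^l k_{ij}(\nabla\psi_1))\,\p_j(\p_1^{m-l}\psi_2)\,n_i.
\]
Each tangential derivative $\p_1^{m-l}\psi_2$ again satisfies the slip condition on $\Ga_w$, so by the inductive hypothesis --- which must be formulated for every admissible $\psi_2$, not just the one fixed at the outset --- every summand with $l<m$ vanishes, isolating the desired $l=m$ term. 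The main obstacle, as usual in arguments of this flavor, is stating the induction hypothesis broadly enough that it can be re-applied to the tangential derivatives $\p_1^{m-l}\psi_2$; once this is recognized, the whole argument is short.
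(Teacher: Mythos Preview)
Your proposal is correct and follows essentially the same approach as the paper: the algebra property of $H^3(\Om)$ for \eqref{coe100}--\eqref{coe101}, the resulting $L^\infty$ smallness for \eqref{coe102}, and the slip condition together with the explicit form of $k_{1i}$ for \eqref{coe103}. For \eqref{coe104} the paper writes out the cases $m=0,1$ explicitly and states that $m=2,3$ follow by the same direct computation, whereas you package the same observation---that $\p_1$ is tangential to $\Ga_w$ so $\p_1^k\psi_2$ inherits the slip condition---as a short induction; the two arguments are equivalent in content, your formulation being somewhat cleaner.
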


\begin{proof}
The estimates \eqref{coe100} and \eqref{coe101} follow simply from the definition in \eqref{coe} and the fact that the Sobolev space $H^3(\Om)$ is a Bananch algebra. The positivity of the matrix $(k_{ij})_{i,j=2}^3$ is a consequence of \eqref{coe101}. The conditions \eqref{fcp1} and \eqref{cp00} imply \eqref{coe103}. It remains to prove \eqref{coe104}. For brevity, denote
\be\no
b(\nabla \psi_1)= \f{\gamma-1}{c^2 (\bar{\rho})} \bigg(\epsilon \Phi_0- \bar{u}\p_1\psi_1-\frac{1}{2}|\nabla \psi_1|^2\bigg), \ \ q(x_1)=\frac{1}{c^2(\bar{\rho}(x_1))}.
\ee
Then
\be\no
&&k_{ii}(\nabla\psi_1)=1+ b(\nabla\psi_1)-q(x_1)(\p_i \psi_1)^2,\ \  i=2,3,\\\no
&&k_{23}(\nabla\psi_1)=k_{32}(\nabla\psi_1)=-q(x_1)\p_2 \psi_1 \p_3 \psi_1.
\ee
Denote $\p_{{\bf n}}= n_2\p_2 + n_3\p_3$, then simply calculations show that
\be\no
&&\sum_{i,j=2}^3 k_{ij}(\nabla \psi_1)\p_j\psi_2 n_i=(1+b(\nabla\psi_1))\p_{{\bf n}}\psi_2 - q(x_1)\p_{{\bf n}}\psi_1 \sum_{i=2}^3 \p_i\psi_1\p_i\psi_2=0,\\\no
&&\sum_{i,j=2}^3 \p_1 k_{ij}(\nabla \psi_1)\p_j\psi_2 n_i= \p_1(b(\nabla\psi_1))\p_{{\bf n}}\psi_2 - q'(x_1)\p_{{\bf n}}\psi_1 \sum_{i=2}^3\p_i\psi_1\p_i\psi_2 \\\no
&&\q\q\q- q(x_1) \bigg(\p_{{\bf n}}(\p_{1}\psi_1)\sum_{i=2}^3\p_i\psi_1\p_i\psi_2+ \p_{{\bf n}}\psi_1 \sum_{i=2}^3\p_{1i}^2\psi_1\p_i\psi_2\bigg)=0,
\ee
which verify \eqref{coe104} for $m=0,1$. The other two cases can be proved similarly by direct computations.

\end{proof}

\section{The $H^2$ strong solution to the linearized problem \eqref{li0}}\label{strongH2} \noindent

For simplicity, we assume that $h_0\in C^5(\ol{\mb{E}})$. The function $\hat{\psi}\in \Xi_{\de_0}$ can be approximated by a sequence of smooth functions $\{\psi^m\}_{m=1}^{\infty}\subset C^5(\ol{\Omega})\cap \Xi_{\de_0}$ in the $H^4(\Omega)$ norm, so that the coefficients $k_{ij}(\nabla \psi^m+\epsilon\nabla \psi_0)\in C^4(\ol{\Omega})$ for $i,j=1,2,3$ and $m\geq 1$ satisfy the estimates and the compatibility conditions listed in Lemma \ref{coe1}. In the following sections, we consider the following boundary value problem for a linear second order mixed type differential equation
\be\label{li1}
\begin{cases}
  \mc{L} \psi \co \displaystyle\sum_{i,j=1}^3 k_{ij}(x_1,x') \p_{ij}^2\psi + \bar{k}_1(x_1) \p_1\psi= F(x_1,x'), & (x_1, x') \in \Omega, \\
  \psi(L_0, x') =0, & \forall x' \in \mathbb{E}, \\
  (n_2\p_2 +n_3\p_3)\psi(x_1,x')= 0, & \forall (x_1,x') \in \Ga_w,
\end{cases}
\ee
where the coefficients $k_{ij}, i,j=1,2,3$ are assumed to belong to $C^4(\ol{\Omega})$ and satisfy the estimates and the compatibility conditions listed in Lemma \ref{coe1}. The function $F$ is assumed to belong to $C^3(\ol{\Omega})$.

In this section, we first derive the basic $H^1(\Om)$ estimate for \eqref{li1} using the famous ``ABC" method developed by Friedrichs\cite{fri58}, thus the uniqueness of the $H^2(\Om)$ strong solution to \eqref{li1} follows immediately. However, when estimating the $L^2$ norm of $\p_1^2\psi$, there is a trouble term $\sum_{i,j=2}^3\p_1 k_{ij}\p_{ij}^2 \psi$ which is difficult to deal with. Moreover, it seems impossible to derive the $L^2(\Om)$ norm of $\nabla_{x'}^2\psi$ to \eqref{li1} directly as in the 2-D and 3-D axisymmetric cases \cite{wx23b,wz24a}. To obtain the existence of the $H^2(\Om)$ strong solution to \eqref{li1}, our key strategy here is to extend the problem \eqref{li1} along the positive $x_1$ direction to an auxiliary elliptic-hyperbolic-elliptic mixed problem in a longer cylinder where the governing equation becomes elliptic near the exit of the extended cylinder. For the latter case, it is plausible to utilize the multiplier method, the cut-off technique and the integration by parts to handle the difficult term $\sum_{i,j=2}^3 \p_1 k_{ij}\p_{ij}^2\psi$ at the expense of one order more regularity requirement on the coefficients in comparison with the 2D case (see more explanations in \eqref{W105}).

\subsection{The uniqueness of the $H^2$ strong solution to \eqref{li1}}\label{se31}

The following Lemma gives the $H^1(\Omega)$ energy estimate for any classical solution to \eqref{li1}. The properties of the background flow given in Lemma \ref{bkg-coe} play a crucial role here.

\begin{lemma}\label{H1e}
There exists positive constants $\epsilon_*$ and $\de_*$ depending only on the background flow, the potential force and the boundary data, such that if $0<\epsilon<\epsilon_*$ and $0 < \de_0 \le \de_*$ in Lemma \ref{coe1}, any classical solution to \eqref{li1} satisfies
  \be\label{H10}
\iint_{\mb{E}}|\p_1\psi(L_0,x')|^2+|\nabla \psi(L_1,x')|^2dx'+\|\psi\|_{H^1(\Om)}^2\leq C_* \|F\|_{L^2(\Om)}^2,
\ee
where $C_*$ depends only on the $H^3(\Omega)$ norms of $k_{ij}, i,j=1,2,3$ and $\bar{k}_1$.
\end{lemma}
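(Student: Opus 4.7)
The plan is to apply the classical ``ABC'' multiplier method of Friedrichs with the multiplier $d(x_1)\p_1\psi$, where $d(x_1)=6(x_1-d_0)$ is the function produced in Lemma \ref{bkg-coe}. After multiplying $\mc{L}\psi=F$ by $d\,\p_1\psi$ and integrating over $\Om$, the goal is to redistribute all second-order derivatives via integration by parts, to use the boundary conditions at $x_1=L_0$ and on $\Ga_w$, and to exploit that all boundary contributions at $x_1=L_1$ will turn out to have favorable signs (so no boundary condition is needed there). One then invokes the structural inequalities of Lemma \ref{bkg-coe} together with the perturbative smallness from Lemma \ref{coe1} to extract the $H^1$ bound.

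For the pure $\p_1^2$ contribution, the identity $k_{11}d\,\p_1^2\psi\,\p_1\psi=\f{1}{2}k_{11}d\,\p_1[(\p_1\psi)^2]$ yields, after integration by parts in $x_1$, the interior term $-\f{1}{2}\int_\Om(k_{11}d)'(\p_1\psi)^2\,dx$ plus two boundary pieces at $x_1=L_0,L_1$. Since $d<0$ on $[L_0,L_1]$, while $k_{11}(L_0,x')>0$ (subsonic entry) and $k_{11}(L_1,x')<0$ (supersonic exit), both boundary pieces are nonnegative and produce exactly $\iint_{\mb{E}}|\p_1\psi(L_0,x')|^2\,dx'$ and $\iint_{\mb{E}}|\p_1\psi(L_1,x')|^2\,dx'$ with positive weights. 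The cross terms $2k_{1i}\p_{1i}^2\psi\cdot d\,\p_1\psi$, $i=2,3$, rewrite as $k_{1i}d\,\p_i[(\p_1\psi)^2]$; integration by parts in $x_i$ generates a $\Ga_w$ boundary integral which vanishes by \eqref{coe103}, while the interior remainder $-\int_\Om \p_i k_{1i}\,d\,(\p_1\psi)^2\,dx$ is of size $O(\epsilon+\de_0)$ by \eqref{coe100}. For the tangential block $\sum_{i,j=2}^3 k_{ij}\p_{ij}^2\psi\cdot d\,\p_1\psi$, integration by parts in $x_i$ first kills the $\Ga_w$ integral by \eqref{coe104}; then, using the symmetry $k_{ij}=k_{ji}$, the remaining interior contribution is recast as
\[
-\f{1}{2}\int_\Om d\,\p_1V\,dx+\f{1}{2}\int_\Om d\sum_{i,j=2}^3 \p_1 k_{ij}\,\p_i\psi\,\p_j\psi\,dx,\qquad V\co \sum_{i,j=2}^3 k_{ij}\,\p_i\psi\,\p_j\psi,
\]
and a final integration by parts in $x_1$ supplies the positive interior term $\f{1}{2}\int_\Om d'\,V\,dx$ together with a favorable boundary piece $-\f{1}{2}d(L_1)\iint_{\mb{E}}V(L_1,x')\,dx'$ which, combined with the boundary contribution already accumulated from $k_{11}$, yields the full $\iint_{\mb{E}}|\na\psi(L_1,x')|^2\,dx'$; note $V(L_0,x')=0$ since $\psi(L_0,x')\equiv 0$ forces $\p_i\psi(L_0,x')=0$ for $i=2,3$.

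Collecting all contributions, the dominant interior quadratic form reads
\[
\int_\Om\bigl[\bar{k}_1 d-\tfrac{1}{2}(\bar{k}_{11}d)'\bigr](\p_1\psi)^2\,dx+\tfrac{1}{2}\int_\Om d'\,V\,dx + \text{(perturbative remainders)},
\]
which by Lemma \ref{bkg-coe} (with $j=0$) and $d'\equiv 6$ is bounded below by $4\int_\Om(\p_1\psi)^2\,dx+3\int_\Om(|\p_2\psi|^2+|\p_3\psi|^2)\,dx$, modulo errors of size $O(\epsilon+\de_0)\|\na\psi\|_{L^2(\Om)}^2$ produced by the differences $k_{11}-\bar{k}_{11}$, $k_{ii}-1$, $k_{1i}$, $k_{23}$ and the first-order derivatives $\p_1 k_{ij}$, $\p_i k_{1i}$. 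These remainders are controlled by \eqref{coe100}--\eqref{coe101} and the Sobolev embedding $H^3(\Om)\hookrightarrow C^1(\ol{\Om})$ (which is where the $H^3$ dependence of $C_*$ enters). The right-hand side $\int_\Om F\cdot d\,\p_1\psi\,dx$ is handled by Cauchy--Schwarz and Young's inequality. Choosing $\epsilon_*$ and $\de_*$ small enough to absorb all perturbative errors into the positive interior form and the two favorable boundary integrals at $x_1=L_0,L_1$, and then applying Poincar\'e's inequality to the Dirichlet trace $\psi(L_0,\cdot)=0$ to upgrade the gradient bound to $\|\psi\|_{H^1(\Om)}^2$, delivers the estimate \eqref{H10}.

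The main technical care, rather than a genuine conceptual obstacle, lies in the tangential block: recasting $\sum_{i,j=2}^3 k_{ij}\p_{ij}^2\psi\cdot d\,\p_1\psi$ in a form whose $\Ga_w$ boundary integral vanishes relies on the symmetric combination together with the precise identity \eqref{coe104}, and tracking the $\p_1 k_{ij}$ remainder that appears upon trading $d\,\p_1 V$ for $d'V$ is exactly what forces the $H^3$ regularity of the coefficients into $C_*$. All remaining steps are direct applications of Lemma \ref{bkg-coe}, Lemma \ref{coe1}, Poincar\'e's inequality, and Young's inequality.
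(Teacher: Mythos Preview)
Your proposal is correct and follows essentially the same approach as the paper: multiply by $d(x_1)\p_1\psi$, integrate by parts using \eqref{coe103}--\eqref{coe104} to kill the $\Ga_w$ boundary terms, invoke \eqref{7} with $j=0$ for the $(\p_1\psi)^2$ coefficient and the positivity of $d'$ together with \eqref{coe102} for the tangential part, then finish with Young and Poincar\'e. The only minor slips are a numerical factor (your $3$ for the tangential block should be $\tfrac32$ after applying \eqref{coe102}) and an unlisted perturbative remainder $-\int_\Om d\,\p_1\psi\sum_{i,j=2}^3\p_i k_{ij}\,\p_j\psi\,dx$ arising from the first integration by parts in $x_i$; both are harmless and already absorbed by your smallness argument.
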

\begin{proof}
Let $d (x_1) = 6 (x_1 - d_0) < 0$ for $x_1 \in [L_0, L_1]$ as in Lemma \ref{bkg-coe}. Multiplying the first equation in \eqref{li1} by $d (x_1) \p_{x_1} \psi $ and integration by parts in $ D $, one can get from \eqref{coe103} that
\be\no
&& \iiint_\Omega d(x_1) \p_{1} \psi F  dx' dx_1 = \iint_{\mb{E}}\bigg(\frac12 d k_{11} (\p_{1} \psi)^2 - \frac{d}{2} \sum_{i,j=2}^3 k_{ij}\p_i\psi \p_j\psi \bigg)\bigg|_{x_1 = L_0}^{L_1} dx'\\\label{4}
&&\q\q+\iiint_{\Omega} \bigg(d \bar{k}_1 - \f 12  \p_{1} (d k_{11})-d\sum_{i=2}^3\p_i k_{1i}\bigg)(\p_{1} \psi)^2 dx' dx_1\\\no
&&\q\q +\iiint_{\Om}\sum_{i,j=2}^3\bigg(\frac{1}{2} d' k_{ij}\p_i \psi\p_j\psi- d\p_1\psi\p_i k_{ij}\p_j\psi  +  \frac{d}{2}\p_1 k_{ij}\p_i\psi \p_j \psi\bigg) dx' dx_1,
\ee
where one has used \eqref{coe104} for $m=0$ so that
\be\no
&&\iiint_{\Omega} d\p_1 \psi\sum_{i,j=2}^3 k_{ij}\p_{ij}^2 \psi =-\iiint_{\Omega} \sum_{i,j=2}^3 d k_{ij}\p_{1i}^2 \psi \p_j\psi + d\p_1 \psi  \p_i k_{ij} \p_j\psi\\\no
&&=-\iiint_{\Omega}\sum_{i,j=2}^3 \frac{d}{2}\bigg(\p_1(k_{ij}\p_i\psi \p_j\psi)- \p_1 k_{ij}\p_i\psi \p_j\psi\bigg) + d\p_1\psi \p_i k_{ij}\p_j\psi dx' dx_1\\\no
&&=-\iint_{\mb{E}} \frac{d}{2} \sum_{i,j=2}^3 k_{ij}\p_i\psi\p_j\psi\b|_{x_1=L_0}^{L_1} dx'+\iiint_{\Omega} \frac12 d'\sum_{i,j=2}^3 k_{ij}\p_{i} \psi \p_j\psi dx' dx_1\\\no
&&\q\q\q + \iiint_{\Omega}\sum_{i,j=2}^3 \left(\frac{1}{2} d \p_1 k_{ij}\p_{i}\psi \p_j\psi - d\p_1 \psi \p_i k_{ij}\p_j\psi \right) dx' dx_1.
\ee

According to \eqref{6}-\eqref{7}, there exist positive constants $\epsilon_*$ and $\de_*$ such that if $0<\epsilon\leq\epsilon_*$ and $0 < \de_0 \leq\de_*$ in Lemma \ref{coe1}, we can conclude that
\be\no
&& \bar{k}_1 d - \f 12 \p_{1} (d k_{11}) - d \sum_{i=2}^3\p_i k_{1i}= \bar{k}_1 d - \f 12 (d \bar{k}_{11})' - \f 12 \p_{1} ((k_{11} - \bar{k}_{11})d) - d \sum_{i=2}^3\p_i k_{1i} \\\no
&& \geq 4 - \f 12 \| \p_{1} ((k_{11} - \bar{k}_{11})d) \|_{L^{\oo}(\Om)} - \sum_{j=2}^3\| d\p_j k_{1j}\|_{L^{\oo}(\Om)}\geq 3 , \q \forall (x_1 ,x') \in \Om,
\ee
due to the Sobolev embedding $H^3(\Omega) \subset C^{1, \f 12 }(\ol{\Omega})$ and $H^2(\Omega) \subset L^{\oo } (\Omega) $. Note also $d (L_0) < 0$ and $d(L_1) < 0$, while $k_{11} (L_0, x') > 0$ and $k_{11} (L_1, x') < 0$, it can be inferred from \eqref{4} that
\be\no
\iint_{\mb{E}}|\p_1\psi(L_0,x')|^2+|\nabla \psi(L_1,x')|^2dx'+ \|\na \psi\|_{L^2(\Omega)}^2\leq C_*\iiint_{\Omega} d (x_1) \p_{1} \psi F dx' d x_1,
\ee
due to $\psi(L_0,x') = 0$. Then \eqref{H10} follows from this and the Poincare's inequality.

\end{proof}

This implies the following corollary.
\begin{corollary}\label{uniqueness}
{\it Under the assumptions in Lemma \ref{H1e}, the $H^2(\Om)$ strong solution to \eqref{li1} is unique.
}\end{corollary}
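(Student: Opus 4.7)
The plan is to reduce the uniqueness claim directly to the $H^1$ energy estimate already proved in Lemma \ref{H1e}. Since problem \eqref{li1} is linear in $\psi$ and the boundary conditions are homogeneous in the unknown, given two $H^2(\Omega)$ strong solutions $\psi_1, \psi_2$, the difference $\psi := \psi_1 - \psi_2 \in H^2(\Omega)$ is a strong solution of \eqref{li1} with $F \equiv 0$, trace $\psi(L_0, x') = 0$ for $x' \in \mathbb{E}$, and $(n_2\partial_2 + n_3\partial_3)\psi = 0$ on $\Gamma_w$. The goal reduces to showing $\psi \equiv 0$.

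First I would verify that the estimate \eqref{H10} of Lemma \ref{H1e}, which is stated for classical solutions, extends to $H^2(\Omega)$ strong solutions. A quick inspection of the proof confirms this: the multiplier identity \eqref{4} is obtained by testing against $d(x_1)\partial_1\psi$, and each second-derivative term pairs with $\partial_1 \psi \in H^1(\Omega) \hookrightarrow L^2(\Omega)$ to give an $L^1$ integrand; the boundary traces $\nabla\psi|_{x_1 = L_0}, \nabla\psi|_{x_1 = L_1}$ lie in $H^{1/2}(\mathbb{E}) \hookrightarrow L^2(\mathbb{E})$, so the entrance/exit integrals are well-defined; and the side-wall boundary contribution on $\Gamma_w$ still vanishes via the combination of \eqref{coe103} and the homogeneous slip trace $(n_2\partial_2 + n_3\partial_3)\psi = 0$ on $\Gamma_w$, invoking \eqref{coe104} with $m=0$. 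Alternatively, one may approximate $\psi$ in $H^2(\Omega)$ by a sequence $\{\psi^m\} \subset C^\infty(\overline{\Omega})$ respecting the two boundary conditions, note that $F^m := \mathcal{L}\psi^m \to 0$ in $L^2(\Omega)$ as $m\to\infty$, apply Lemma \ref{H1e} to each $\psi^m$, and pass to the limit.

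Either route gives \eqref{H10} for $\psi$ with the right-hand side equal to zero, so $\|\psi\|_{H^1(\Omega)} = 0$, whence $\psi \equiv 0$ in $\Omega$, establishing uniqueness. There is no substantive obstacle here beyond justifying the integration by parts at $H^2$ regularity, which is handled by the standard density argument sketched above; the smallness thresholds $\epsilon_*, \delta_*$ and the positivity structure encoded in Lemma \ref{bkg-coe} have already done all the analytic work inside Lemma \ref{H1e}.
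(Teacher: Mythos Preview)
Your proposal is correct and essentially coincides with the paper's proof. The paper takes exactly your ``alternative'' route: it approximates the difference $v=\psi_1-\psi_2$ in $H^2(\Omega)$ by $C^2(\overline{\Omega})$ functions $\{v_m\}$ satisfying the boundary conditions, observes that $\|\mathcal{L}v_m\|_{L^2(\Omega)}\to 0$, applies the $H^1$ estimate \eqref{H10} of Lemma~\ref{H1e} to each $v_m$, and passes to the limit to conclude $v\equiv 0$.
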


\begin{proof}
Suppose that $\psi_m, m=1,2$ in $H^2(\Om)$ are two strong solutions to \eqref{li1} where the equation in \eqref{li1} is satisfied almost everywhere. Let $v=\psi_1-\psi_2$. Then $v$ satisfies the boundary conditions in \eqref{li1} and $\mc{L} v=0$ holds almost everywhere. Note that $v\in H^2(\Omega)$, there exists a sequence of $C^2(\ol{\Om})$ functions $\{v_m\}_{m=1}^{\infty}$ satisfying the boundary conditions in \eqref{li1} and converging to $v$ in $H^2(\Om)$. Since $\mc{L}v=0$, the $L^2(\Om)$ norms of $\mc{L} v_m$ will converge to $0$ as $m\to \infty$. It follows from the $H^1(\Om)$ estimate in \eqref{H10} that
\be\no
\|v_m\|_{H^1(\Omega)}\leq C_*\|\mc{L} v_m\|_{L^2(\Omega)}\to 0\ \ \text{as }\ \ m\to \infty.
\ee
Thus $\|v\|_{H^1(\Omega)}=0$ and $v\equiv 0$.

\end{proof}

\subsection{The existence of the $H^2(\Om)$ strong solution to \eqref{li1}}\label{se32}

To show the existence of a $H^2(\Om)$ strong solution to \eqref{li1}, we extend the problem \eqref{li1} along the positive $x_1$ direction to an auxiliary elliptic-hyperbolic-elliptic mixed problem in a longer cylinder. First, we extend the background solution to the interval $[L_0, L_2]$ where $L_2 = 2 L_1$ so that the functions $\bar{k}_1$, $\bar{k}_{11}$ defined as in \S\ref{formulation} also satisfy the properties in \eqref{6}-\eqref{7} on $[L_0, L_2]$ if $d_0$ is chosen to be large enough. This can be achieved by simply extending the function $\bar{f}$ to $[L_0, L_2]$ so that $\bar{f} (x_1) $ is a $C^4$ differentiable function on $[L_0, L_2]$ and $\bar{f} (x_1)$ is positive on $(0, L_2]$. According to the theory of ordinary differential equations, the corresponding solution $(\bar{u}, \bar{\rho})$ as for \eqref{1df} and the associated functions $\bar{k}_1$, $\bar{k}_{11}$ can be extended to the interval $[L_0, L_2]$ and satisfy the properties in \eqref{6}-\eqref{7} on $[L_0, L_2]$ if $d_0$ is sufficiently large.

 Let $\ell = \f {L_1}{20}$. Define two non-increasing cut-off functions on $[L_0, L_2]$ as follows
 \be\no
 \zeta_1 (x_1) = \begin{cases}
               1, & \mbox{if } L_0 \le x_1 \le L_1 + 2 \ell,  \\
               0, & \mbox{if } L_1 + 4 \ell \le x_1 \le L_2,
             \end{cases},
             \q
 \zeta_2 (x_1) = \begin{cases}
               1, & \mbox{if } L_0 \le x_1 \le L_1 + \ell, \\
               0, & \mbox{if } L_1 + 2 \ell \le x_1 \le L_2.
             \end{cases}
 \ee
Set
\be\no
\bar{a}_{11} (x_1) = \bar{k}_{11} (x_1) \zeta_1 (x_1) + (1 - \zeta_1 (x_1)), \\\no
\bar{a}_1 (x_1) = \bar{k}_1 (x_1) \zeta_2 (x_1) - k_0 (1 - \zeta_2 (x_1)),
 \ee
 where $k_0$ is a positive constant to be specified later. Then
\be\label{a11}
&&\bar{a}_{11} (x_1 ) = \begin{cases}
                         \bar{k}_{11} (x_1), & \mbox{if } L_0 \le x_1 \le L_1 + 2 \ell, \\
                         1 , & \mbox{if } L_1 + 4 \ell \le x_1 \le L_2,
                       \end{cases}
\\\no
&&\bar{a}_1 (x_1) = \begin{cases}
                     \bar{k}_1 (x_1), & \mbox{if } L_0 \le x_1 \le L_1 + \ell, \\
                     - k_0 , & \mbox{if }L_1 + 2 \ell \le x_1 \le L_2,
                   \end{cases}
 \ee
 and for $j = 0, 1 ,2 ,3$
 \be\no
 && 2 \bar{a}_1 + (2 j -1) \bar{a}_{11}' = 2 \bar{k}_1 \zeta_2 + (2 j -1) \bar{k}_{11}' \zeta_1 + (2j -1) (\bar{k}_{11} - 1) \zeta_1' - 2 k_0 (1 - \zeta_2)\\\no
 && = \begin{cases}
        2 \bar{k}_1 + (2 j -1) \bar{k}_{11}' \le -\ka_* < 0, & \mbox{if } L_0 \le x_1 \le L_1 + \ell, \\
        2 \bar{k}_1 \zeta_2 + (2 j -1) \bar{k}_{11}' - 2 k_0 (1 - \zeta_2), & \mbox{if } L_1 + \ell \le x_1 \le L_1 + 2 \ell, \\
        (2 j -1) \bar{k}_{11}' \zeta_1 + (2 j -1) (\bar{k}_{11} - 1) \zeta_1' - 2 k_0, & \mbox{if } L_1 + 2 \ell \le x_1 \le L_1 + 4 \ell, \\
        - 2 k_0 , & \mbox{if } L_1 + 4 \ell \le x_1 \le L_2.
      \end{cases}
 \ee
 Therefore, for sufficiently large $k_0$, $d_0 > 0$, the following inequalities hold for any $x_1 \in [L_0, L_2]$,
\be\label{8}
&&2 \bar{a}_1 + (2 j -1) \bar{a}_{11}' \le - \ka_* < 0, \q j = 0, 1, 2, 3, 4,\\\label{9}
&&(\bar{a}_1 + j \bar{a}_{11}') d - \f 12 (\bar{a}_{11} d)' \ge 4, \q  j = 0, 1, 2, 3,
\ee
where $d (x_1) = 6 (x_1 - d_0) < 0$ for any $x_1 \in [L_0, L_2]$.

Introduce an extension operator $\mc{E}$ that extends a function $f (x_1,x')$ from $\Om$ to $\Om_2 \co\{(x_1,x'): L_0 < x_1 < L_2, x'\in \mb{E}\} $ as
\be\no
\mc{E} (f) (x_1,x') = \begin{cases}
                   f (x_1, x'), & \mbox{if } (x_1, x') \in \Om, \\
                   \sum_{j =1}^4 c_j f \big(L_1 + \frac 1j (L_1 - x_1), x'\big) , & \mbox{if } (x_1, x') \in (L_1, L_2) \times \mb{E},
                 \end{cases}
\ee
where the constants $c_j, j=1,2,3,4$ are uniquely determined by the following algebraic equations
\be\no
\sum_{j=1}^4 (-1)^k j^{-k} c_j = 1, \q k = 0, 1, 2,3.
\ee
The extension operator $\mc{E} $ is a bounded operator from $H^j(\Om)$ to $H^j(\Om_2)$ for any $j = 1, 2, 3, 4$. Now one can define the extension of the operator $\mc{L}$ in \eqref{li1} to the domain $\Om_2$ as follows
\be\label{aij}\begin{cases}
a_{11} = \bar{a}_{11} + \mc{E} (k_{11} - \bar{k}_{11}),
\q  a_{1i} = a_{i1} = \mc{E} (k_{1i}), \ \ \ i=2,3, \\
a_{ii}= 1+ \mc{E}(k_{ii}-1),\ \ \ i=2,3,\\
a_{23}= a_{32}= \mc{E}(k_{23}),\ \ \ \ G(x_1,x')= \mc{E}(F(x_1,x')).
\end{cases}\ee
Then, similar to Lemma \ref{coe1}, the following Lemma holds.

\begin{lemma}\label{coe2}
The functions $a_{ij},i,j=1,2,3$ defined in \eqref{aij} satisfy the following properties
\be\no
&&\sum_{j=1}^3 \|a_{1j}-\bar{a}_{11}\delta_{1j}\|_{H^3(\Om_2)}+\sum_{i,j=2}^3 \|a_{ij}-\delta_{ij}\|_{H^3(\Om_2)}\leq C_*(\epsilon+\delta_0),\\\no
&&\sum_{i,j=2}^3 a_{ij}(x_1,x')\xi_i \xi_j \geq \frac12 (\xi_2^2+\xi_3^2),\ \ \forall (x_1,x')\in \Om_2, \forall (\xi_2,\xi_3)\in \mbR^2,\\\no
&&\displaystyle\sum_{i=2}^3 n_i a_{1i}(\nabla \psi(x_1,x'))=0,\ \ \forall (x_1,x')\in [L_0,L_2]\times \p\mb{E}.
\ee
For any $\Psi\in C^4(\ol{\Om_2})$ satisfying the boundary condition $(n_2\p_2+n_3\p_3)\Psi=0$ on $[L_0,L_2]\times \p\mb{E}$, it holds that
\be\label{coe203}
\displaystyle\sum_{i,j=2}^3 \p_1^m a_{ij}\p_j\Psi n_i=0,\ \  &\forall (x_1,x')\in [L_0,L_2]\times \p\mb{E},\ \ m=0,1,2,3.
\ee
\end{lemma}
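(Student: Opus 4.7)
\textbf{Proof plan for Lemma \ref{coe2}.} My overall strategy is to transfer the properties of $k_{ij}$ on $\Om$ (already given by Lemma \ref{coe1}) to the extended coefficients $a_{ij}$ on $\Om_2$ by exploiting the fact that the extension operator $\mc{E}$ acts as a linear combination of pure $x_1$-reflections with constants $c_j$ independent of $x$, and that the cut-offs $\zeta_1,\zeta_2$ depend on $x_1$ alone. Under such a structure, every boundary property encoded as a pointwise identity in $x'$ survives to $\Om_2$, while $H^3$ control is inherited automatically from the $H^j(\Om)\to H^j(\Om_2)$ boundedness of $\mc{E}$.

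For the first displayed inequality I will combine three facts: (i) $\mc{E}:H^3(\Om)\to H^3(\Om_2)$ is bounded; (ii) $\bar{a}_{11}-1$, $\bar{a}_1+k_0$ and the cut-offs are smooth functions of $x_1$ with bounded $C^3$ norms, uniformly controlled by the background flow; and (iii) Lemma \ref{coe1} supplies the $H^3(\Om)$ bounds on $k_{1j}-\bar{k}_{11}\delta_{1j}$ and $k_{ij}-\delta_{ij}$. Applying the triangle inequality to the defining formulas in \eqref{aij} delivers the $C_*(\epsilon+\delta_0)$ bound. The uniform positive definiteness of $(a_{ij})_{i,j=2}^3$ then follows from the Sobolev embedding $H^3(\Om_2)\hookrightarrow L^\infty(\Om_2)$, which yields $\|a_{ii}-1\|_{L^{\infty}(\Om_2)}+\|a_{23}\|_{L^{\infty}(\Om_2)}\leq C_*(\epsilon+\delta_0)$; a routine diagonal-dominance argument using $2|\xi_2\xi_3|\leq \xi_2^2+\xi_3^2$ then gives the lower bound $\tfrac12(\xi_2^2+\xi_3^2)$ once $\epsilon$ and $\delta_0$ are sufficiently small.

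The wall slip identity $\sum_{i=2}^3 n_i a_{1i}=0$ on $[L_0,L_2]\times\p\mb{E}$ is a pointwise statement. Since $\mc{E}(f)(x_1,x')$ is a linear combination with $x$-independent weights $c_j$ of reflected values $f(\tilde x_1,x')$ at $\tilde x_1\in[L_0,L_1]$, and since the normal $(n_2(x'),n_3(x'))$ does not depend on $x_1$, the identity $\sum_i n_i k_{1i}(\tilde x_1,x')=0$ from \eqref{coe103} transfers to $a_{1i}$ by forming the same linear combination in $\tilde x_1$.

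The main obstacle will be \eqref{coe203}, because $\p_1^m a_{ij}(x_1,x')$ involves $k_{ij}$ at reflected points $\tilde x_1\in[L_0,L_1]$, whereas $\p_j\Psi$ is evaluated at the original $(x_1,x')$; consequently \eqref{coe104} cannot be invoked directly on the pair $(k_{ij},\Psi)$. I will resolve this via a \emph{freezing trick}: for each fixed $(x_1,x')\in[L_0,L_2]\times\p\mb{E}$, introduce the $x_1$-independent auxiliary function $\psi_2^{\sharp}(\xi_1,y'):=\Psi(x_1,y')$ on $\ol\Om$, which is $C^4$ and inherits $(n_2\p_2+n_3\p_3)\psi_2^{\sharp}=0$ on $\Ga_w$. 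Since $\p_{j'}\psi_2^{\sharp}(\tilde x_1,x')=\p_{j'}\Psi(x_1,x')$ for every $\tilde x_1$, applying \eqref{coe104} to the pair $(k_{ij},\psi_2^{\sharp})$ at each reflected point makes the inner sum vanish; forming the $c_j$-linear combination that defines $\mc{E}$ then yields \eqref{coe203}. This freezing step is the only genuinely new idea; the rest of the argument is bookkeeping.
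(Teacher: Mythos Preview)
Your proposal is correct and, since the paper gives no proof beyond the sentence ``similar to Lemma \ref{coe1},'' it is in fact a faithful elaboration of what the paper leaves implicit: the $H^3$ bounds and positive definiteness follow from the boundedness of $\mc{E}$ together with Lemma \ref{coe1}, and the wall identities survive because $\mc{E}$ only mixes the $x_1$ variable while $(n_2,n_3)$ depends on $x'$ alone. Your freezing trick for \eqref{coe203} --- introducing the $x_1$-constant auxiliary $\psi_2^\sharp(\xi_1,y')=\Psi(x_1,y')$ so that $\p_j\psi_2^\sharp(\tilde x_1,x')=\p_j\Psi(x_1,x')$ and \eqref{coe104} can be applied at each reflected abscissa $\tilde x_1$ --- is exactly the right device to bridge the mismatch between the evaluation points of $k_{ij}$ and of $\p_j\Psi$, and it is the cleanest way to make ``similar to Lemma \ref{coe1}'' rigorous; a naive attempt to write $a_{ij}=k_{ij}(\nabla\tilde\psi_1)$ for some single extended $\tilde\psi_1$ would fail because $\mc{E}(k_{ij}(\nabla\psi_1))\neq k_{ij}(\nabla\mc{E}\psi_1)$ in general. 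One small remark: your item (ii) about the smoothness of $\bar a_{11}-1$ and $\bar a_1+k_0$ is not actually needed for the first displayed inequality, since the definitions in \eqref{aij} give $a_{11}-\bar a_{11}=\mc{E}(k_{11}-\bar k_{11})$ directly and the background pieces cancel; the bound follows from (i) and (iii) alone.
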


Consider now the following auxiliary problem in $\Om_2$:
\be\label{au1}
\begin{cases}
  \mc{M} \Psi =\sum_{i,j=1}^3 a_{ij} \p_{ij}^2 \Psi + \bar{a}_1 \p_{1} \Psi= G(x_1,x'), &  (x_1,x') \in \Om_2, \\
  \Psi (L_0, x') = 0 , & \forall x' \in \mb{E},\\
  (n_2\p_2 \Psi+n_3\p_3 \Psi)(x_1,x')= 0, & \forall (x_1,x') \in [L_0,L_2]\times \p\mb{E},\\
  \p_{1} \Psi(L_2,x') = 0, & \forall x' \in \mb{E}.
\end{cases}
\ee
It follows from \eqref{a11} and Lemma \ref{coe2} that $a_{11}(L_2,x')>0$ for any $x'\in\mb{E}$, so the equation in \eqref{au1} is elliptic near the exit of the cylinder $\{(L_2,x'):x'\in\mb{E}\}$. Thus we could prescribe a Neumann boundary condition at $x_1=L_2$.

We will prove that the existence and uniqueness of $H^2(\Om_2)$ strong solution $\Psi $ to \eqref{au1} and derive some a priori estimates on $\na \p_{1}^2 \Psi $ and $\na \p_{1}^3 \Psi$ in the subregion $(\f 35 L_0, L_1 + 12 \ell) \times \mb{E}$. Since $a_{ij}\equiv k_{ij}$ hold on $\Om$ for $i,j=1,2,3$ and $\bar{a}_1(x_1)=\bar{k}_1(x_1)$ for all $x_1\in [L_0,L_1]$, we set $\psi(x_1,x')=\Psi(x_1,x')$ for any $(x_1,x')\in \Omega$, then $\psi$ would be the unique $H^2(\Omega)$ strong solution to \eqref{li1} due to Corollary \ref{uniqueness}.

To find a solution to \eqref{au1}, we consider a singular perturbation problem to \eqref{au1} by adding an additional third order dissipation term and one more boundary condition at the entrance
\be\label{au2}
\begin{cases}
  \mc{M}^{\si} \Psi^{\si}  = \si \p_{1}^3 \Psi^{\si}  + \sum_{i,j=1}^3 a_{ij} \p_{ij}^2 \Psi^{\si} + \bar{a}_1 \p_{1} \Psi^{\si} = G(x_1,x'), \q \text{in } \Om_2, \\
  \Psi^{\si}  (L_0,x') = \p_{1}^2 \Psi^{\si} (L_0,x') = 0,\q\q \forall x' \in \mb{E},\\
  (n_2\p_2 \Psi^{\si}+n_3\p_3 \Psi^{\si})(x_1,x')= 0, \q\q \forall (x_1,x') \in [L_0,L_2]\times \p\mb{E},\\
  \p_{1}\Psi^{\si}(L_2,x') = 0,\q\q \forall x' \in \mb{E},
\end{cases}
\ee
with $\sigma$ being a small positive constant. The method of singular perturbations combined with the Galerkin approximation for mixed type problems was suggested by Egorov \cite{eg87} and modified by Kuzmin \cite{ku02} using simpler cut-off functions. Here we supplement the singular perturbation problem with $\p_1^2\Psi^{\si}(L_0,x')=0$, different from the boundary condition $\p_1\Psi^{\si}(L_0,x')=0$ used by Kuzmin \cite{ku02}, which enables us to handle the three dimensional case and also simplify the a priori estimates by using some simpler but more effective multipliers.

Then the following $H^2(\Om_2)$ estimate for the solution $\Psi^{\si}$ to \eqref{au2} can be proved.

\begin{lemma}\label{aH2}
There exist positive constants $\sigma_*,\epsilon_*$ and $\de_* >0$ depending only on the background flow, the potential force and the boundary data, such that if $0<\sigma\leq\sigma_*,0<\epsilon\leq\epsilon_*$ and $0 < \de_0 \leq\de_*$ in Lemma \ref{coe2}, the classical solution $\Psi^{\si}$ to \eqref{au2} satisfies
\be\label{aH1}
  && \si\|\p_{1}^2 \Psi^{\si}\|_{L^2(\Om_2)}^2 +\|\Psi^{\si}\|_{H^1(\Om_2)}^2 + \int_{L_0}^{\frac{L_0}{10}}\iint_{\mb{E}} |\nabla\p_1\Psi^{\si}|^2\le C_* \|G\|_{L^2(\Om_2)}^2, \\\label{aH21}
  &&  \int_{\f 45 L_0}^{L_1 + 16 \ell} \iint_{\mb{E}}\si  |\p_{1}^3 \Psi^{\si}|^2 dx' dx_1 +\iiint_{\Om_2} |\na\p_1\Psi^{\si }|^2 \leq C_{\sharp} \|G\|_{H^1(\Om_2)}^2,
  \ee
where $C_{*}$ depends only on the $H^3(\Om_2)$ norms of $a_{ij}, i,j=1,2,3$, $\bar{a}_1$, and $C_{\sharp}$ depends only on the $C^2(\ol{\Om_2})$ norms of $a_{ij}, i,j=1,2,3$, $\bar{a}_1$.
\end{lemma}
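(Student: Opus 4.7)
The plan is to iterate the multiplier method of Lemma \ref{H1e}, with $d(x_1)\p_1\Psi^\sigma$ as the basic multiplier, now adapted to accommodate the new $\sigma\p_1^3$ dissipation and the fact that the extended operator is elliptic near $x_1=L_2$. The sign conditions \eqref{8}--\eqref{9} on $\bar{a}_1,\bar{a}_{11}$ are precisely what is needed for the resulting quadratic form to remain coercive after one or two $x_1$-differentiations of the equation.

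For \eqref{aH1}, test $\mc{M}^\sigma\Psi^\sigma=G$ against $d(x_1)\p_1\Psi^\sigma$ and integrate over $\Om_2$. The second-order part reproduces the computation of Lemma \ref{H1e} nearly verbatim: \eqref{9} with $j=0$ gives coercivity, \eqref{coe203} with $m=0$ annihilates the sidewall contributions, the Neumann condition $\p_1\Psi^\sigma(L_2,x')=0$ kills the boundary integral at the new exit, and the smallness of $\epsilon,\delta_0$ absorbs the perturbation $a_{ij}-\bar{a}_{11}\delta_{1i}\delta_{1j}$; one obtains $\|\Psi^\sigma\|_{H^1}^2+\int_{\mb{E}}|\p_1\Psi^\sigma(L_0,x')|^2\,dx'\leq C_*\|G\|_{L^2}^2$. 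The dissipation contributes $\sigma\iiint d\,\p_1^3\Psi^\sigma\,\p_1\Psi^\sigma$, and two integrations by parts in $x_1$ using $d''=0$, $\p_1^2\Psi^\sigma(L_0,x')=0$, and $\p_1\Psi^\sigma(L_2,x')=0$ convert this into $-\sigma\iiint d(\p_1^2\Psi^\sigma)^2\geq c\,\sigma\|\p_1^2\Psi^\sigma\|_{L^2}^2$. The subsonic-strip term $\int_{L_0}^{L_0/10}\iint|\nabla\p_1\Psi^\sigma|^2$ is then obtained by localizing with a cutoff supported in a slightly larger subsonic slab: on its support $a_{11}$ stays uniformly positive, the operator is uniformly elliptic, and interior-boundary $H^2$ regularity (Dirichlet at $L_0$, slip on $\Gamma_w$, the $\sigma\p_1^2\Psi^\sigma$-piece absorbed into the source by the previous step) delivers the bound.

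For \eqref{aH21}, differentiate $\mc{M}^\sigma\Psi^\sigma=G$ in $x_1$ to obtain
\begin{equation*}
\sigma\p_1^3(\p_1\Psi^\sigma)+\sum_{i,j=1}^3 a_{ij}\p_{ij}^2(\p_1\Psi^\sigma)+\bar{a}_1\p_1(\p_1\Psi^\sigma)=\p_1 G-\bar{a}_1'\p_1\Psi^\sigma-\sum_{i,j=1}^3\p_1 a_{ij}\,\p_{ij}^2\Psi^\sigma,
\end{equation*}
and test against $\chi(x_1)d(x_1)\p_1^2\Psi^\sigma$, where $\chi$ is a smooth cutoff equal to $1$ on $[\tfrac{4L_0}{5},L_1+16\ell]$ and vanishing near both ends of $[L_0,L_2]$. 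The principal and first-order parts yield, by the same multiplier identity together with \eqref{9} and \eqref{coe203} for $m=0,1$, a coercive bound on the cutoff version of $\iiint|\nabla\p_1\Psi^\sigma|^2$ together with the localized $\sigma\int_{4L_0/5}^{L_1+16\ell}\iint|\p_1^3\Psi^\sigma|^2$. The global bound on $\iiint_{\Om_2}|\nabla\p_1\Psi^\sigma|^2$ is then completed by a tangential-derivative energy estimate: apply $\p_k$ for $k=2,3$ to the original equation, test against $d\,\p_k\Psi^\sigma$, and use \eqref{coe203} with $m=0$ plus the already-controlled $\|\Psi^\sigma\|_{H^1}$ to close the lower-order remainder.

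The main obstacle is the source $\sum_{i,j=2}^3\p_1 a_{ij}\,\p_{ij}^2\Psi^\sigma$, which is nominally third-order in $\Psi^\sigma$ and cannot be absorbed directly against $\chi d\p_1^2\Psi^\sigma$. As flagged in the discussion surrounding \eqref{W105}, the fix is to integrate by parts once in $x_j$: the resulting $\Gamma_w$-boundary integral vanishes by \eqref{coe203} with $m=1$, and the interior integrand becomes, schematically, $\chi d\,\p_1 a_{ij}\,\p_i\p_1\Psi^\sigma\,\p_j\p_1\Psi^\sigma$ plus $\chi d\,(\p_j\p_1 a_{ij})\,\p_i\Psi^\sigma\,\p_1^2\Psi^\sigma$. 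The first is quadratic in $\nabla\p_1\Psi^\sigma$ and is absorbed into the coercive left-hand side once smallness gives $\|\p_1 a_{ij}\|_{L^\infty}\leq C_*(\epsilon+\delta_0)$, with the Sobolev embedding $H^3(\Om_2)\hookrightarrow W^{1,\infty}(\Om_2)$ being essential. The second is handled by Cauchy-Schwarz against the already-known $\|\nabla\Psi^\sigma\|_{L^2}$ and a small multiple of the just-controlled cutoff $\|\nabla\p_1\Psi^\sigma\|_{L^2}^2$. This is exactly the step that forces one order more coefficient regularity than the 2D argument and pins down the $H^3$-dependence of $C_*$, while the slightly weaker pointwise control needed at the level of $\nabla\p_1\Psi^\sigma$ accounts for the $C^2$-dependence of $C_\sharp$.
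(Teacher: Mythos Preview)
Your outline for \eqref{aH1} is essentially the paper's argument. For the trouble term $\sum_{i,j\ge 2}\p_1 a_{ij}\,\p_{ij}^2\Psi^\sigma$, the endpoint you describe is right, but note that a single integration by parts in $x_j$ leaves the third-order factor $\p_j\p_1^2\Psi^\sigma$ uncontrolled; one needs a second integration by parts in $x_1$ (exploiting the compact support of $\chi$) to reach the quadratic form $\chi d\,\p_1 a_{ij}\,\p_{1i}^2\Psi^\sigma\,\p_{1j}^2\Psi^\sigma$ you claim---this is exactly \eqref{W105}.

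The genuine gap is in how you obtain the global bound $\iiint_{\Om_2}|\nabla\p_1\Psi^\sigma|^2$ in \eqref{aH21}. Your cutoff $\chi$ transitions from $1$ to $0$ somewhere to the right of $L_1+16\ell$, so the commutator terms carrying $\chi'$ contribute $|\nabla\p_1\Psi^\sigma|^2$ on a slab in $(L_1+16\ell,L_2)$ where you have established nothing; the cutoff estimate therefore does not close. Your proposed remedy, ``apply $\p_k$ for $k=2,3$ and test against $d\,\p_k\Psi^\sigma$,'' fails because $\p_2,\p_3$ are \emph{not} tangential to the wall $\Gamma_w=[L_0,L_2]\times\p\mb{E}$: the function $\p_k\Psi^\sigma$ does not satisfy $(n_2\p_2+n_3\p_3)(\p_k\Psi^\sigma)=0$, and the resulting $\Gamma_w$ boundary integrals are uncontrollable.

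The missing step is precisely the reason the problem was extended to $\Om_2$ in the first place. By construction (see \eqref{a11}), $a_{11}\geq\tfrac12$ on $[L_1+4\ell,L_2]\times\mb{E}$, so the operator is uniformly elliptic near the new exit. Before running the cutoff argument, the paper multiplies \eqref{au2} by $\eta_3^2\p_1^2\Psi^\sigma$ with $\eta_3$ supported in this exit slab and uses $\p_1\Psi^\sigma(L_2,\cdot)=0$ to get $\int_{L_1+4\ell}^{L_2}\iint_{\mb{E}}|\nabla\p_1\Psi^\sigma|^2\leq C_*\|G\|_{L^2}^2$. This exit estimate, together with the subsonic estimate near $L_0$ that you already have, controls $|\nabla\p_1\Psi^\sigma|$ on the full support of $\eta_4'$, after which the cutoff multiplier $\eta_4^2 d\,\p_1^2\Psi^\sigma$ closes and gives \eqref{aH21}. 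No tangential differentiation is involved.
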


\begin{proof}
For the sake of simplicity, the superscript $\si$ is omitted in the following argument. By choosing the same multiplier as in the proof of Lemma \ref{H1e}, one can derive by integrating by parts that
\be\no
&& \iiint_{\Omega_2} d(x_1) \p_{1} \Psi G  dx' dx_1  = \iiint_\Omega d(x_1) \p_{1} \Psi \mc{M}^{\sigma} \Psi dx' dx_1 \\\no
&& = \iiint_{\Omega_2}\bigg\{ -\si d (\p_{1 }^2 \Psi )^2 +\bigg(d \bar{a}_1 - \f 12  \p_{1} (d a_{11})-d\sum_{i=2}^3\p_ia_{1i}\bigg)(\p_{1} \Psi)^2 \\\no
&&+\sum_{i,j=2}^3\bigg(\frac{d'}{2} a_{ij}\p_i \Psi\p_j\Psi- d\p_1\Psi\p_i a_{ij}\p_j\Psi  +  \frac{d}{2} \p_1 a_{ij}\p_i\Psi \p_j \Psi\bigg)-6\si \p_{1} \Psi \p_{1}^2 \Psi\bigg\}\\\no
&&\q-\frac12\iint_{\mb{E}}d(L_2)\sum_{i,j=2}^3(a_{ij}\p_i\Psi\p_j\Psi)(L_2,x')+ d(L_0)(a_{11}(\p_1\Psi)^2)(L_0,x')dx'.
\ee
Similar arguments as in Lemma \ref{H1e} lead to the first part of \eqref{aH1} if $0<\sigma\leq \sigma_*$ with $\sigma_*$ as stated in the Lemma.

Define a monotonic decreasing cut-off function $\eta_1 \in C^{\oo} ([L_0, L_2])$ such that
\be\no
\eta_1 (x_1) =
\begin{cases}
  1, & \mbox{if } L_0 \le x_1 \le \f {L_0}{10}, \\
  0, & \mbox{if } \f {L_0}{20} \le x_1 \le L_2.
\end{cases}
\ee

Multiplying the first equation in $\eqref{au2}$ by $\eta_1^2 \p_{1}^2 \Psi$ and integrating over $\Om_2$ yield
\be\no
&&  \iiint_{\Om_2} \bigg((- \si \eta_1 \eta_1' + \eta_1^2 a_{11}) (\p_{1}^2 \Psi)^2 + \eta_1^2 \sum_{i,j=2}^3 a_{ij} \p_{1i}^2 \Psi\p_{1j}^2 \Psi \bigg) dx' dx_1
\\\no
&&=-\iiint_{\Om_2}\sum_{i,j=2}^3 2 \eta_1 \eta_1' a_{ij}\p_j\Psi \p_{1i}^2\Psi +\eta_1^2 \big(\p_{1}a_{ij}\p_j\Psi\p_{1i}^2\Psi-\p_i a_{ij} \p_j\Psi \p_1^2\Psi\big) dx' dx_1\\\no
&& \quad
+ \iiint_{\Om_2}\eta^2_1 \p_{1}^2 \Psi (G -\bar{a}_1 \p_{1} \Psi - 2\sum_{j=2}^3 a_{1j}\p_{1j}^2 \Psi) dx' dx_1.
\ee
Since $\eta_1 $ is monotonically decreasing, then $- \si \iiint_{\Om_2} \eta_1 \eta_1' (\p_{1}^2 \Psi)^2 dx' dx_1\geq 0$. Given that $\bar{a}_{11}(x_1)=\bar{k}_{11}(x_1)\ge 2\ka_1 > 0$ for all $x_1 \in [L_0, \f {L_0}{10}]$ with some positive constant $\ka_1$, there exists constants $\epsilon_*>0, \de_*$ such that if $0<\epsilon<\epsilon_*$ and $0 < \de_0 \le \de_*$ in Lemma \ref{coe2}, then
\be\no
a_{11} (x_1,x') \ge \ka_1 > 0, \q \forall (x_1, x') \in [L_0, \f {L_0}{10}] \times \mb{E}.
\ee
This, together with Lemma \ref{coe2}, implies
\be\no
&&\ka_1 \int_{L_0}^{\f {L_0}{10}} \iint_{\mb{E}} |\nabla \p_{1}\Psi |^2 dx' dx_1\leq \iiint_{\Omega_2} \eta_1^2 a_{11}(\p_{1}^2 \Psi)^2 + \eta_1^2 \sum_{i,j=2}^3 a_{ij} \p_{1i}^2 \Psi\p_{1j}^2 \Psi dx' dx_1\\\no
&&\leq \iiint_{\Omega_2} (\frac{\ka_1}4+\sum_{j=2}^3|a_{1j}|^2)\eta_1^2|\p_{1}\nabla \Psi|^2 + \frac{C}{\ka_1}\{|G|^2 +(\bar{a}_1^2 + \sum_{i,j=2}^3(|a_{ij}|+|\nabla a_{ij}|)^2)|\na \Psi|^2\}\\\label{aH22}
&&\leq C_* \iiint_{\Om_2} |G|^2 dx' dx_1.
\ee

Choose a monotonic increasing cut-off function $\eta_3 \in C^{\oo} ([L_0, L_2])$ such that
  \be\no
  \eta_3 (x_1) = \begin{cases}
                 0, & L_0 \le x_1 \le L_1 + 2 \ell, \\
                 1, & L_1 + 4 \ell \le x_1 \le L_2.
               \end{cases}
  \ee
Multiplying the equation \eqref{au2} by $\eta_3^2 \p_{1}^2 \Psi$ and integrating by parts over $\Om_2$ yield
  \be\no
&& \iiint_{\Om_2} \eta_3^2 \bigg(a_{11} (\p_{1}^2 \Psi)^2 +\sum_{i,j=2}^3 a_{ij}\p_{1i}^2\Psi \p_{1j}^2 \Psi\bigg)
 + \f {\si }2 \iint_{\mb{E}} (\p_{1}^2 \Psi (L_2, x'))^2 dx'
\\\label{exit}
&&=
\iiint_{\Om_2} \si \eta_3 \eta_3' (\p_{1}^2 \Psi)^2 + \eta_3^2 \p_{1 }^2 \Psi (G - \bar{a}_1 \p_{1} \Psi -2 \sum_{j=2}^3 a_{1j}\p_{1j}^2\Psi)
 \\\no
&& \q- \iiint_{\Om_2}\sum_{i,j=2}^3 2\eta_3 \eta_3' a_{ij}\p_j \Psi\p_{1i}^2 \Psi+\eta_3^2 \big(\p_1 a_{ij}\p_j\Psi \p_{1i}^2 \Psi-\p_i a_{ij}\p_j\Psi \p_{1}^2 \Psi\big),
\ee
where $\p_1^2\Psi(L_0,x')=\p_i\Psi(L_0,x')=\p_{1i}^2 \Psi (L_2,x') = 0$ for any $x'\in \mb{E}$ and $i=2,3$ have been used. The first term on the right hand side can be controlled by \eqref{aH1}. By \eqref{a11} and Lemma \ref{coe2}, it holds that
\be\no
a_{11}(x_1,x')\geq \frac 12,\ \ \forall (x_1,x')\in [L_1 + 4 \ell, L_2]\times \mb{E}.
\ee
Thus it follows from \eqref{exit} that
\be\label{aH23}
\si\iint_{\mb{E}} (\p_{1}^2 \Psi (L_2, x'))^2 dx'+\int_{L_1 + 4\ell}^{L_2} \iint_{\mb{E}} |\na \p_{x_1} \Psi|^2 dx' dx_1 \le C_* \|G_2\|_{L^2(\Om_2)}^2.
\ee

Next we derive the $L^2$ norm of $\nabla \p_1\Psi$ on the subregion $(\f45 L_0, L_1 + 16 \ell)\times \mb{E}$. Set $W_1 = \p_{1} \Psi$. Then $W_1$ solves
\be\label{W1}
\begin{cases}
 \si \p_{1}^3 W_1 + \sum_{i,j=1}^3 a_{ij} \p_{ij}^2 W_1 + (\bar{a}_1+\p_{1} a_{11}) \p_{1} W_1+ \sum_{j=2}^3 2 \p_1 a_{1j} \p_j W_1\\
 \q\q\q+ \sum_{i,j=2}^3\p_{1} a_{ij} \p_{ij}^2 \Psi= \p_{1} G - \bar{a}_1' \p_{1} \Psi, \\
  \p_{1} W_1 (L_0, x') =W_1 (L_2 , x') = 0  , \q \forall x' \in \mb{E},\\
  (n_2\p_2+n_3\p_3) W_1 (x_1, x') = 0, \q \forall (x_1,x') \in \Ga_w.
\end{cases}
\ee

Choose a cut-off function $\eta_4 \in C^{\oo} ([L_0, L_2])$ such that
\be\no
\eta_4 (x_1) = \begin{cases}
               0, & L_0 \le x_1 \le \f {17}{20} L_0, \\
               1, & \frac45 L_0 \le x_1 \le L_1 + 16 \ell, \\
               0, & L_1 + 18 \ell \le x_1 \le L_2.
             \end{cases}
\ee
Multiplying the first equation in $\eqref{W1}$ by $\eta_4^2 d(x_1) \p_{1} W_1$ and integrating over $\Om_2$ lead to
\be\no
&&\iiint_{\Om_2}- \si  \eta_4^2 d(\p_{1}^2 W_1)^2+ \frac{1}{2}(\eta_4^2d)' \sum_{i,j=2}^3 a_{ij}\p_i W_1 \p_j W_1 dx' d{x_1}\\\no
&&\q + \iiint_{\Om_2} \bigg(\eta_4^2 d(\bar{a}_1+\p_1 a_{11})-\frac12 \p_1(\eta_4^2 d a_{11})-\eta_4^2 d \sum_{i=2}^3 \p_i a_{1i}\bigg)(\p_1 W_1)^2 dx' dx_1\\\no
&&=\iiint_{\Om_2}\bigg\{ \eta_4^2 d\p_{1} W_1 (\p_{1} G - \bar{a}_1' \p_{1} \Psi-2\sum_{i=2}^3 \p_1 a_{1i} \p_i W_1)+ \si (\eta_4^2 d)' \p_{1} W_1  \p_{1 }^2 W_1 \\\no
&&\q\q-(\eta_4^2 d)' \sum_{i,j=2}^3\p_1 a_{ij}\p_j\Psi \p_i W_1-\eta_4^2 d \sum_{i,j=2}^3\bigg((\frac32\p_1 a_{ij}\p_j W_1+\p_1^2 a_{ij}\p_j\Psi) \p_i W_1\\\label{W10}
&&\q\q\q\q-(\p_i a_{ij} \p_j W_1+\p_{1i}^2 a_{ij}\p_j\Psi)\p_1 W_1\bigg)\bigg\}.
\ee
Here it should be emphasized that
\be\no
&&\iiint_{\Om_2}\eta_4^2 d \p_1 W_1 \sum_{i,j=2}^3\p_1 a_{ij}\p_{ij}^2\Psi dx' dx_1\\\label{W105}
&&= - \iiint_{\Omega_2}\eta_4^2 d \sum_{i,j=2}^3 (\p_1 a_{ij}\p_j \Psi \p_{1i}^2 W_1+ \p_{1i}^2 a_{ij}\p_j\Psi \p_1 W_1) dx' dx_1\\\no
&&=-\underline{\iint_{\mb{E}}\eta_4^2 d\sum_{i,j=2}^3\p_1 a_{ij}\p_j\Psi\p_i W_1\b|_{x_1=L_0}^{L_2}dx'}+ \sum_{i,j=2}^3\iiint_{\Om_2}(\eta_4^2 d)' \p_1 a_{ij} \p_j\Psi \p_i W_1 \\\no
&&\q+ \sum_{i,j=2}^3\iiint_{\Om_2} \eta_4^2 d (\p_1 a_{ij}\p_j W_1 \p_i W_1 + \p_1^2 a_{ij}\p_j\Psi \p_i W_1-\p_{1i}^2 a_{ij}\p_j\Psi \p_1 W_1),
\ee
where in the first equality, the boundary term vanishes due to \eqref{coe203} for $m=1$. The underlined boundary term vanishes since $\eta_4(L_0)=\eta_4(L_2)=0$. To estimate the term in the last line, we need one order more regularity on the coefficients $(a_{ij})$ in comparison with the 2D and 3D axisymmetric cases \cite{wx23a,wz24a}. Note that if one estimates $\|\p_1^2\psi\|_{L^2}$ for \eqref{li1} directly, the corresponding boundary terms do not vanish and would be out of control. It follows from \eqref{8}-\eqref{9} and the estimates in Lemma \ref{coe2} that
\be\no
 && \eta_4^2 d (\bar{a}_1+\p_{1} a_{11}) - \f 12 \p_{1} (\eta^2_4 d a_{11})- \eta_4^2 d (\p_2 a_{12}+\p_3 a_{13})\\\no
 &&
\geq  \eta_4^2 [4 - \|d(\p_{1} a_{11}-\bar{a}_{11}') \|_{L^{\oo}} - \f 12 \| \p_{1} (d a_{11} - d \bar{a}_{11}) \|_{L^{\oo}}- \| d \sum_{j=2}^3 \p_j a_{1j}\|_{L^{\oo}} ]
\\\no
&& \q\q- \eta_4 \eta_4' d a_{11} \geq 3 \eta_4^2 - \eta_4 \eta_4' a_{11} d, \q \forall (x_1, x') \in \Om_2.
\ee
Then it follows from \eqref{W10} that
\be\no
&&\iiint_{\Om_2}\si \eta_4^2 |\p_1^2 W_1|^2 + 3\eta_4^2 |\nabla W_1|^2 dx' dx_1 \\\no
&&\leq \iiint_{\Om_2} \bigg\{\eta_4^2(|\p_1 G|^2+|\p_1\Psi|^2+\sigma |\p_1 W_1|^2)+ |\eta_4'|^2(\si+\sum_{i,j=1}^3|a_{ij}|)|\nabla W_1|^2\\\no
&&\q + |\eta_4'|^2\sum_{i,j=2}^3|\p_1 a_{ij}|^2|\p_j\Psi|^2+\eta_4^2 \bigg(\sum_{i=2}^3 |\p_1 a_{1i}|+ \sum_{i,j=2}^3|\nabla a_{ij}|\bigg)|\nabla W_1|^2 \\\no
&&\q + \eta_4^2\sum_{i,j=2}^3 (|\p_1 a_{ij}|^2 + |\nabla\p_1 a_{ij}|^2) |\p_j\Psi|^2 +\frac12 \eta_4^2 |\nabla W_1|^2\bigg\} dx' dx_1.
\ee
Using \eqref{aH22} and \eqref{aH23} to control the second term on the right hand side in the above inequality, one can derive
\be\label{aH231}
\int_{\frac{4}{5} L_0}^{L_1+16\ell}\iint_{\mb{E}}\si (\p_1^2 W_1)^2+ |\nabla W_1|^2 dx' dx_1\leq C_{\sharp} \|G\|_{H^1(\Om_2)}^2.
\ee
The estimate \eqref{aH21} follows from \eqref{aH22}-\eqref{aH231} immediately.

\end{proof}

With the estimates \eqref{aH1} and \eqref{aH21} at hand, the existence of $H^2(\Om_2)$ strong solution to \eqref{au1} can be proved as follows. We first prove the existence of the weak $H^2(\Om_2)$ solution to the singular perturbation problem \eqref{au2} by a Galerkin's method and a limit argument. Let $\{b_m(x')\}_{m=0}^{\infty}$ be a family of all eigenfunctions associated to the eigenvalue problem
\be\label{ei}\begin{cases}
-(\p_2^2+\p_3^2) u(x')= \lambda u(x'),\ \ &\forall x' \in \mb{E},\\
(n_2\p_2+n_3\p_3) u=0,\ \ &\text{on }\p\mb{E}.
\end{cases}\ee
It is well-known that the eigenvalue problem \eqref{ei} has a family of eigenvalues $ 0= \lambda_0 < \lambda_1 \leq \lambda_2 \leq ... $  and $ \lambda_m \to +\infty$  as $m \to +\infty$ with corresponding eigenvectors $(b_m(x'))_{m=0}^{\infty}$, which constitute an orthonormal basis in $L^2(\mb{E})$ and an orthogonal basis in $H^1(\mb{E})$. 

Define the approximate solutions as
\be\label{app}
\Psi^{N, \si} (x_1,x') = \sum_{j =0}^N A_j^{N, \si} (x_1) b_j (x'),
\ee
where $\{A_j^{N,\sigma}\}_{j=0}^N$ are chosen such that the following $N+1$ linear equations on $[L_0, L_2]$ and boundary conditions are satisfied
\be\no
\begin{cases}
\iint_{\mb{E}} \mc{M}^{\si} \Psi^{N, \si } (x_1, x') b_m (x') dx' = \iint_{\mb{E}} G(x_1,x') b_m (x') dx' , \q\q m = 0, \cdots, N, \\
\Psi^{N, \si} (L_0, x')=\p_{1}^2 \Psi^{N, \si } (L_0, x') =  0,\\
\p_{1}\Psi^{N, \si } (L_2,x') =0,
\end{cases}
\ee

Thus $\{A_m^{N,\sigma}(x_1)\}_{m=0}^N$ solve the following boundary value problem for a system of ordinary differential equations
\be\label{app2}\begin{cases}
 \si \f {d^3}{ d x_1^3} A_m^{N,\si}
   + \sum_{j= 0}^N \bigg(a_{m j} \f {d^2}{d x_1^2} A_j^{N, \si} +  b_{m j}\frac d {d x_1} A_j^{N, \si} + c_{m j} A_j^{N,\si}\bigg)
   = G_{m} (x_1),\\
   A_m^{N, \si} (L_0) = \frac{d^2}{d x_1^2} A_m^{N,\si} (L_0) = 0,\ \ \ m=0,1,\cdots, N,\\
   \frac {d}{d x_1} A_m^{N, \si } (L_2) = 0
 \end{cases}\ee
where
\be\no\begin{cases}
a_{mj}(x_1) = \iint_{\mb{E}} a_{11} (x_1,x') b_j (x') b_m (x') dx',\\
 b_{mj}(x_1) = \iint_{\mb{E}} 2\sum_{q=2}^3 a_{1q}(x_1,x') \p_{\ell}b_j(x') b_m (x') dx'+\bar{a}_1(x_1)\delta_{mj},\\
 c_{mj} (x_1)= \iint_{\mb{E}} \sum_{i,q=2}^3 a_{i q}(x_1,x')\p_{i q}^2 b_j(x') b_m (x') dx', \\
G_{m} (x_1) = \iint_{\mb{E}} G(x_1, x') b_m (x') dx'.
\end{cases} \ee

\begin{lemma}\label{exist1}
There exists a unique smooth solution $\{A_j^{N, \si }\}_{j=0}^N$ to \eqref{app2} such that $\Psi^{N, \si }$ defined in \eqref{app} satisfies the following estimates
   \be\label{Ha1}
   &&\si\|\p_1^2\Psi^{N,\sigma}\|_{L^2(\Om_2)}^2+\|\Psi^{N,\sigma}\|_{H^1(\Omega_2)}^2+\int_{L_0}^{\frac1{10} L_0}\iint_{\mb{E}} |\nabla\p_1\Psi^{N,\sigma}|^2 \leq C_* \|G\|_{L^2(\Om_2)}^2,\\\label{Ha2}
   &&\int_{\f 9{10} L_0}^{L_1 + 16 \ell} \iint_{\mb{E}}\si  |\p_{1}^3 \Psi^{N,\si}|^2 dx' dx_1+\|\na\p_1\Psi^{N,\si}\|_{L^2(\Om_2)}^2\leq C_{\sharp} \|G\|_{H^1(\Om_2)}^2,
   \ee
 where $C_*, C_{\sharp}$ are positive constants with $C_*$ depending only on the $H^3(\Om_2)$ norms of $a_{ij}, i,j=1,2,3$, $\bar{a}_1$, while $C_{\sharp}$ depending on the $C^2(\overline{\Om_2})$ norms of $a_{ij}$, $\bar{a}_1$.
 \end{lemma}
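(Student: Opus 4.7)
The plan is to handle the lemma in two stages: first to establish well-posedness (existence, uniqueness, and smoothness) for the linear two-point boundary value problem \eqref{app2} for the coefficients $A_j^{N,\si}$, and then to derive the estimates \eqref{Ha1}--\eqref{Ha2} by transferring the multiplier calculations of Lemma \ref{aH2} to the finite-dimensional Galerkin system.

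For the first stage, since $\si > 0$ is a fixed strictly positive constant, the leading coefficient of each equation in \eqref{app2} is non-degenerate, so the system can be rewritten as a first-order linear ODE system of dimension $3(N+1)$ on $[L_0, L_2]$ with smooth coefficients inherited from $a_{ij} \in C^4(\ol{\Om_2})$ and from the eigenfunctions $b_m$. The prescribed data amount to $2(N+1)$ conditions at $L_0$ (namely $A_m(L_0) = A_m''(L_0) = 0$) and $N+1$ conditions at $L_2$ (namely $A_m'(L_2) = 0$), totalling exactly $3(N+1)$, the correct count for a well-posed two-point BVP. Uniqueness will follow once \eqref{Ha1} is established: any homogeneous solution of \eqref{app2} corresponds to a Galerkin function $\Psi^{N,\si}$ solving \eqref{au2} with $G \equiv 0$, whence $\Psi^{N,\si} \equiv 0$. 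Existence is then furnished by the Fredholm alternative for linear two-point BVPs, and the smoothness of each $A_j^{N,\si}$ follows from that of the coefficients $a_{mj}, b_{mj}, c_{mj}$ and of $G_m(x_1) = \iint_{\mb{E}} G(x_1, x') b_m(x') dx'$.

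The second stage exploits the key algebraic fact that for each fixed $x_1$, every derivative $\p_1^k \Psi^{N,\si}(x_1, \cdot)$ lies in $\mathrm{span}\{b_0, \ldots, b_N\}$; hence any multiplier of the form $\phi(x_1) \p_1^k \Psi^{N,\si}$ with a scalar cut-off $\phi$ is an admissible test function in the Galerkin identity
\be\no
\iint_{\mb{E}} \mc{M}^{\si} \Psi^{N,\si}(x_1, x') \, b_m(x') \, dx' = \iint_{\mb{E}} G(x_1, x') \, b_m(x') \, dx', \quad m = 0, \ldots, N.
\ee
Multiplying this identity by $\phi(x_1) (d^k/dx_1^k) A_m^{N,\si}(x_1)$, summing over $m$, and integrating in $x_1$ reproduces exactly the weak formulations employed in Lemma \ref{aH2}. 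Testing successively against $d(x_1) \p_1 \Psi^{N,\si}$, $\eta_1^2 \p_1^2 \Psi^{N,\si}$, and $\eta_3^2 \p_1^2 \Psi^{N,\si}$, and then testing the $x_1$-differentiated Galerkin equation (which governs $W_1^{N,\si} := \p_1 \Psi^{N,\si}$ and formally matches \eqref{W1}, with $\p_1 W_1^{N,\si}(L_0,x') = 0$ and $W_1^{N,\si}(L_2,x') = 0$ coming from the imposed conditions on $\Psi^{N,\si}$) against $\eta_4^2 d(x_1) \p_1 W_1^{N,\si}$, repeats verbatim the computations leading to \eqref{aH1} and \eqref{aH21}, thereby yielding \eqref{Ha1}--\eqref{Ha2}.

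The delicate point I expect to be the main source of bookkeeping is verifying that the boundary-term cancellations on $\Ga_w$ invoked through \eqref{coe203} in the proof of Lemma \ref{aH2} remain valid at the Galerkin level. This holds automatically because each eigenfunction $b_m$ satisfies the homogeneous Neumann condition $(n_2 \p_2 + n_3 \p_3) b_m = 0$ on $\p \mb{E}$, so $\p_1^k \Psi^{N,\si}$ inherits the same condition on $\Ga_w$ for every $k$ and the relevant boundary integrals vanish as in Lemma \ref{coe2}. Likewise, the endpoint conditions built into \eqref{app2} translate exactly to those imposed on $\Psi^{\si}$ in \eqref{au2}, so the integrations by parts at $x_1 = L_0$ and $x_1 = L_2$ proceed unchanged. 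With these compatibilities in hand the constants $C_*$ and $C_{\sharp}$ obtained are uniform in $N$, as required for the subsequent limit $N \to \infty$.
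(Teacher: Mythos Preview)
Your proposal is correct and follows essentially the same approach as the paper: derive the a~priori estimate \eqref{Ha1} by testing the Galerkin system against $d(x_1)\p_1\Psi^{N,\si}$ (which yields uniqueness, hence existence via the Fredholm alternative for the linear two-point BVP, with $C^5$ regularity of $A_m^{N,\si}$ from $C^4$ coefficients), then obtain \eqref{Ha2} by repeating the cut-off multiplier computations of Lemma~\ref{aH2} with $\eta_1^2\p_1^2\Psi^{N,\si}$, $\eta_3^2\p_1^2\Psi^{N,\si}$, and $\eta_4^2 d\,\p_1 W_1^{N,\si}$ at the Galerkin level. Your observation that the lateral boundary cancellations \eqref{coe203} persist because each $b_m$ satisfies the homogeneous Neumann condition is exactly the point the paper relies on implicitly.
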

 \begin{proof}

Recall that $d(x_1)=6(x_1-d_0)$. Multiplying the $m^{th}$ equation in \eqref{app2} by $d(x_1)\frac{d}{dx_1} A_m^{N,\sigma}$, summing from
$0$ to $N$, and integrating over $[L_0,L_2]$, one can get that
\begin{eqnarray}\no
\iiint_{\Omega_2}(\mathcal{M}^{\sigma} \Psi^{N,\sigma}-G)d(x_1)\partial_{1} \Psi^{N,\sigma}dx' dx_1=0.
\end{eqnarray}
Integrating by parts and arguing as in Lemma \ref{aH2} yield
\be\no
\iint_{\Omega_2}\sigma  |\p_{1}^2 \Psi^{N,\sigma}|^2  + |\Psi^{N,\sigma}|^2 + |\nabla\Psi^{N,\sigma}|^2 dx' dx_1\leq C_* \|G\|_{L^2(\Om)}^2.
\ee
This implies the uniqueness of the solution to the Problem \eqref{app2}. For the system of $N+1$ third-order equations supplemented with $3(N+1)$ boundary conditions, the
uniqueness ensures the existence of the solution to \eqref{app2}, one may refer to \cite[Lemma 2.7]{wx23b} for a proof. Thus the existence and uniquness of the approximate solution to the system \eqref{app2} are established. Since the coefficients in \eqref{app2} are in $C^4([L_0,L_2])$, so $A_m^{N,\sigma}$ are in $C^5([L_0,L_2])$.

Let $\eta_i, i=1,3$ be the cut-off functions defined in the proof of Lemma \ref{aH2}. Multiplying the $m^{th}$ equation in \eqref{app2} by $\eta_i^2(x_1)\frac{d^2}{dx_1^2} A_m^{N,\sigma}$ for $i=1,3$ respectively, summing from $0$ to $N$, and integrating over $[L_0,L_2]$, one can argue as for \eqref{aH22} and \eqref{aH23} to get
\be\no
&&\int_{L_0}^{\frac{L_0}{10}}\iint_{\mb{E}} |\nabla\p_{1}\Psi^{N,\sigma}|^2 dx' d x_1\leq  C_*\iiint_{\Omega_2} G^2 dx' dx_1,\\\no
&&\int_{L_1+4\ell}^{L_2}\iint_{\mb{E}} |\nabla\p_{1}\Psi^{N,\sigma}|^2 dx' d x_1\leq  C_{*}\iiint_{\Omega_2} G^2 dx' dx_1.
\ee
Similarly, letting $\eta_4$ be defined in the proof of Lemma \ref{aH2}, multiplying the $m^{th}$ equation in \eqref{app2} by $\frac{d}{dx_1}(\eta_4^2(x_1)d(x_1)\frac{d^2}{dx_1^2} A_m^{N,\sigma})$, summing from $0$ to $N$, and integrating over $[L_0,L_2]$, one can derive an estimate as \eqref{aH231} and thus \eqref{Ha2} follows.

 \end{proof}

Then one can prove that
\begin{lemma}\label{exist}
There exists a unique strong solution $\Psi$ in $H^2(\Om_2)$ to \eqref{au1} satisfying
\be\label{au100}
&&\|\Psi\|_{H^1(\Om_2)}^2 +\int_{L_0}^{\frac1{10} L_0}\iint_{\mb{E}} |\nabla^2\Psi|^2 dx' dx_1\leq C_{*} \|G\|_{L^2(\Om_2)}^2,\\\label{au101}
&&\|\Psi\|_{H^2(\Om_2)} \le C_{\sharp} \|G\|_{H^1(\Om_2)},
\ee
where $C_*$ and $C_{\sharp}$ are positive constants, $C_*$ depends only on the $H^3(\Om_2)$ norms of $a_{ij}, i,j=1,2,3$, $\bar{a}_1$, and $C_{\sharp}$ depends only on the $C^2(\ol{\Om_2})$ norms of $a_{ij}$, $\bar{a}_1$.
\end{lemma}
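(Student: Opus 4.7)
The strategy is a two-step limiting procedure based on the Galerkin approximants $\Psi^{N,\sigma}$ supplied by Lemma \ref{exist1}. First I pass $N \to \infty$ with $\sigma > 0$ fixed to obtain a strong solution $\Psi^{\sigma} \in H^2(\Om_2)$ to the singular perturbation problem \eqref{au2} inheriting the bounds \eqref{aH1} and \eqref{aH21}. Then I pass $\sigma \to 0^+$ to recover the desired strong solution $\Psi$ of \eqref{au1} together with the claimed estimates. Uniqueness is handled separately by an analog of the $H^1$ energy estimate from Lemma \ref{H1e}.

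For the Galerkin limit, the bounds in Lemma \ref{exist1} directly control $\Psi^{N,\sigma}$ in $H^1(\Om_2)$ and $\nabla \p_1 \Psi^{N,\sigma}$ in $L^2(\Om_2)$, but not the pure tangential block $\nabla_{x'}^2 \Psi^{N,\sigma}$. I recover the latter from the equation itself by writing
\begin{equation*}
\sum_{i,j=2}^{3} a_{ij}\, \p_{ij}^2 \Psi^{N,\sigma} = G - \sigma \p_1^3 \Psi^{N,\sigma} - a_{11}\p_1^2 \Psi^{N,\sigma} - 2\sum_{j=2}^{3} a_{1j}\p_{1j}^2 \Psi^{N,\sigma} - \bar{a}_1 \p_1 \Psi^{N,\sigma},
\end{equation*}
and viewing it as a uniformly elliptic equation on each cross section $\{x_1\} \times \mb{E}$. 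The associated co-normal boundary condition on $\p \mb{E}$ is precisely the content of \eqref{coe203} with $m = 0$ combined with $(n_2\p_2 + n_3\p_3)\Psi^{N,\sigma}=0$, so standard cross-sectional elliptic regularity yields a (possibly $\sigma$-dependent) $L^2(\Om_2)$ bound on $\nabla_{x'}^2 \Psi^{N,\sigma}$. Extracting a subsequence converging weakly in $H^2(\Om_2)$ and testing \eqref{app2} against each $b_m(x')$, the limit $\Psi^{\sigma}$ satisfies \eqref{au2} almost everywhere, and the bounds \eqref{aH1}, \eqref{aH21} descend to $\Psi^{\sigma}$ by weak lower semicontinuity.

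For the singular limit $\sigma \to 0^+$, the decisive observation is that \eqref{aH21} gives a $\sigma$-uniform bound $\|\nabla \p_1 \Psi^{\sigma}\|_{L^2(\Om_2)} \leq C_{\sharp} \|G\|_{H^1}$. The tangential block $\nabla_{x'}^2 \Psi^{\sigma}$ is again recovered from the equation by the same cross-sectional elliptic argument; the only new term is $\sigma \p_1^3 \Psi^{\sigma}$, but \eqref{aH1} shows $\sqrt{\sigma}\, \p_1^2 \Psi^{\sigma}$ is uniformly bounded in $L^2(\Om_2)$, so $\sigma \p_1^2 \Psi^{\sigma} \to 0$ strongly in $L^2(\Om_2)$ and hence $\sigma \p_1^3 \Psi^{\sigma} = \p_1(\sigma \p_1^2 \Psi^{\sigma}) \to 0$ in $H^{-1}(\Om_2)$. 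These bounds combine to a $\sigma$-uniform $H^2(\Om_2)$ estimate. Extracting a weakly convergent subsequence and passing to the limit term by term recovers \eqref{au1} a.e.~in $\Om_2$. The boundary conditions $\Psi(L_0, x') = 0$, $\p_1\Psi(L_2, x') = 0$, and $(n_2\p_2 + n_3\p_3)\Psi = 0$ on $\Ga_w$ are all preserved by trace continuity, while the auxiliary condition $\p_1^2 \Psi^{\sigma}(L_0, \cdot) = 0$ is simply dropped.

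For uniqueness and for the estimates \eqref{au100}, \eqref{au101}, I rerun the multiplier argument of Lemma \ref{H1e} for $\mc{M}$ on $\Om_2$, using the extended positivity property \eqref{9}. The key point is that the boundary contribution at $x_1 = L_2$ produced by integration by parts is harmless: the $(\p_1\Psi)^2$ term vanishes by virtue of the Neumann condition $\p_1\Psi(L_2, \cdot)=0$, and the tangential term contributes with favorable sign since $d(L_2) < 0$ and $(a_{ij})_{i,j=2}^{3}$ is positive definite. At $x_1=L_0$ the tangential part vanishes from $\Psi(L_0,\cdot)=0$, while $-d(L_0)a_{11}(L_0,x')>0$ yields control of $\p_1\Psi(L_0,x')$. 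Combined with cross-sectional elliptic estimates using the equation, this gives \eqref{au100} and \eqref{au101}; uniqueness follows by the smooth approximation argument of Corollary \ref{uniqueness}. The main obstacle throughout is that the coefficient-derivative terms $\sum_{i,j=2}^3 \p_1 a_{ij}\, \p_{ij}^2 \Psi$ cannot be controlled by energy estimates in the purely hyperbolic portion, which is exactly why the elliptic extension to $\Om_2$ and the cut-offs from Lemma \ref{aH2} are indispensable: they push the troublesome integration by parts into regions where either the Neumann condition at $L_2$ or the ellipticity of $a_{11}$ near $L_0$ makes the boundary terms disappear.
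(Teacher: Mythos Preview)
Your two–step limiting strategy and your treatment of uniqueness via the multiplier $d(x_1)\p_1\Psi$ on $\Om_2$ are the same as the paper's. The gap lies in how you obtain the tangential second derivatives $\nabla_{x'}^2\Psi^{\sigma}$ in the step $\sigma\to 0^{+}$.

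You write the cross–sectional identity
\[
\sum_{i,j=2}^{3} a_{ij}\,\p_{ij}^2\Psi^{\sigma}
= G - \sigma\,\p_1^3\Psi^{\sigma} - a_{11}\p_1^2\Psi^{\sigma}
- 2\sum_{j=2}^{3} a_{1j}\p_{1j}^2\Psi^{\sigma} - \bar a_1\p_1\Psi^{\sigma}
\]
and claim that the right–hand side is $\sigma$–uniformly in $L^2(\Om_2)$, so that slice–by–slice elliptic regularity on $\mb{E}$ gives a $\sigma$–uniform bound on $\|\nabla_{x'}^2\Psi^{\sigma}\|_{L^2(\Om_2)}$. But the only information you have on the troublesome term is $\sigma\,\p_1^3\Psi^{\sigma}=\p_1(\sigma\,\p_1^2\Psi^{\sigma})\to 0$ in $H^{-1}(\Om_2)$, which is a three–dimensional negative–order statement and says nothing about $\|\sigma\,\p_1^3\Psi^{\sigma}(x_1,\cdot)\|_{L^2(\mb E)}$ or about $\|\sigma\,\p_1^3\Psi^{\sigma}\|_{L^2(\Om_2)}$. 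The estimate \eqref{aH21} does give $\sqrt{\sigma}\,\p_1^3\Psi^{\sigma}\in L^2$ on the subdomain $(\tfrac{4}{5}L_0,L_1+16\ell)\times\mb E$, so there $\sigma\,\p_1^3\Psi^{\sigma}\to 0$ in $L^2$; but on the remaining strips near $x_1=L_0$ and $x_1=L_2$ no such $L^2$ control is available, and the cross–sectional argument breaks down there. Consequently the asserted ``$\sigma$–uniform $H^2(\Om_2)$ estimate'' is not justified, and you cannot extract a weakly $H^2$–convergent subsequence as stated.

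The paper circumvents exactly this point by \emph{not} attempting a uniform $H^2$ bound on $\Psi^{\sigma}$. It passes to the limit only at the level of $H^1(\Om_2)$ for $\Psi^{\sigma}$ and separately for $\p_1\Psi^{\sigma}$ (both available from \eqref{Hs1}--\eqref{Hs2}), obtaining a weak solution $\Psi$ of \eqref{au1} in the integrated sense \eqref{p1}. Only \emph{after} the limit, when the $\sigma\,\p_1^3$ term has disappeared, does it recast \eqref{p1} as the uniformly elliptic problem
\[
-\p_1^2\Psi-\sum_{i,j=2}^{3}\p_i(a_{ij}\p_j\Psi)=\tilde G\in L^2(\Om_2)
\]
and invoke interior and boundary $H^2$ elliptic regularity to recover $\nabla_{x'}^2\Psi\in L^2$ on $(\tfrac{4}{5}L_0,L_1+16\ell)\times\mb E$, while the remaining strips are handled using $a_{11}\ge\kappa_1>0$ there. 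Your argument can be repaired along these lines: drop the claim of $H^2$ convergence for $\Psi^{\sigma}$, pass to the limit in the weak formulation, and upgrade the regularity of the limit $\Psi$ afterwards.
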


\begin{proof}

Let $\Psi^{N,\sigma}$ be constructed in Lemma \ref{exist1}. Thanks to \eqref{Ha1}, $\|\Psi^{N,\sigma}\|_{H^1(\Omega_2)}$ is uniformly
bounded in $N$. Therefore, due to the weak compactness of a bounded set in a Hilbert space, there exists a subsequence, which is still denoted  by $\Psi^{N,\sigma}$ for simplicity, converging strongly in $L^2(\Omega_2)$ and weakly in $H^1(\Omega_2)$ to a limit $\Psi^{\sigma}\in H^1(\Omega_2)$. Due to \eqref{Ha2}, the sequence $\p_1 \Psi^{N,\sigma}$ will converge strongly in $L^2(\Om_2)$ and weakly in $H^1(\Om_2)$ to $\p_1\Psi^{\si}$ as $N$ tends to infinity. The subsequence $\p_1^3\Psi^{N,\sigma}$ will converge weakly in $L^2((\frac{4}{5}L_0, L_1+16\ell)\times \mb{E})$ to $\p_1^3 \Psi^{\sigma}$. The limit $\Psi^{\sigma}$ retains the boundary condition $\Psi^{\si}(L_0,x')=0$ for almost every $x'\in \mb{E}$. Moreover, $\Psi^{\sigma}$ satisfies the following uniform estimates:
\be\label{Hs1}
&&\si\|\p_1^2\Psi^{\si}\|_{L^2(\Om_2)}^2+\|\Psi^{\sigma}\|_{H^1(\Omega_2)}^2+\int_{L_0}^{\frac1{10} L_0}\iint_{\mb{E}} |\nabla\p_1\Psi^{\sigma}|^2\leq C_{*} \|G\|_{L^2(\Omega_2)}^2,\\\label{Hs2}
&& \int_{\f 45 L_0}^{L_1 + 16 \ell} \iint_{\mb{E}}\si  |\p_{1}^3 \Psi^{\si}|^2 dx' dx_1+\|\na\p_1\Psi^{\si}\|_{L^2(\Om_2)}^2\leq C_{\sharp} \|G\|_{H^1(\Om_2)}^2,
\ee
with some positive constants $C_*$ and $C_{\sharp}$ as stated in the Lemma.

Now we show that $\Psi^{\sigma}$ is a weak solution to the system \eqref{au2}. Given any test function $\xi(x_1,x')= \sum_{m=0}^{N_0} \xi_m(x_1) b_m(x')$, where $\xi_m(x_1)\in
C_c^{\infty}((L_0,L_2))$ and $N_0$ is an arbitrary positive integer. Let $N\geq N_0$. Multiplying each of the equations in \eqref{app2} by $\xi_m$ ($\xi_m\equiv 0$ for any $N_0+1\leq m\leq N$), then sum up from
$m=0$ to $m=N$, and integrate with respect to $x_1$ from $L_0$ to $L_2$, one gets that
\be\no
\iiint_{\Omega_2}(\sigma \p_{1}^3 \Psi^{N,\sigma}+\sum_{i,j=1}^3 a_{ij}\p_{ij}^2 \Psi^{N,\sigma}+\bar{a}_1 \p_{1}\Psi^{N,\sigma}) \xi dx' dx_1=0.
\ee
Integrating by parts and passing to the limit for the above weak convergent subsequence of $\psi^{N,\sigma}$ yield
\be\label{au202}
&&\iiint_{\Omega_2}\sigma \p_{1}^2\Psi^{\sigma}\p_{1}\xi+\p_{1}\Psi^{\sigma}\p_{1}(a_{11}\xi)+2\sum_{i=2}^3\p_{1} \Psi^{\sigma}\p_{i}(a_{1i}\xi)\\\no
&&\quad\quad\q\q\q\q +\sum_{i,j=2}^3 \p_{j}\Psi^{\sigma} \p_{i}(a_{ij}\xi)+\xi (G-\bar{a}_1 \p_{1}\Psi^{\sigma}) dx'dx_1=0.
\ee

By a density argument, \eqref{au202} holds for any test function $\xi\in H^1(\Omega)$ vanishing at $x_1=L_0$ and $x_1=L_2$. Next we consider a subsequence of the solutions $\{\Psi^{\sigma}\}$ as $\sigma\to 0$. Thanks to \eqref{Hs1}, $\|\Psi^{\sigma}\|_{H^1(\Omega_2)}$ is
uniformly bounded independent of $\sigma$. This further implies the existence of a weakly convergent subsequence labeled as $\{\Psi^{\sigma_j}\}_{j=1}^{\infty}$ with $\sigma_j\to 0$ as $j\to \infty$ converging weakly in $H^1(\Om_2)$ to a limit $\Psi\in H^1(\Omega_2)$. The subsequence $\{\p_1\Psi^{\si}\}$ converges strongly in $L^2(\Om_2)$ and weakly in $H^1(\Om_2)$ to $\p_1\Psi$. Thus $\Psi$ retains the boundary condition $\Psi(L_0,x')=0$ for almost every $x'\in \mb{E}$ and satisfies the following estimates
\be\label{Hp1}
&&\|\Psi\|_{H^1(\Omega_2)}^2+\int_{L_0}^{\frac1{10} L_0}\iint_{\mb{E}} |\nabla\p_1\Psi|^2 dx' dx_1 \leq C_{*} \|G\|_{L^2(\Omega_2)}^2,\\\label{Hp2}
&&\|\na\p_1\Psi\|_{L^2(\Om_2)}\leq C_{\sharp} \|G\|_{H^1(\Om_2)}^2.
\ee


It follows from \eqref{au202} that
\be\label{p1}
&&\iiint_{\Omega_2}\p_{1}\Psi\p_{1}(a_{11}\xi)+ 2\sum_{i=2}^3\p_{1} \Psi \p_{i}(a_{1i}\xi)\\\no
&&\q\q\q\q +\sum_{i,j=2}^3\p_{j}\Psi \p_{i}(a_{ij}\xi)+(G-\bar{a}_1\p_1\Psi)\xi dx' dx_1=0,
\ee
holds for any $\xi\in H^1(\Omega_2)$ vanishing at $x_1=L_0$ and $x_1=L_2$. Rewrite \eqref{p1} as
\be\label{p2}
&&\iiint_{\Om_2}\p_1 \Psi\p_1\xi + \sum_{i,j=2}^3 a_{ij}\p_j\Psi \p_i\xi dx' dx_1\\\no
&&=\iiint_{\Om_2}\xi\bigg((a_{11}-1)\p_1^2\Psi+ 2\sum_{i=2}^3 a_{1i}\p_{1i}^2 \Psi- \sum_{i,j=2}^3 \p_i a_{ij}\p_j\Psi+ \bar{a}_1 \p_1\Psi-G\bigg) dx' dx_1
\ee
for any $\xi\in H^1(\Omega_2)$ vanishing at $x_1=L_0$ and $x_1=L_2$. Therefore $\Psi\in H^1(\Om_2)$ can be regarded as a weak solution to the following second order elliptic equation
\be\no
-\p_1^2 \Psi -\sum_{i,j=2}^3 \p_i(a_{ij}\p_j\Psi)= \tilde{G}\in L^2(\Om_2),
\ee
where
\be\no
\tilde{G}=(a_{11}-1)\p_1^2\Psi+ 2\sum_{i=2}^3 a_{1i}\p_{1i}^2 \Psi- \sum_{i,j=2}^3 \p_i a_{ij}\p_j\Psi+ \bar{a}_1 \p_1\Psi-G.
\ee
By the interior and boundary $H^2$ estimate near the cylinder wall for elliptic equations in \cite[Chapter 8]{gt}, one has
\be\label{p4}
\int_{\frac{4}{5}L_0}^{L_1+16\ell}\iint_{\mb{E}}\sum_{i,j=2}^3 |\p_{ij}^2\Psi|^2 dx' dx_1\leq C\|\tilde{G}\|_{L^2(\Om_2)}^2\leq C_{\sharp}\|G\|_{H^1(\Om_2)}^2.
\ee
Since $a_{11}\geq \ka_1$ on $[L_0,\frac{1}{10}L_0]\times \ol{\mb{E}}$, one can improve easily the regularity of $\Psi$, the second order derivatives $\p_{ij}^2\Psi, i,j=1,2,3$ exist in the classical sense on $(L_0,\frac{1}{10}L_0]\times \ol{\mb{E}}$, and the following equation for $\Psi$ holds:
\be\label{p5}
\sum_{i,j=1}^3 a_{ij}\p_{ij}^2 \Psi + \bar{a}_1\p_1 \Psi=G,\ \ \forall (x_1,x')\in (L_0,\frac{1}{10}L_0]\times \ol{\mb{E}}.
\ee
For each $x_1\in (L_0,\frac{L_0}{10}]$, rewrite \eqref{p5} as
\be\no\begin{cases}
\sum_{i=2}^3 \p_i^2\Psi(x_1,x')=\{G- a_{11}\p_1^2\Psi- 2\sum_{i=2}^3 a_{1i}\p_{1i}^2\Psi\\
\q\q\q\q-\bar{a}_1\p_1\Psi- \sum_{i,j=2}^3(a_{ij}-\delta_{ij})\p_{ij}^2\Psi\}(x_1,x'),\\
(n_2\p_2+n_3\p_3)\Psi(x_1,x')=0,\ \ \ \forall x'\in \p\mb{E}.
\end{cases}\ee
Then the classical elliptic theory in \cite{gt} implies that for each $x_1\in (L_0,\frac{L_0}{10}]$, there holds
\be\no
&&\|\nabla_{x'}^2 \Psi(x_1,x')\|_{L^2(\mb{E})}^2\leq C(\mb{E})\bigg(\|G-\bar{a}_1\p_1\Psi\|_{L^2(\mb{E})}^2+ \|a_{11}\p_1^2\Psi\|_{L^2(\mb{E})}^2\\\no
&&\q\q+2\sum_{i=2}^3 \|a_{1i}\p_{1i}^2\Psi\|_{L^2(\mb{E})}^2+\sum_{i,j=2}^3\|(a_{ij}-\delta_{ij})\p_{ij}^2\Psi\|_{L^2(\mb{E})}^2\bigg).
\ee
Integrating the above inequality with respect to $x_1$ over $[L_0,\frac{1}{10} L_0]$ gives
\be\label{p8}
&&\|\nabla_{x'}^2 \Psi\|_{L^2(\Om_{1/10})}^2\leq C(\mb{E})\big(\|G\|_{L^2(\Om)}^2+\|\nabla\Psi)\|_{L^2(\Om)}^2\\\no
&&\q\q\q\q+\|\nabla \p_1\Psi\|_{L^2(\Om_{1/10})}^2+ (\epsilon+\delta_0)\|\nabla_{x'}^2 \Psi\|_{L^2(\Om_{1/10})}^2\big)
\ee
where $\Om_{1/10}=(L_0,\frac{1}{10} L_0)\times \mb{E}$. Let $\epsilon+\delta_0$ be small enough such that $C(\mb{E})(\epsilon+\delta_0)\leq \f12$. Then one gets from \eqref{Hp1} and \eqref{p8} that
\be\label{p9}
\|\nabla_{x'}^2 \Psi\|_{L^2(\Om_{1/10})}\leq C_{\sharp}\|G\|_{H^1(\Om_2)}.
\ee
Thus the estimate \eqref{au100} follows from \eqref{Hp1} and \eqref{p9}. Since $a_{11}\geq \f12$ on $[L_1+4\ell,L_2]\times \ol{\mb{E}}$, a similar argument as for \eqref{p9} shows that
 \be\label{p10}
\|\nabla_{x'}^2 \Psi\|_{L^2([L_1+4\ell,L_2]\times \mb{E})}\leq C_{\sharp}\|G\|_{H^1(\Om_2)}.
\ee
Thus the estimate \eqref{au101} follows from \eqref{Hp2},\eqref{p4},\eqref{p9} and \eqref{p10}.

\end{proof}

\begin{lemma}\label{exist2}
   The problem \eqref{li1} has a unique $H^2(\Om)$ strong solution $\psi (x_1,x')$ such that
   \be\label{H1}
   &&\iiint_{\Om} ( |\psi |^2 + |\na \psi|^2) dx' dx_1+ \int_{L_0}^{\frac1{10} L_0}\iint_{\mb{E}} |\nabla^2\psi|^2\leq C_* \|F\|_{L^2(\Om)}^2,\\\label{H2}
   &&\iiint_{\Om} |\nabla^2\psi|^2 dx' dx_1\leq C_{\sharp}\|F\|_{H^1(\Om)}^2,
   \ee
  where $C_*, C_{\sharp}$ are positive constants with $C_*$ depending only on the $H^3(\Om)$ norms of $k_{ij}$, $\bar{k}_1$, and $C_{\sharp}$ depending only on the $C^2(\ol{\Om})$ norms of $k_{ij}, i,j=1,2,3$, $\bar{k}_1$.
\end{lemma}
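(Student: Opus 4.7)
The plan is to realize the $H^2(\Omega)$ strong solution to \eqref{li1} simply as the restriction to $\Omega$ of the $H^2(\Omega_2)$ strong solution $\Psi$ to the auxiliary elliptic-hyperbolic-elliptic problem \eqref{au1} produced by Lemma \ref{exist}. All of the substantive analysis has been done there; the remaining task is to verify that the restriction procedure actually yields a solution of \eqref{li1} and that the promised estimates pass through the extension operator.

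First I would observe, from the definitions in \eqref{aij} and \eqref{a11}, that $a_{ij}(x_1,x') = k_{ij}(x_1,x')$ for every $(x_1,x') \in \Omega$ and $i,j=1,2,3$, that $\bar{a}_1(x_1) = \bar{k}_1(x_1)$ for $x_1 \in [L_0, L_1]$, and that $G|_{\Omega} = \mathcal{E}(F)|_{\Omega} = F$. Setting $\psi := \Psi|_{\Omega}$, one has $\psi \in H^2(\Omega)$ and $\psi$ inherits automatically from \eqref{au1} the equation $\mathcal{L}\psi = F$ almost everywhere in $\Omega$, the entrance condition $\psi(L_0,x') = 0$, and the slip boundary condition $(n_2\p_2 + n_3\p_3)\psi = 0$ on $\Ga_w$. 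No condition is imposed at $x_1 = L_1$, which is consistent with the hyperbolic nature of $\mathcal{L}$ there. Thus $\psi$ is an $H^2(\Omega)$ strong solution of \eqref{li1}.

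For the quantitative bounds, the extension operator $\mathcal{E}$ is bounded from $H^j(\Omega)$ to $H^j(\Omega_2)$ for $j = 0,1$, so $\|G\|_{L^2(\Omega_2)} \leq C \|F\|_{L^2(\Omega)}$ and $\|G\|_{H^1(\Omega_2)} \leq C \|F\|_{H^1(\Omega)}$ with $C$ an absolute constant depending only on the construction of $\mathcal{E}$. Restricting \eqref{au100} to $\Omega$ and using that $(L_0, \tfrac{1}{10}L_0) \times \mathbb{E} \subset \Omega$ yields \eqref{H1} with $C_*$ having the claimed dependence (only on $H^3(\Omega)$ norms of $k_{ij}$ and $\bar{k}_1$, since those determine the $H^3(\Omega_2)$ norms of $a_{ij}$ and $\bar{a}_1$ through $\mathcal{E}$ and the explicit cut-off modification). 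Restricting \eqref{au101} to $\Omega$ and using the bound on $\|G\|_{H^1(\Omega_2)}$ gives \eqref{H2} with $C_\sharp$ controlled by the $C^2(\overline{\Omega})$ norms of $k_{ij}$ and $\bar{k}_1$.

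Uniqueness is already furnished by Corollary \ref{uniqueness}, so the proof is complete. The only point to watch is that the dependence of the constants $C_*, C_\sharp$ on the coefficients in $\Omega$ rather than in $\Omega_2$ is legitimate: the norms of $a_{ij} - \bar{a}_{11}\delta_{1j}$ and $a_{ij} - \delta_{ij}$ on $\Omega_2$ are dominated by the corresponding norms of $k_{ij} - \bar{k}_{11}\delta_{1j}$ and $k_{ij} - \delta_{ij}$ on $\Omega$ via $\mathcal{E}$, while the modified $\bar{a}_{11}$ and $\bar{a}_1$ are smooth universal extensions of $\bar{k}_{11}$ and $\bar{k}_1$. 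There is no genuine obstacle here beyond tracking these dependencies, since the analytic heart of the matter — the multiplier estimates and the singular-perturbation/Galerkin construction — was already carried out in Lemmas \ref{aH2}, \ref{exist1}, and \ref{exist}.
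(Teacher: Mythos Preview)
Your proposal is correct and follows essentially the same approach as the paper: restrict the $H^2(\Omega_2)$ solution $\Psi$ from Lemma \ref{exist} to $\Omega$, verify that the restriction solves \eqref{li1} because $a_{ij}=k_{ij}$, $\bar{a}_1=\bar{k}_1$, and $G=F$ on $\Omega$, pull the estimates \eqref{au100}--\eqref{au101} back through the boundedness of $\mc{E}$, and invoke Corollary \ref{uniqueness} for uniqueness. Your treatment is in fact somewhat more explicit than the paper's about tracking the dependence of the constants through the extension, which is a harmless elaboration.
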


 \begin{proof}
Let $\psi(x_1,x')$ be the restriction on the domain $\Omega$ of the function $\Psi$ obtained in Lemma \ref{exist}. Then $\psi\in H^2(\Omega)$ solves the first equation in \eqref{li2} almost everywhere and also satisfies the boundary conditions in \eqref{li2}. The uniqueness of $H^2(\Om)$ strong solution to the problem \eqref{li1} just follows from Corollary \ref{uniqueness}. Finally, the estimates on $\psi$ in\eqref{H1} and \eqref{H2} follow from \eqref{au100} and \eqref{au101} respectively, since $\|G\|_{L^2(\Omega_2)}\leq C\|F\|_{L^2(\Om)}$ and $\|G\|_{H^1(\Omega_2)}\leq C\|F\|_{H^1(\Om)}$.

\end{proof}

\section{The $H^4$ estimate and the proof of Theorem \ref{3dirro}}\label{high}\noindent

Since $k_{11}(x_1,x')$ changes sign as the fluid passes through the sonic curve, the equation \eqref{li1} is elliptic on subsonic regions and then becomes hyperbolic thereafter, we will analyze the regularity of the solution in the subsonic region and the transonic region separately.

\subsection{The $H^4$ estimate in subsonic regions}
In this subsection, we will derive the $H^4$ estimates for the solution of \eqref{li1} in subsonic regions. Note that the principal part of the equation in \eqref{li1} can be regarded as a small perturbation of $\p_1(\bar{k}_{11}\p_1\psi)+ \p_2^2\psi +\p_3^2\psi$ in $\Om_{1/10}$. To improve the regularity near $\{(L_0,x'): x'\in \p\mb{E}\}$, we utilize a method of separation of variables and the Banach's contraction mapping theorem to derive the $H^4$ estimate of $\psi $ on $\Om_{1/5}\co \{ (x_1,x'): L_0< x_1 < \f {L_0}{5}, x'\in \mb{E}\}$.

Consider the following boundary value problem in the cylinder $\Om_{1/10}$:
\be\label{li5}\begin{cases}
\p_1(\bar{k}_{11}\p_1 v) + \p_2^2 v + \p_3^2 v= g(x_1,x'),\ \ &\text{in }\ \ \Om_{1/10},\\
v(L_0,x')= v(\f1{10} L_0,x')=0, \ \ &\forall x'\in \mb{E},\\
(n_2\p_2+n_3\p_3) v(x_1,x')=0,\ \ &\forall (x_1,x')\in [L_0,\f1{10} L_0]\times \mb{E}.
\end{cases}\ee

Let $\mc{V}$ be the set of $g\in C^{\infty}(\ol{\Om_{1/10}})$ satisfying the boundary condition
\be\label{gcp}
(n_2\p_2 + n_3\p_3)g(L_0,x')=(n_2\p_2 + n_3\p_3)g(\frac1{10}L_0,x')=0\ \ \ \ \text{in }L^{2}(\p\mb{E}).
\ee
Denote by $H^{\varsigma}_{cp}(\Om_{1/10})$ the closure of $\mc{V}$ in the Sobolev spaces $\|\cdot\|_{H^\varsigma(\Om_{1/10})}$ for $\varsigma=1,2$.

\begin{lemma}\label{li50}
Suppose that $g\in H_{cp}^\varsigma(\Om_{1/10})$ for $\varsigma=1,2$. Then there exists a unique solution $v\in H^{2+\varsigma}(\Om_{1/10})$ to the problem \eqref{li5} satisfying
\be\label{li51}
\|v\|_{H^{2+\varsigma}(\Om_{1/10})}\leq C_*\|g\|_{H^\varsigma(\Om_{1/10})}, \ \ \varsigma=1,2.
\ee
where the positive constant $C_*$ depends only on $\bar{k}_{11}, L_0$ and $\mb{E}$.
\end{lemma}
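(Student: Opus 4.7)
The principal operator in \eqref{li5} is uniformly elliptic on $\Om_{1/10}$, since $\bar{k}_{11}(x_1)$ stays bounded below by a positive constant on the subsonic interval $[L_0, \frac{L_0}{10}]$. My plan is first to obtain a weak solution $v \in H^1(\Om_{1/10})$ satisfying the two Dirichlet conditions by the Lax--Milgram theorem on the Hilbert space $V_D = \{u \in H^1(\Om_{1/10}) : u|_{x_1 = L_0} = u|_{x_1 = L_0/10} = 0\}$ equipped with the bilinear form
\[
B(u,\phi) = \iiint_{\Om_{1/10}} \Big(\bar{k}_{11}\p_1 u \p_1 \phi + \sum_{i=2}^3 \p_i u \p_i \phi\Big) \, dx' \, dx_1.
\]
Coercivity follows from the positive lower bound on $\bar{k}_{11}$ and the Poincar\'e inequality in the $x_1$-direction, uniqueness is immediate, and the side-wall Neumann condition is recovered as a natural boundary condition.

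For the higher regularity, the key observation is that the coefficient depends only on $x_1$, so the problem separates in the $x'$ variables. Expand $v$ and $g$ in the Neumann Laplacian eigenbasis $\{b_m(x')\}_{m \ge 0}$ on $\mb{E}$ (the basis introduced in \eqref{ei}) with eigenvalues $0 = \lambda_0 < \lambda_1 \le \cdots \to +\infty$:
\[
v(x_1,x') = \sum_{m \ge 0} A_m(x_1) b_m(x'), \qquad g(x_1,x') = \sum_{m \ge 0} g_m(x_1) b_m(x').
\]
Each coefficient then solves the 1D two-point boundary value problem
\[
(\bar{k}_{11} A_m')' - \lambda_m A_m = g_m, \qquad A_m(L_0) = A_m(L_0/10) = 0.
\]
Standard energy methods for this uniformly elliptic 1D problem, testing against $A_m$ and $-A_m''$ and then applying the same tests to successive $x_1$-derivatives of the equation, produce $\lambda_m$-weighted bounds of the form
\[
\|A_m^{(2+\varsigma)}\|_{L^2}^2 + \sum_{j=0}^{1+\varsigma} \lambda_m^{2+\varsigma - j}\|A_m^{(j)}\|_{L^2}^2 \le C \sum_{j=0}^{\varsigma} \lambda_m^{\varsigma - j}\|g_m^{(j)}\|_{L^2}^2,
\]
uniformly in $m$. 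Summing in $m$ and combining Parseval's identity with the $H^{2+\varsigma}$ elliptic regularity for the Neumann Laplacian on $\mb{E}$ (valid since $\p\mb{E} \in C^3$) recasts these mode-by-mode bounds as the desired inequality $\|v\|_{H^{2+\varsigma}(\Om_{1/10})} \le C_*\|g\|_{H^{\varsigma}(\Om_{1/10})}$.

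The main obstacle I anticipate is the corner behaviour at the two edges $\{L_0, \frac{L_0}{10}\}\times\p\mb{E}$, where the Dirichlet faces meet the Neumann side wall. Without compatibility on $g$, the $H^{2+\varsigma}$ regularity can degenerate at these edges even though the coefficient and geometry are smooth. The hypothesis $g \in H^{\varsigma}_{cp}(\Om_{1/10})$, namely $(n_2\p_2+n_3\p_3)g = 0$ on $\{L_0,\frac{L_0}{10}\}\times\p\mb{E}$, is precisely the corner compatibility that makes the eigenmode expansion $g = \sum g_m b_m$ converge in $H^{\varsigma}$ up to these edges, and which also makes the top/bottom traces of $\p_1^2 v$ and $\p_1^3 v$ obtained from the PDE compatible with the Neumann side-wall condition. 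I would therefore first establish the estimate for $g \in \mc{V}$, where the eigenmode decomposition is classically valid and the 1D bounds are genuine, and then pass to the $H^{\varsigma}_{cp}$ limit using the a priori estimate together with weak compactness.
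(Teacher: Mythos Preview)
Your proposal is correct and follows essentially the same approach as the paper: separation of variables via the Neumann eigenbasis $\{b_m\}$, mode-by-mode $\lambda_m$-weighted energy estimates for the resulting two-point ODEs, reassembly through elliptic regularity for the Neumann Laplacian on $\mb{E}$, and a density argument from $\mc{V}$ to $H^{\varsigma}_{cp}$. The only minor differences are that the paper dispenses with the preliminary Lax--Milgram step (it builds the solution directly as the limit of partial sums $v_N=\sum_{m\le N}a_m b_m$) and derives the one-dimensional bounds by squaring the ODE and integrating rather than by testing against $A_m$ and $-A_m''$; your stated target inequalities coincide exactly with the paper's \eqref{an4}--\eqref{an5}.
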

\begin{proof}
Since $g\in H^\varsigma_{cp}(\Om_{10}) (\varsigma=1,2)$, one can express $g(x_1,x')$ as
\begin{equation}\label{four1}
    g(x_1,x') = \sum_{m=0}^{\infty} g_m(x_1)b_m(x'),
\end{equation}
where $\{b_m(x')\}_{m=0}^{\infty}$ are eigenfunctions to the eigenvalue problem \eqref{ei} and
\be\no
g_m(x_1)= \iint_{\mb{E}} g(x_1,x') b_m(x') dx',\ \ \ m\geq 0.
\ee
The series in \eqref{four1} converges in the $H^\varsigma(\Om_{10})$ norm. Note that $g_m\in H^\varsigma([L_0,\frac1{10} L_0])$ for $\varsigma=1,2$. In the following, we assume that $g_m\in C^2([L_0,\frac1{10} L_0])$ and the general case will follow by a density argument.

The unique solution to \eqref{li5} with $g$ replaced by $g_N(x_1,x')= \sum_{m=0}^{N} g_m(x_1)b_m(x')$ can be represented as $v_N(x_1,x')=\sum_{m=0}^{N} a_m(x_1) b_m(x')$, where
\begin{eqnarray}\label{an}
\begin{cases}
(\bar{k}_{11} a_m')'(x_1) - \lambda_m a_m(x_1) = g_m(x_1),\\
a_m(L_0)=a_m(\frac1{10} L_0)=0.
\end{cases}
\end{eqnarray}
The equation \eqref{an} for $m\geq 0$ has a unique classical solution $a_m$ in $C^3([L_0,\frac1{10} L_0])$. Then $v_N\in C^3(\ol{\Om_{1/10}})$ is a classical solution to \eqref{li5} and it is easy to check that $g_N$ must satisfy the compatibility condition \eqref{gcp}. This is automatically true, since $(n_2\p_2+n_3\p_3)b_m(x')=0$ hold for any $x'\in \p\mb{E}$ and all $m\geq 0$.

In the following, we prove that the sequence $\{v_N\}_{N=1}^{\infty}$ converges in $H^{2+\varsigma}(\Om_{1/10})$ to $v(x_1,x')=\sum_{m=0}^{\infty} a_m(x_1) b_m(x')$ for $\varsigma=1,2$, which solves \eqref{li5} with \eqref{li51}.

Squaring the equation in \eqref{an} and integrating over $[L_0,\frac1{10} L_0]$ yield
\be\no
\int_{L_0}^{\f 1{10} L_0} g_m^2(x_1)dx_1=\int_{L_0}^{\f 1{10} L_0}  |(\bar{k}_{11} a_m')'|^2+2\lambda_m\bar{k}_{11}|a_m'|^2+\lambda_m^2|a_m|^2 dx_1,
\ee
which implies that for $m>0$:
\be\label{an1}
\int_{L_0}^{\f 1{10} L_0} |a_m''|^2+ \la_m |a_m'|^2 + \la_m^2|a_m|^2 dx_1\leq C_*\int_{L_0}^{\f 1{10} L_0} g_m^2 dx_1.
\ee
For $m=0$, $\la_0=0$, it follows from \eqref{an} and the Poincare's inequality that
\be\no
\int_{L_0}^{\f 1{10} L_0} \bar{k}_{11} (a_0')^2 dx_1=- \int_{L_0}^{\f 1{10} L_0} g_0 a_0 dx_1\leq C_*\|g_0\|_{L^2(L_0,\f 1{10}L_0]} \|a_0'\|_{L^2(L_0,\f 1{10}L_0]}.
\ee
Thus using the equation \eqref{an} and the Poincare's inequality again, one obtains
\be\label{an2}
&&\int_{L_0}^{\f 1{10} L_0} (|a_0''|^2 + |a_0'|^2 + a_0^2 )dx_1\leq C_*\int_{L_0}^{\f 1{10} L_0} |g_0|^2 dx_1,\\\no
&&\int_{L_0}^{\f 1{10} L_0}|a_0^{(4)}(x_1)|^2+|a_0^{(3)}(x_1)|^2 dx_1\leq C_*\int_{L_0}^{\f 1{10} L_0}(|g_0''|^2+|g_0'|^2+|g_0|^2) dx_1.
\ee
Note that for each $x_1\in [L_0,\frac{1}{10}L_0]$, $v_N(x_1,x')$ regarded as a function defined on $\mb{E}$ satisfies
\be\no\begin{cases}
\displaystyle\sum_{i=2}^3 \p_i^2 v_N(x_1,x')= \sum_{m=1}^N \lambda_m a_m(x_1) b_m(x'),\ \ \ &\forall x'\in \mb{E},\\
(n_2\p_2+n_3\p_3)v_N(x_1,x')=0,\ \ \ &\forall x'\in\p\mb{E}.
\end{cases}\ee
Then the classical elliptic regularity theory implies that
\be\no
\|\nabla_{x'}^i v_N(x_1,x')\|_{L^2(\mb{E})}^2\leq C_*\|\sum_{m=1}^N \lambda_m a_m(x_1) b_m(x')\|_{H^{i-2}(\mb{E})}^2, \ \ \ i=2,3,4.
\ee
Integrating the above inequalities on $[L_0,\frac{1}{10} L_0]$ yields that
\be\label{an26}
&&\|\nabla_{x'}^i v_N\|_{L^{2}(\Om_{1/10})}\leq C_*\|\sum_{m=1}^N \lambda_m a_m(x_1) b_m(x')\|_{H^{i-2}(\Om_{1/10})},\ \ \ \ i=2,3,4.
\ee
The estimate \eqref{an26} for $i=2$, together with \eqref{an1} and \eqref{an2}, implies that
\be\no
&&\|v_N\|_{H^2(\Om_{1/10})}^2\leq C_*\bigg(\int_{L_0}^{\f 1{10} L_0}\sum_{i=0}^2|a_0^{(i)}|^2 dx_1+\sum_{m=1}^{N}\int_{L_0}^{\f 1{10} L_0}\sum_{i=0}^2\la_m^{2-i}|a_m^{(i)}|^2dx_1\bigg)\\\no
&&\leq C_*\sum_{m=0}^{N}\int_{L_0}^{\f 1{10} L_0} g_m^2 dx_1\leq C_*\|g\|_{L^2(\Om_{1/10})}^2.
\ee
Furthermore, there holds
\be\no
&&\int_{L_0}^{\f 1{10} L_0} \lambda_m |a_m^{(3)}|^2 d x_1=\int_{L_0}^{\f 1{10} L_0} \frac{\la_m}{\bar{k}_{11}^2}(g_m'-2\bar{k}_{11}' a_m''-\bar{k}_{11}'' a_m'+\lambda_m a_m')^2dx_1\\\no
&&\leq C_*\int_{L_0}^{\f 1{10} L_0} (\la_m |g_m'|^2+ \lambda_m|a_m''|^2+ \lambda_m |a_m'|^2+ \lambda_m^3 |a_m'|^2) dx_1 \\\no
&&\leq C_*\int_{L_0}^{\f 1{10} L_0} \lambda_m |g_m'|^2 + (1+\lambda_m^2) |g_m|^2 dx_1,\\\no
&&\int_{L_0}^{\f 1{10} L_0} |a_m^{(4)}|^2 d x_1=\int_{L_0}^{\f 1{10} L_0} \frac{1}{\bar{k}_{11}^2}(g_m''-3\bar{k}_{11}' a_m^{(3)}-3\bar{k}_{11}'' a_m''-\bar{k}_{11}^{(3)} a_m'+\lambda_m a_m'')^2dx_1\\\no
&&\leq C_*\int_{L_0}^{\f 1{10} L_0} |g_m''|^2+|a_m^{(3)}|^2+|a_m''|^2+ |a_m'|^2+ \lambda_m^2|a_m''|^2) dx_1 \\\no
&&\leq C_*\int_{L_0}^{\f 1{10} L_0}\sum_{i=0}^2 \lambda_m^{2-i} |g_m^{(i)}|^2 dx_1
\ee
and thus
\be\label{an4}
&&\int_{L_0}^{\f 1{10} L_0} \sum_{i=0}^3 \lambda_m^{3-i} |a_m^{(i)}|^2 d x_1\leq C_*\int_{L_0}^{\f 1{10} L_0}(|g_m'|^2+\lambda_m |g_m|^2) dx_1,\\\label{an5}
&&\int_{L_0}^{\f 1{10} L_0} \sum_{i=0}^4 \lambda_m^{4-i} |a_m^{(i)}|^2 d x_1\leq C_*\int_{L_0}^{\f 1{10} L_0}\sum_{i=0}^2 \lambda_m^{2-i} |g_m^{(i)}|^2 dx_1.
\ee
Similar to \eqref{an26}, one has
\be\label{an41}
&&\|\nabla_{x'}^i\p_1 v_N\|_{L^2(\Om_{1/10})}\leq C_*\|\sum_{m=1}^N \lambda_m a_m'(x_1) b_m(x')\|_{H^{i-2}(\Om_{1/10})},\ \ \ i=2,3,\\\label{an43}
&&\|\nabla_{x'}^2\p_1^2v_N\|_{L^2(\Om_{1/10})}\leq C_*\|\sum_{m=1}^N \lambda_m a_m''(x_1) b_m(x')\|_{L^2(\Om_{1/10})}.
\ee
It follows from the estimates \eqref{an26} (for $i=3$), \eqref{an4} and \eqref{an41} (for $i=2$) that
\be\label{an419}
\|v_N\|_{H^3(\Omega_{1/10})}\leq C_*\|\sum_{m=0}^N g_m b_m\|_{H^1(\Om_{1/10})}.
\ee
Due to the linearity of the problem \eqref{li5}, the arguments for \eqref{an419} imply that
\be\no
\|v_{N_1}- v_{N_2}\|_{H^3(\Omega_{1/10})}\leq C_*\|\sum_{m=N_1+1}^{N_2} g_m b_m\|_{H^1(\Om_{1/10})},\ \ \forall N_2>N_1,
\ee
which implies that the sequence $\{v_N\}_{N\geq 1}$ converges in $H^3(\Om_{1/10})$ to $v$ with the estimate \eqref{li51} for $\varsigma=1$. The estimate \eqref{li51} with $\varsigma=2$ follows from \eqref{an26} (for $i=3$), \eqref{an5}, \eqref{an41} (for $i=3$) and \eqref{an43}.

\end{proof}

Now we use Lemma \ref{li50} and the Banach contraction mapping theorem to improve the regularity of the $H^2(\Om)$ strong solution to \eqref{li1}.
\begin{lemma}\label{H3}
Under the assumption in Lemma \ref{H1e}, the $H^2(\Om)$ strong solution to \eqref{li1} satisfies
  \be\label{H31}
  \int_{L_0}^{\f {L_0}5} \iint_{\mb{E}} (|\na^3 \psi |^2 + |\na^4 \psi |^2) dx' dx_1 \le C_* \|F\|_{H^2(\Om)}^2,
  \ee
 where the positive constant $C_*$ depends only on the $H^3(\Om)$ norms of $k_{ij}, i,j=1,2,3$, $\bar{k}_1$.
\end{lemma}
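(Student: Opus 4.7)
The plan is to combine the $H^2$ bound from Lemma \ref{exist2} with the separable-operator estimate of Lemma \ref{li50} through a Banach fixed-point argument on the cylinder $\Omega_{1/10}$. Since $L_0 < \frac{L_0}{5} < \frac{L_0}{10} < 0$ lies entirely in the subsonic background region, the coefficient $k_{11}$ is uniformly positive on $\overline{\Omega_{1/10}}$, so $\mathcal{L}$ is strictly elliptic there. Rewrite the equation as
\[
\partial_1(\bar{k}_{11}\partial_1\psi) + \partial_2^2\psi + \partial_3^2\psi = F + (\bar{k}_{11}' - \bar{k}_1)\partial_1\psi - (k_{11} - \bar{k}_{11})\partial_1^2\psi - 2\sum_{i=2}^3 k_{1i}\partial_{1i}^2\psi - \sum_{i,j=2}^3(k_{ij} - \delta_{ij})\partial_{ij}^2\psi,
\]
so that the left-hand side is the separable operator of Lemma \ref{li50} and the right-hand side carries all coefficient deviations, each bounded by $C_*(\epsilon + \delta_0)$ in $H^3(\Omega)$ by Lemma \ref{coe1}. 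Introduce a cut-off $\chi(x_1) \in C^\infty$ with $\chi \equiv 1$ on $[L_0, \frac{L_0}{5}]$ and $\chi \equiv 0$ on $[\frac{L_0}{8}, \frac{L_0}{10}]$; then $w := \chi\psi$ vanishes at both end faces of $\Omega_{1/10}$ and satisfies the tangential Neumann condition on $\Gamma_w$, as required by Lemma \ref{li50}. Standard interior and up-to-the-wall elliptic regularity on the strictly elliptic strip $[\frac{L_0}{5}, \frac{L_0}{10}] \times \mathbb{E}$ already yields full $H^4$ control of $\psi$ there, so the commutator terms produced by the cut-off are handled by known bounds.

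Next, perform a fixed-point iteration in two stages, $\varsigma = 1$ and then $\varsigma = 2$. For $\hat w$ in the cut-off-compatible space $H^{2+\varsigma}_{cp}(\Omega_{1/10})$ introduced just before Lemma \ref{li50}, define $T(\hat w) = v$ by solving the separable problem of Lemma \ref{li50} with source
\[
g[\hat w] := \chi F + \chi(\bar{k}_{11}' - \bar{k}_1)\partial_1\hat w - \chi(k_{11}-\bar{k}_{11})\partial_1^2\hat w - 2\chi\sum_{i=2}^3 k_{1i}\partial_{1i}^2\hat w - \chi\sum_{i,j=2}^3 (k_{ij}-\delta_{ij})\partial_{ij}^2\hat w + \mathcal{C}_\chi\psi,
\]
where $\mathcal{C}_\chi\psi$ collects cut-off commutators supported in the overlap region and estimated by the already-established interior $H^4$ bound on $\psi$. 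Since in three dimensions $H^3(\Omega)$ is a Banach algebra and $H^3 \cdot H^\varsigma \subset H^\varsigma$ for $\varsigma = 1, 2$, Lemma \ref{li50} provides
\[
\|v\|_{H^{2+\varsigma}(\Omega_{1/10})} \leq C_*\|F\|_{H^\varsigma(\Omega)} + C_*\|\psi\|_{H^{1+\varsigma}(\Omega)} + C_*(\epsilon + \delta_0)\|\hat w\|_{H^{2+\varsigma}(\Omega_{1/10})}.
\]
The same estimate applied to differences $T(\hat w_1) - T(\hat w_2)$ shows that $T$ is a contraction once $\epsilon + \delta_0$ is sufficiently small, hence $T$ has a unique fixed point $w^* \in H^{2+\varsigma}_{cp}$. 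By construction $\chi\psi \in H^2(\Omega_{1/10})$ also solves the cut-off problem, so the uniqueness statement in Corollary \ref{uniqueness}, applied to the strictly elliptic cut-off subproblem, forces $w^* = \chi\psi$, and thus $\chi\psi \in H^4(\Omega_{1/10})$. Since $\chi \equiv 1$ on $\Omega_{1/5}$, the desired bound \eqref{H31} follows by combining this with the interior $H^4$ estimate on $[\frac{L_0}{5}, \frac{L_0}{10}] \times \mathbb{E}$.

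The principal technical obstacle is verifying the compatibility condition $(n_2\partial_2 + n_3\partial_3)g[\hat w] = 0$ on both faces $\{L_0\}\times \partial\mathbb{E}$ and $\{\frac{L_0}{10}\}\times \partial\mathbb{E}$ required for Lemma \ref{li50} with $\varsigma = 2$. At $x_1 = \frac{L_0}{10}$ the cut-off annihilates every term (including those from $\mathcal{C}_\chi\psi$, since $\chi', \chi''$ vanish there as well). At $x_1 = L_0$ one must expand $(n_2\partial_2 + n_3\partial_3)$ acting on each product $(k_{ij} - \delta_{ij})\partial_{ij}^2\hat w$ and on the source $F$ itself, and then invoke the wall compatibilities \eqref{coe103}--\eqref{coe104} together with the wall condition on $\hat w$; this is precisely where the one-order-higher regularity on $k_{ij}$ compared to the 2D analysis is consumed, and the cancellations have to be tracked carefully so that $g[\hat w]$ stays inside $H^2_{cp}(\Omega_{1/10})$ after each iteration.
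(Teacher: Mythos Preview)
Your overall strategy---localize by a cut-off, rewrite the principal part as the separable operator of Lemma~\ref{li50}, and run a two-stage Banach fixed point ($\varsigma=1$ then $\varsigma=2$)---is exactly the paper's approach. The compatibility checks you flag at $x_1=L_0$ are the right ones, and your use of interior-in-$x_1$ elliptic regularity on the strip $[\tfrac{L_0}{5},\tfrac{L_0}{10}]\times\mathbb{E}$ to control the cut-off commutators is legitimate.

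There is, however, a genuine gap in your iteration. In your source
\[
g[\hat w]=\chi F+\chi(\bar k_{11}'-\bar k_1)\partial_1\hat w-\chi(k_{11}-\bar k_{11})\partial_1^2\hat w-\cdots
\]
the first-order term $\chi(\bar k_{11}'-\bar k_1)\partial_1\hat w$ depends on the \emph{iterate} $\hat w$, but its coefficient $\bar k_{11}'-\bar k_1$ is a background quantity of size $O(1)$, not $O(\epsilon+\delta_0)$. Consequently
\[
\|T\hat w_1-T\hat w_2\|_{H^{2+\varsigma}}\le C_*\|\partial_1(\hat w_1-\hat w_2)\|_{H^{\varsigma}}+C_*(\epsilon+\delta_0)\|\hat w_1-\hat w_2\|_{H^{2+\varsigma}},
\]
and the first term on the right is \emph{not} small in the $H^{2+\varsigma}$ norm, so $T$ is not a contraction as you claim. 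Your own a~priori estimate already hints at the fix: the bound you wrote carries $C_*\|\psi\|_{H^{1+\varsigma}}$ rather than $C_*\|\hat w\|_{H^{1+\varsigma}}$, which only makes sense if that first-order term is frozen at the \emph{known} solution $\psi$. This is precisely what the paper does: it splits the source as $J_1(\psi)+J_2(\tilde v)$, with the $O(1)$ piece $\eta(\bar k_{11}'-\bar k_1)\partial_1\psi$ placed in $J_1$ and only the genuinely small second-order perturbations $(k_{11}-\bar k_{11})\partial_1^2\tilde v$, $k_{1i}\partial_{1i}^2\tilde v$, $(k_{ij}-\delta_{ij})\partial_{ij}^2\tilde v$ left in $J_2$. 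With that correction your two stages go through: for $\varsigma=1$ one needs $\psi\in H^2(\Omega_{1/10})$, supplied by \eqref{H1}; for $\varsigma=2$ one needs $\psi\in H^3$ near the entrance, supplied by the $\varsigma=1$ stage (the paper shrinks the domain slightly for this step).

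Two minor remarks. First, the uniqueness you invoke to identify the fixed point with $\chi\psi$ is not Corollary~\ref{uniqueness} (which concerns the mixed-type problem on $\Omega$) but the elementary uniqueness for the elliptic Dirichlet--Neumann problem on $\Omega_{1/10}$, or equivalently the uniqueness of the contraction fixed point. Second, the ``one-order-higher regularity on $k_{ij}$'' is actually consumed in the transonic estimates (see \eqref{W105}), not in the subsonic compatibility check here; the verification of $\partial_{\mathbf n}g=0$ at $\{L_0\}\times\partial\mathbb{E}$ relies only on the structural identities in Lemma~\ref{coe1}.
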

\begin{proof}

Let $\eta$ be a smooth cut-off function such that $0 \le \eta (x) \le 1$ on $[L_0, L_1]$ and
\be\no
\eta(x_1) = \begin{cases}
                 1, & L_0 \le x_1 \le \f {3}{20} L_0, \\
                 0, & \f 1{10} L_0 \le x_1 \le L_1.
               \end{cases}
\ee
Set $\psi_c= \eta \psi$. Then $\psi_c$ solves
\be\label{li6}\begin{cases}
\displaystyle\sum_{i,j=1}^3 k_{ij} \p_{ij}^2 \psi_c=\eta (F-\bar{k}_1 \p_1\psi)+ 2\eta' \sum_{i=1}^3 k_{1i}\p_i\psi + k_{11}\eta''\psi,\ \text{in }\ \Om_{1/10},\\
\psi_c(L_0,x')=\psi_c(\f1{10} L_0,x')=0, \ \q \forall x'\in \mb{E},\\
(n_2\p_2 + n_3 \p_3) \psi_c(x_1,x')=0,\ \q \text{on }[L_0,\frac{1}{10}L_0]\times \p\mb{E}.
\end{cases}\ee

The problem \eqref{li6} can be rewritten as
\be\label{li61}\begin{cases}
\p_1(\bar{k}_{11}\p_1\psi_c) + \p_2^2 \psi_c + \p_3^2\psi_c =J_1(\psi)+ J_2(\psi_c),\ \ \text{in }\ \Om_{1/10},\\
\psi_c(L_0,x')=\psi_c(\f1{10} L_0,x')=0, \ \ \ \forall x'\in \mb{E},\\
(n_2\p_2 + n_3 \p_3) \psi_c(x_1,x')=0,\ \ \text{on }[L_0,\frac{1}{10}L_0]\times \p\mb{E},
\end{cases}\ee
where
\be\no
&&J_1(\psi)= \eta(F+ (\bar{k}_{11}'-\bar{k}_{1})\p_1\psi)+\eta'\bigg(2\sum_{i=1}^3 k_{1i}\p_i \psi + \bar{k}_{11}'\psi\bigg)+\eta'' k_{11}\psi,\\\no
&&J_2(\psi_c)=-(k_{11}-\bar{k}_{11})\p_1^2 \psi_c- 2\sum_{i=2}^3 k_{1i}\p_{1i}^2 \psi_c- \sum_{i,j=2}^3 (k_{ij}-\delta_{ij}) \p_{ij}^2 \psi_c.
\ee
Thanks to \eqref{H1}, $J_1(\psi)\in H^1(\Omega_{1/10})$ and
\be\no
\|J_1(\psi)\|_{H^1(\Omega_{1/10})}\leq C_*(\|F\|_{H^1(\Om)}+\|\psi\|_{H^2(\Omega)})\leq C_*\|F\|_{H^1(\Omega)}.
\ee

Define
\be\no
\mc{X}_3=\bigg\{v\in H^3(\Om_{1/10}): v(L_0,x')=v(\frac{1}{10}L_0,x')=0 \ \text{on }\mb{E}; \\\no
\q\q\q\q\q \q (n_2\p_2+n_3\p_3) v(x_1,x')=0 \ \text{on }[L_0,\frac1{10} L_0]\times \p\mb{E}\bigg\}.
\ee
For any given $\tilde{v}\in\mc{X}_3$, consider the boundary value problem
\be\label{li62}\begin{cases}
\p_1(\bar{k}_{11}\p_1 v) + \p_2^2 v + \p_3^2 v = J_1(\psi)+ J_2(\tilde{v}),\ \ &\text{in }\ \Om_{1/10},\\
v(L_0,x')=v(\f1{10} L_0,x')=0, \ \ \ &\forall x'\in \mb{E},\\
(n_2\p_2 + n_3 \p_3) v(x_1,x')=0,\ \ &\text{on }[L_0,\frac{1}{10}L_0]\times \p\mb{E}.
\end{cases}\ee

To obtain $H^3$ estimate for $v$, one needs to show that both $J_1(\psi)$ and $J_2(\ti{v})$ belong to $H_{cp}^1(\Om_{1/10})$. Note that one needs only to consider the case that $\psi,\ti{v}\in C^{\infty}(\ol{\Om_{1/10}})$, since the general case follows by a density argument. Thus it suffices to verify \eqref{gcp} at $x_1=L_0,\frac{1}{10} L_0$ for both $J_1(\psi)$ and $J_2(\ti{v})$. Here we only deal with the case $x_1=L_0$, the other case $x_1=\frac{1}{10} L_0$ can be verified similarly. Notes that
\be\no
&&J_1(\psi)(L_0,x')= F(\nabla \psi)(L_0,x')+ (\bar{k}_{11}'-\bar{k}_1)\p_1\psi(L_0,x'),\\\no
&&J_2(\ti{v})(L_0,x')=-\bigg((k_{11}(\nabla(\psi+\epsilon\psi_0))-\bar{k}_{11})\p_1^2\ti{v}- 2\sum_{i=2}^3 k_{1i}(\nabla(\psi+\epsilon\psi_0))\p_{1i}^2 \ti{v}\bigg)(L_0,x').
\ee
Recall that $\p_{{\bf n}}=\sum_{i=2}^3 n_i \p_i$, then on $\p\mb{E}$ there holds
\be\no
&&\p_{{\bf n}}\{J_1(\psi)(L_0,x')\}=\p_{{\bf n}}\{F(\nabla\psi)\}(L_0,x')\\\no
&&=-\frac{\bar{u}'}{c^2(\bar{\rho})}\bigg((\gamma-1)\epsilon \p_{{\bf n}} \Phi_0-\f{\gamma+1}{2}\p_{{\bf n}}((\p_1\psi)^2)-\frac{(\gamma-)\epsilon^2}{2}\p_{{\bf n}}(|\nabla'h_0|^2)\bigg)(L_0,x')\\\no
&&\q-\frac{\epsilon}{c^2(\bar{\rho})}\bigg(\p_{{\bf n}}((\bar{u}+\p_1\psi)\p_1\Phi_0)+ \epsilon\sum_{i=2}^3 \p_{{\bf n}}(\p_i\Phi_0 \p_i h_0)\bigg)(L_0,x')=0,
\ee
and
\be\no
&&\p_{{\bf n}}\{J_2(\ti{v})(L_0,x')\}\\\no
&&=-\bigg\{(k_{11}(\nabla(\psi+\epsilon\psi_0))-\bar{k}_{11})\p_1^2\p_{{\bf n}}\ti{v}+\p_{{\bf n}}\{k_{11}(\nabla(\psi+\epsilon\psi_0))\}\p_1^2 \ti{v}\bigg\}(L_0,x')\\\no
&&\q\q-\frac{2\epsilon}{c^2(\bar{\rho})} \sum_{j=2}^3\bigg(\p_1\p_{{\bf n}}(\psi+\epsilon \psi_0) \p_j h_0 \p_{1j}^2 \ti{v}+(\bar{u}+\p_1\psi)(\p_{{\bf n}}\p_j h_0\p_{1j}^2 \ti{v}\\\no
&&\q\q\q\q+\p_j h_0 \p_{{\bf n}} \p_{1j}^2 \ti{v})\bigg)(L_0,x')=0.
\ee

It follows from Lemma \ref{li50} for $\varsigma=1$ that there exists a unique solution $v\in \mc{X}_3$ to \eqref{li62} such that
\be\label{li621}
\|v\|_{H^3(\Om_{1/10})}&\leq& C(L_0,\mb{E})(\|J_1(\psi)\|_{H^1(\Om_{1/10})}+\|J_2(\tilde{v})\|_{H^1(\Om_{1/10})})\\\no
&\leq& C_*(\|F\|_{H^1(\Om)}+(\epsilon+\delta_0)\|\tilde{v}\|_{H^3(\Om_{1/10})}).
\ee
Thus one can define an operator $\mc{T}$ mapping $\mc{X}_3$ to itself as $\mc{T}(\tilde{v})= v$. Note that the mapping $\mc{T}$ is a contraction. Indeed, for any $\tilde{v}_m\in \mc{X}_3, m=1,2$, denote $v_m=\mc{T}(\tilde{v}_m)$ and $V=v_1-v_2, \tilde{V}=\tilde{v}_1-\tilde{v}_2$. Then
\be\no\begin{cases}
\p_1(\bar{k}_{11}\p_1 V) + \p_2^2 V + \p_3^2 V = J_2(\tilde{V}),\ \ &\text{in }\ \Om_{1/10},\\
V(L_0,x')=V(\f1{10} L_0,x')=0, \ \ \ &\forall x'\in \mb{E},\\
(n_2\p_2 + n_3 \p_3) V(x_1,x')=0,\ \ &\text{on }[L_0,\frac{1}{10}L_0]\times \p\mb{E}.
\end{cases}\ee
Since $J_2(\ti{v})$ belongs to $H^1_{cp}(\Om_{1/10})$ as for \eqref{li621}, so Lemma \ref{li50} yields
\be\no
&&\|V\|_{H^3(\Om_{1/10})}\\\no
&&\leq C(L_0,\mb{E})\bigg(\|k_{11}-\bar{k}_{11}\|_{H^2(\Om)}+2\sum_{i=2}^3\|k_{1i}\|_{H^2(\Om)}+\sum_{i,j=2}^3 \|k_{ij}-\delta_{ij}\|_{H^2(\Om)}\bigg)\|\tilde{V}\|_{H^3(\Omega_{1/10})}\\\no
&&\leq C(L_0,\mb{E})(\epsilon + \delta_0)\|\tilde{V}\|_{H^3(\Omega_{1/10})}\leq \f12 \|\tilde{V}\|_{H^3(\Omega_{1/10})}
\ee
provided $\epsilon+\delta_0$ is small enough. By the Banach contraction mapping theorem, there exists a unique fixed point $\bar{v}\in \mc{X}_3$ such that $\mc{T}(\bar{v})=\bar{v}$. It follows from \eqref{li621} that
\be\no
\|\bar{v}\|_{H^3(\Om_{1/10})}\leq C_*\|F\|_{H^1(\Om)}
\ee
provided $\epsilon+\delta_0$ is small enough.

The definition of $\mc{T}$ implies that $\bar{v}$ solves \eqref{li61}. By the uniqueness of the solution to \eqref{li61}, one has $\psi_c=\bar{v}\in H^3(\Om_{1/10})$. Since $\psi_c=\eta \psi$ and $\Om_{3/20}\subset \Om_{1/10} (L_0<0)$, then
\be\label{li64}
\|\psi\|_{H^3(\Om_{3/20})}\leq \|\psi_c\|_{H^3(\Om_{1/10})}=\|v\|_{H^3(\Om_{1/10})}\leq C_*\|F\|_{H^1(\Om)},
\ee
where $\Om_{3/20}:=(L_0,\frac{3}{20}L_0)\times \mb{E}$. Thanks to \eqref{li64}, $J_1(\psi)\in H^2((L_0,\frac{3}{20}L_0)\times \mb{E})$ and
\be\no
\|J_1(\psi)\|_{H^2((L_0,\frac{3}{20}L_0)\times \mb{E})}\leq C_*(\|F\|_{H^2(\Om)}+\|\psi\|_{H^3((L_0,\frac{3}{20}L_0)\times \mb{E})})\leq C_*\|F\|_{H^2(\Om)}.
\ee

Now, define
\be\no
\mc{X}_4=\bigg\{v\in H^4(\Om_{3/20}): v(L_0,x')=v(\frac{3}{20}L_0,x')=0 \ \text{on }\mb{E}; \\\no
\q\q\q\q\q\q (n_2\p_2+n_3\p_3) v(x_1,x')=0 \ \text{on }[L_0,\frac3{20} L_0]\times \p\mb{E}\bigg\}.
\ee
Similar to \eqref{li62}, for any given $\ti{v}\in \mc{X}_4$, consider the following problem in $\Om_{3/20}$:
\be\label{li66}\begin{cases}
\p_1(\bar{k}_{11}\p_1 v) + \p_2^2 v + \p_3^2 v = J_1(\psi)+ J_2(\tilde{v}),\ \ &\text{in }\ \Om_{3/20},\\
v(L_0,x')=v(\f3{20} L_0,x')=0, \ \ \ &\forall x'\in \mb{E},\\
(n_2\p_2 + n_3 \p_3) v(x_1,x')=0,\ \ &\text{on }[L_0,\frac{1}{10}L_0]\times \p\mb{E}.
\end{cases}\ee

As for \eqref{li62}, one can verify that $J_1(\psi)+ J_2(\tilde{v})$ belongs to $H_{cp}^2(\Om_{3/20})$. By Lemma \ref{li50}, there exists a unique solution $v\in H^4(\Om_{3/20})$ to the problem \eqref{li66}, which defines a mapping $\mc{T}$ from $\mc{X}_4$ to itself. The fixed point $\bar{v}\in \mc{X}_4$ of $\mc{T}$ coincides with $\psi_c$ on $\Om_{3/20}$. Finally, it is clear that $\psi\in H^4((L_0,\frac{1}{5}L_0)\times \mb{E})$ with the estimate
\be\no
\|\psi\|_{H^4((L_0,\frac{1}{5}L_0)\times \mb{E})}\leq C_*\|F\|_{H^2(\Omega)}.
\ee
\end{proof}

\subsection{The $H^4$ estimate in transonic regions}
Now we are in a position to improve the regularity of $\psi$ on the transonic region $[\frac15 L_0, L_1]\times \mb{E}$. To this end, one can investigate further the singular perturbation problem \eqref{au2} for $\Psi^{\sigma}$ on $\Om_2$. The following lemma gives an $L^2$ estimate for $\na \p_{1}^2 \Psi^{\sigma}$ on the subregion $(\f7{10} L_0, L_1 + 14\ell) \times \mb{E}$, uniformly in $\sigma$.
\begin{lemma}\label{Ha3}
Under the assumptions of Lemma \ref{aH2}, the classical solution to \eqref{au2} satisfies
  \be\label{Ha31}
  \int_{\f 7{10} L_0}^{L_1 + 14\ell} \iint_{\mb{E}} \si |\p_{1}^4 \Psi^{\si}|^2 + |\na \p_{1}^2 \Psi^{\si}|^2 dx' dx_1  \le C_{\sharp} \| G\|_{H^2(\Om_2)}^2,
  \ee
  where the positive constant $C_\sharp$ depends only on the $C^3(\overline{\Om_2})$ norms of $a_{ij}$, $\bar{a}_1$.
\end{lemma}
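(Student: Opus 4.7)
The plan is to mimic the derivation of the $L^2$ estimate for $\nabla \partial_1 \Psi^\sigma$ in Lemma \ref{aH2} one derivative higher. Set $W_2 \co \partial_1 W_1 = \partial_1^2 \Psi^\sigma$. Differentiating the first equation of \eqref{au2} once more in $x_1$, I would obtain an equation of the schematic form
\be\no
\sigma \p_1^3 W_2 + \sum_{i,j=1}^3 a_{ij}\p_{ij}^2 W_2 + (\bar{a}_1 + 2\p_1 a_{11})\p_1 W_2 + 4\sum_{j=2}^3 \p_1 a_{1j}\p_j W_2 + \sum_{i,j=2}^3 \p_1^2 a_{ij}\p_{ij}^2 \Psi^\sigma + R_2 = \p_1^2 G,
\ee
where $R_2$ collects lower order terms involving at most $\nabla W_1$ and $\nabla \Psi^\sigma$ against $C^2$--coefficients. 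The boundary conditions $W_2(L_0,x')=0$ and $(n_2\p_2+n_3\p_3)W_2=0$ on $[L_0,L_2]\times\p\mb{E}$ are inherited directly from \eqref{au2} by differentiation.

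Next I would choose a monotone cut-off function $\eta_5\in C^\infty([L_0,L_2])$ with $\eta_5\equiv 1$ on $[\tfrac{7}{10}L_0, L_1+14\ell]$ and $\eta_5\equiv 0$ outside $[\tfrac{4}{5}L_0, L_1+16\ell]$, multiply the $W_2$--equation by $\eta_5^2 d(x_1)\p_1 W_2$, and integrate by parts over $\Omega_2$. Exactly as in the proof of Lemma \ref{aH2}, the principal part produces the combination
\be\no
\eta_5^2 d(\bar{a}_1 + 2\p_1 a_{11}) - \tfrac12\p_1(\eta_5^2 d a_{11}) - \eta_5^2 d\sum_{i=2}^3 \p_i a_{1i},
\ee
which by \eqref{8}--\eqref{9} and the smallness of $\epsilon+\delta_0$ remains bounded below by $3\eta_5^2$ modulo an $\eta_5\eta_5'$ term absorbed via \eqref{aH21}; the cross term in $x'$ produces $\eta_5^2\sum_{i,j=2}^3 a_{ij}\p_i W_2\p_j W_2$, which is coercive by Lemma \ref{coe2}.

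The delicate term is
\be\no
\iiint_{\Om_2}\eta_5^2 d \,\p_1 W_2 \sum_{i,j=2}^3 \p_1^2 a_{ij}\,\p_{ij}^2 \Psi^\sigma \,dx'dx_1,
\ee
which cannot be dominated by $\|\nabla W_2\|_{L^2}^2$ directly. I would handle it exactly as in \eqref{W105}: integrate by parts once in $\p_i$ and once in $\p_1$, using \eqref{coe203} with $m=2$ at the wall $\Ga_w$ and the vanishing of $\eta_5$ at the two endpoints $x_1=L_0, L_2$, to rewrite this integral as a sum of expressions involving only $\p_1^3 a_{ij}$, $\p_{1i}^2 a_{ij}$, $\nabla \Psi^\sigma$, $\nabla W_1$ and $\nabla W_2$. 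This is precisely where one order more regularity on the coefficients than in Lemma \ref{aH2} is used, so the constant depends on $\|a_{ij}\|_{C^3(\ol{\Om_2})}$.

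Collecting the estimates, absorbing the small prefactor terms using $\epsilon+\delta_0\ll 1$, and bounding the right-hand sides (the forcing $\|\p_1^2 G\|_{L^2}^2$, the $\eta_5 \eta_5'$ contributions supported away from the target set, and all $R_2$--type terms) via Lemma \ref{aH2} and \eqref{aH21} would yield the desired estimate \eqref{Ha31}, with the $\si$--weighted $\p_1^4\Psi^\si$ contribution arising from $\iiint -\si\eta_5^2 d (\p_1^2 W_2)^2$ after moving the dissipation term through the multiplier. I expect the main obstacle to be the careful bookkeeping of the double integration by parts in the trouble term, especially verifying that all the boundary integrals on $\Ga_w$ and at $x_1=L_0, L_2$ either vanish (by \eqref{coe203} with $m=2$ and the support of $\eta_5$) or are controllable by constants depending on $\|a_{ij}\|_{C^3(\ol{\Om_2})}$.
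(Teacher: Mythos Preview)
Your overall strategy---differentiate once more, multiply the $W_2$--equation by $\eta^2 d\,\p_1 W_2$, and handle the trouble terms by the double integration by parts as in \eqref{W105}---is the same as the paper's. However, there is a genuine gap: you have not explained how to control the $\eta_5\eta_5'$ contributions, and your claim that they are ``absorbed via \eqref{aH21}'' is incorrect.

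Concretely, when you integrate by parts the term $\sum_{i,j=2}^3 a_{ij}\p_{ij}^2 W_2$ against $\eta_5^2 d\,\p_1 W_2$, you produce
\[
\tfrac12\iiint_{\Om_2}(\eta_5^2 d)'\sum_{i,j=2}^3 a_{ij}\,\p_i W_2\,\p_j W_2\,dx'dx_1,
\]
which contains $\eta_5\eta_5' d\sum a_{ij}\p_i W_2\p_j W_2$ supported on $\{\eta_5'\neq 0\}\subset[\tfrac45 L_0,\tfrac7{10}L_0]\cup[L_1+14\ell,L_1+16\ell]$. This requires a bound on $|\nabla_{x'}W_2|^2=|\nabla_{x'}\p_1^2\Psi^\sigma|^2$ in those strips. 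Lemma~\ref{aH2} and \eqref{aH21} give you only $|\nabla W_1|^2$ and $\sigma|\p_1^3\Psi^\sigma|^2$ there---nothing about $|\nabla_{x'}W_2|^2$. The same issue arises with the $\eta_5\eta_5' a_{11}d\,(\p_1 W_2)^2$ term coming from the principal part: \eqref{aH21} controls only $\sigma(\p_1 W_2)^2$, not $(\p_1 W_2)^2$ itself.

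The paper fills this gap by a preliminary \emph{elliptic} step: before running the multiplier argument on $W_2$, it multiplies the $W_1$--equation \eqref{W1} by $\eta_m^2\,\p_1^2 W_1$ with two auxiliary cut-offs $\eta_5,\eta_6$ supported in subsonic strips where $a_{11}\geq \kappa_1>0$. This yields local control of $|\nabla\p_1 W_1|^2=|\nabla W_2|^2$ there (estimate \eqref{Ha32}). Only then does the paper introduce a third cut-off $\eta_7$ whose derivative is supported \emph{inside} the region where \eqref{Ha32} already applies, and runs the multiplier argument for $W_2$. Your single cut-off collapses this nested structure, leaving the $\eta_5\eta_5'$ terms uncontrolled. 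A minor related point: the $W_2$--equation also carries a term $2\sum\p_1 a_{ij}\p_{ij}^2 W_1$ (second derivatives of $W_1$, not first), which needs the same integration-by-parts treatment as the term you singled out; it is not a lower-order remainder $R_2$.
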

\begin{proof}
  Define smooth cut-off functions $0 \le \eta_m (x) \le 1$ on $[L_0, L_2]$ for $m= 5, 6$ satisfying
  \be\no
  \eta_5 (x_1) = \begin{cases}
                 0, & L_0 \le x_1 \le \f 45 L_0, \\
                 1, & \f 3{4} L_0 \le x_1 \le \f 7{10} L_0, \\
                 0, & \f {13}{20} L_0 \le x_1 \le L_2,
               \end{cases}
  \quad
  \eta_6 (x_1) = \begin{cases}
                 0, & L_0 \le x_1 \le L_1 + 13 \ell, \\
                 1, & L_1 + 14 \ell \le x_1 \le L_1 + 15 \ell, \\
                 0, & L_1 + 16 \ell \le x_1 \le L_2.
               \end{cases}
  \ee

Multiplying \eqref{W1} by $\eta_m^2 \p_1^2 W_1$ for $m=5,6$ respectively and integrating over $\Om_2$ yield
\be\no
&&\iiint_{\Om_2}\eta_m^2 a_{11}(\p_1^2 W_1)^2 +\eta_m^2 \sum_{i,j=2}^3 a_{ij}\p_{1i}^2 W_1 \p_{1j}^2 W_1  dx' dx_1\\\no
&&=\iiint_{\Om_2} \bigg\{\eta_m^2 \p_1^2 W_1 (\p_1 G-\bar{a}_1'W_1-(\bar{a}_1+\p_1 a_{11})\p_1 W_1-2\sum_{i=2}^3 \p_1a_{1i}\p_i W_1)\\\no
&&\q\q-2\eta_m^2 \p_1^2 W_1 \sum_{i=2}^3 a_{1i}\p_{1i}^2 W_1- \eta_m^2 \sum_{i,j=2}^3 \bigg(2\p_1 a_{ij}\p_{1i}^2 W_1 -\p_i a_{ij}\p_1^2 W_1\bigg)\p_j W_1\\\no
&&\quad \q- \eta_m^2\sum_{i,j=2}^3 \bigg(\p_1^2 a_{ij}\p_j\Psi \p_{1i}^2 W_1-\p_{1i}^2 a_{ij}\p_j\Psi \p_1^2 W_1\bigg) \\\no
&&\quad\q- 2\eta_m\eta_m' \sum_{i,j=2}^3 (a_{ij}\p_j W_1 -\p_1 a_{ij}\p_j\Psi)\p_{1i}^2 W_1 +\si \eta_m\eta_m' (\p_1^2 W_1)^2\bigg\} dx' dx_1,
\ee
where integration by parts and Lemma \ref{coe2} have been used. Then there holds
\be\no
&&\iiint_{\Om_2}\eta_m^2 |\nabla \p_1 W_1|^2 dx' dx_1\\\no
&&\leq C_*\iiint_{\Om_2}\bigg\{\eta_m^2(|\p_1 G|^2+W_1^2+|\p_1 W_1|^2+\sum_{i=1}^3|\p_1 a_{1i}|^2 |\p_i W_1|^2)\\\no
&&\q+\si |\eta_m'|^2 |\p_1^3\Psi|^2 + \eta_m^2 \sum_{i=2}^3 |a_{1i}||\p_1^2 W_1||\p_{1i}^2 W_1|+ \eta_m^2\sum_{i,j=2}^3|\nabla a_{ij}|^2|\p_j W_1|^2\\\no
&&\q+ \sum_{i,j=2}^3 \eta_m^2|\p_1\nabla a_{ij}|^2|\p_j \Psi|^2 + |\eta_m'|^2 (|a_{ij}|^2|\p_j W_1|^2+ |\p_1 a_{ij}|^2 |\p_j\Psi|^2)\bigg\}\\\label{Ha32}
&&\leq C_{\sharp} \iiint_{\Om_2} (G^2 + |\na G|^2) dx' dx_1,
\ee
where one has used Lemmas \ref{coe2} and \ref{aH2}, and the facts that $a_{11} \ge \ka_1 >0$ on the supports of $\eta_m$ for $m = 5, 6$, while the supports of $\eta_m' (x_1)$ are contained in $[\f 45 L_0, L_1 + 16\ell]$ for $m = 5,6$.

Set $W_2=\p_1 W_1$. Then it follows from \eqref{W1} that $W_2$ solves
\be\label{W2}\begin{cases}
\sigma \p_1^3 W_2 + \sum_{i,j=1}^3 a_{ij} \p_{ij}^2 W_2 + (\bar{a}_1+2\p_1 a_{11})\p_1 W_2\\\q \q+\sum_{i,j=2}^3\bigg(2\p_1 a_{ij}\p_{ij}^2 W_1 + \p_1^2 a_{ij}\p_{ij}^2 \Psi\bigg)+ 4 \sum_{i=2}^3 \p_1 a_{1i}\p_i W_2=G_2,\\
W_2(L_0,x')=0,\ \ \forall x'\in\mb{E},\\
(n_2\p_2+n_3\p_3) W_2(x_1,x')=0,\ \ \ \forall (x_1,x')\in [L_0,L_2]\times \p\mb{E},
\end{cases}\ee
where
\be\no
G_2:=\p_1^2 G-(2\bar{a}_1'+\p_1^2 a_{11}) W_2 -2\sum_{i=2}^3 \p_1^2 a_{1i} \p_i W_1 -\bar{a}_1'' W_1.
\ee

Define a smooth cut-off function $0 \le \eta_7 (x_1) \le 1$ on $[L_0, L_2]$ satisfying
\be\no
\eta_7 (x_1) = \begin{cases}
               0, & L_0 \le x_1 \le \f 3{4} L_0, \\
               1, & \f {7}{10} L_0 \le x_1 \le L_1 + 14 \ell, \\
               0, & L_1 + 15 \ell \le x_1 \le L_2.
             \end{cases}
\ee
Multiplying the equation in $\eqref{W2}$ by $\eta_7^2 d(x_1) \p_{1} W_2$, integrating the resulting identity over $\Om_2$ and integration by parts yield
\be\no
&&\iiint_{\Om_2}\bigg\{ -\si \eta_7^2 d(\p_1^2 W_2)^2-\si (\eta_7^2 d)'\p_1 W_2\p_1^2 W_2 + \frac{1}{2}(\eta_7^2 d)'\sum_{i,j=2}^3 a_{ij}\p_i W_2\p_j W_2\\\no
&&\q+\bigg(\eta_7^2 d(\bar{a}_1+2\p_1 a_{11})-\frac12\p_1(\eta_7^2 d a_{11})-\eta_7^2 d \sum_{i=2}^3 \p_i a_{1i}\bigg)(\p_1 W_2)^2\\\label{h311}
&&\q+\eta_7^2 d\sum_{i,j=2}^3 \bigg(\frac{5}{2}\p_1 a_{ij}\p_j W_2+ 3 \p_1^2 a_{ij}\p_j W_1 + \p_1^3 a_{ij}\p_j\Psi\bigg)\p_i W_2\\\no
&&\q- \eta_7^2 d \p_1 W_2\bigg(\sum_{i,j=2}^3 \big(\p_i a_{ij}\p_j W_2+2\p_{1i}^2 a_{ij}\p_j W_1 +\p_{1}^2\p_i a_{ij}\p_j \Psi\big) +\sum_{i=2}^3 4\p_1 a_{1i} \p_i W_2\bigg)\\\no
&&\q+ (\eta_7^2 d)' \sum_{i,j=2}^3 (2\p_1 a_{ij}\p_j W_1  + \p_1^2 a_{ij}\p_j \Psi)\p_i W_2\bigg\}=\iiint_{\Om_2} \eta_7^2 d \p_1 W_2 G_2 dx' dx_1,
\ee
where \eqref{coe203} for $m=0,1,2$ has been used. Note that
\be\no
\eta_7^2 d [(\bar{a}_1+2\p_{1} a_{11}) -\sum_{i=2}^3\p_i a_{1i}]- \f 12 \p_{1} (\eta^2_7 d a_{11})\geq 3 \eta_7^2 - \eta_7 \eta_7' a_{11} d,\forall (x_1, x') \in \Om_2.
\ee

Then one may conclude from \eqref{h311} that
\be\no
&&\iiint_{\Om_2}\eta_7^2 (\si (\p_1^2 W_2)^2+ 3|\nabla W_2|^2)\leq\iiint_{\Om_2}|\eta_7'|^2(\si |\p_1 W_2|^2 + |\nabla W_2|^2)\\\no
&&+\iiint_{\Om_2}\bigg\{\eta_7^2(|\p_1^2 G|^2+ \sum_{i=2}^3 |\p_1^2 a_{1i}|^2|\p_i W_1|^2+ W_1^2+ W_2^2)+\eta_7^2 |\nabla W_2|^2\sum_{i=2}^3 |\p_1 a_{1i}|\\\no
&&\q+\eta_7^2 \sum_{i,j=2}^3|\nabla a_{ij}|^2|\p_jW_2|^2+|\p_1\nabla a_{ij}|^2|\p_j W_1|^2+|\p_1^2\nabla a_{ij}|^2|\p_j \Psi|^2 \\\label{Ha33}
&&\q+(\eta_7^2+|\eta_7'|^2)\sum_{i,j=2}^3 (|\p_1 a_{ij}|^2 |\p_j W_1|^2 + |\p_1^2 a_{ij}|^2 |\p_j\Psi|^2)+\eta_7^2|\na W_2|^2\bigg\}.
\ee
Since the support of $\eta_7' (x_1)$ is contained in $[\f 34 L_0, \f 7{10} L_0] \cup [L_1 + 14 \ell, L_1 + 15 \ell]$, we use \eqref{aH21} and \eqref{Ha32} to control the first term on the right hand side of \eqref{Ha33}. Then \eqref{Ha31} follows from \eqref{aH1},\eqref{aH21} and \eqref{Ha33}.

\end{proof}

The following lemma gives the $L^2$ estimate for $\na \p_{1}^3 \Psi^{\sigma}$ on the subregion $(\f 35 L_0, L_1 + 12\ell) \times \mb{E}$.
\begin{lemma}\label{Ha4}
Under the assumptions of Lemma \ref{aH2}, the classical solution to \eqref{au2} satisfies
\be\label{Ha41}
\int_{\f 35 L_0}^{L_1 + 12 \ell} \iint_{\mb{E}} \si|\p_{1}^5 \Psi^{\si}|^2 +|\na \p_{1}^3 \Psi^{\si}|^2 dx' dx_1\leq C_{\sharp} \|G\|_{H^3(\Om_2)}^2,
\ee
where $C_\sharp>0$ depends only on the $C^4(\overline{\Om_2})$ norms of $a_{ij}, i,j=1,2,3$ and $\bar{a}_1$.

\end{lemma}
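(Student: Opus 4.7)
The plan is to mimic the strategy of Lemmas \ref{aH2} and \ref{Ha3}, pushing the hierarchy of tangential derivative estimates one step further. Set $W_3 \co \p_1 W_2 = \p_1^3 \Psi^{\si}$. Differentiating the equation \eqref{W2} for $W_2$ once in $x_1$, one obtains a linear equation of the form
\[
\si \p_1^3 W_3 + \sum_{i,j=1}^3 a_{ij}\p_{ij}^2 W_3 + (\bar{a}_1 + 3\p_1 a_{11}) \p_1 W_3 + 6\sum_{i=2}^3 \p_1 a_{1i}\p_i W_3 + R_3 = \p_1 G_2,
\]
where $R_3$ collects all the remaining terms containing at most two derivatives of $W_1$ and three derivatives of $\Psi^{\si}$, multiplied by $\p_1^k a_{ij}$ with $k \le 3$. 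The boundary conditions for $W_3$ on $\Ga_w$ coincide with those of $W_1, W_2$ (the Neumann trace vanishes), and at $x_1=L_0$ we have $W_3(L_0,\cdot)=0$ as a consequence of the original boundary conditions $\Psi^{\si}(L_0,\cdot)=\p_1^2\Psi^{\si}(L_0,\cdot)=0$ together with the equation at $x_1=L_0$.

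First I would establish an intermediate $L^2$ estimate for $\p_1^2 W_2 = \p_1 W_3$ on a region slightly larger than the target $(\f35 L_0, L_1+12\ell)\times\mb{E}$ by testing the $W_2$--equation \eqref{W2} against $\eta_*^2 \p_1^2 W_2$ with a smooth cut-off $\eta_*$ supported in $[\f{13}{20} L_0, L_1 + 13\ell]$ and equal to $1$ on $[\f 35 L_0, L_1 + 12\ell]$, using Lemma \ref{coe2} (in particular \eqref{coe203} with $m=0,1$) to kill the $\Ga_w$ boundary terms, the uniform positivity $a_{11} \ge \ka_1$ on $\{x_1\le \f{13}{20}L_0\}$, and the matrix $(a_{ij})_{i,j=2}^3 \ge \f12 I$ to absorb the $\nabla_{x'} \p_1 W_2$ terms. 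The right-hand side is controlled by $\|G\|_{H^2}$ via \eqref{aH1}, \eqref{aH21}, \eqref{Ha31}. Then I would repeat the ``ABC'' multiplier argument as in Lemma \ref{Ha3}: with a new cut-off $\wti{\eta}\in C^\infty([L_0,L_2])$ equal to $1$ on $[\f35 L_0, L_1+12\ell]$ and vanishing outside $[\frac{13}{20}L_0, L_1+13\ell]$, multiply the $W_3$--equation by $\wti{\eta}^2 d(x_1) \p_1 W_3$ and integrate over $\Om_2$. Using \eqref{coe203} with $m=0,1,2,3$ (the case $m=3$ being precisely what forces the $C^4$ regularity on $a_{ij}$) to eliminate the $\Ga_w$ boundary contributions coming from $\p_1 W_3 \sum_{i,j=2}^3 a_{ij}\p_{ij}^2 W_3$ and from the $R_3$--integration-by-parts of the type displayed in \eqref{W105}, and invoking \eqref{9} with $j=3$ to obtain the lower bound
\[
\wti{\eta}^2 d(\bar{a}_1 + 3\p_1 a_{11}) - \f12 \p_1(\wti{\eta}^2 d a_{11}) - \wti{\eta}^2 d\sum_{i=2}^3 \p_i a_{1i} \ge 3\wti{\eta}^2 - \wti{\eta}\wti{\eta}' a_{11} d,
\]
one controls $\si\wti{\eta}^2 |\p_1^2 W_3|^2 + \wti{\eta}^2 |\nabla W_3|^2$ in $L^2(\Om_2)$ up to the $|\wti{\eta}'|^2$--terms, which live in the annular region where the intermediate estimate and \eqref{aH21}, \eqref{Ha31} apply.

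The main obstacle, as in \eqref{W105}, is the troublesome cross term $\sum_{i,j=2}^3 \wti{\eta}^2 d\, \p_1 W_3 \cdot \p_1^3 a_{ij} \p_{ij}^2 \Psi^{\si}$ (and its cousins with fewer derivatives on $a_{ij}$ but more on $\Psi^{\si}$). Integration by parts in $x_i$ transfers one $x_i$--derivative onto $W_3$, producing $\sum_{i,j=2}^3 \wti{\eta}^2 d\, \p_1 a_{ij}\p_j \Psi^{\si} \p_{1i}^2 W_3$--type quantities whose wall boundary terms vanish thanks to \eqref{coe203} for the relevant $m$; the resulting bulk terms are controlled by $\|G\|_{H^3(\Om_2)}^2$ through Lemmas \ref{aH2}, \ref{Ha3}, and the intermediate estimate above, at the expense of one more derivative on the coefficients (hence the $C^4(\ol{\Om_2})$ hypothesis). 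Combining all the estimates yields \eqref{Ha41}.
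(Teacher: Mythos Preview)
Your second step---the ABC multiplier $\wti{\eta}^2 d(x_1)\p_1 W_3$ applied to the $W_3$--equation, with the coercivity coming from \eqref{9} for $j=3$ and the wall terms killed by \eqref{coe203} for $m\le 3$---matches the paper exactly (there this is the $\eta_{10}$ step). The problem is your first step.

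You propose to control $\nabla \p_1 W_2$ on a region \emph{containing} the transonic zone by testing the $W_2$--equation against $\eta_*^2\p_1^2 W_2$ with $\eta_*$ supported in $[\f{13}{20}L_0, L_1+13\ell]$. This cannot work: the key term that integration by parts produces is $\iiint \eta_*^2 a_{11}(\p_1^2 W_2)^2$, and on the support of $\eta_*$ the coefficient $a_{11}$ changes sign (it is negative on the supersonic interval and vanishes at the sonic front). The positivity $a_{11}\ge \ka_1$ that you invoke holds on $\{x_1\le \f{13}{20}L_0\}$, which is \emph{disjoint} from the interior of $\text{supp}\,\eta_*$; so the elliptic multiplier $\eta_*^2\p_1^2 W_2$ gives no coercivity there. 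The paper avoids this by using \emph{two} cut-offs $\eta_8,\eta_9$, each supported in a subsonic strip on either side of the transonic region (roughly $[\f7{10}L_0,\f{11}{20}L_0]$ and $[L_1+11\ell,L_1+14\ell]$), where $a_{11}\ge\ka_1>0$ genuinely holds; this yields $\|\nabla\p_1 W_2\|_{L^2}$ on those two strips, and since $\text{supp}\,\wti{\eta}'$ is covered by them, the cut-off derivative terms in your second step are then controlled. Replace your $\eta_*$ argument with this two-strip elliptic estimate and the rest of your outline goes through.

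A minor side remark: your claim that $W_3(L_0,\cdot)=0$ follows from the data and the equation is not correct (the equation at $x_1=L_0$ still involves $\p_1\Psi^\si$ and $\p_{1i}^2\Psi^\si$, which need not vanish), but this is harmless since the cut-off $\wti{\eta}$ vanishes near $L_0$ and no boundary condition there is actually used.
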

\begin{proof}
  Define smooth cut-off functions $0 \le \eta_m (x_1) \le 1$ on $[L_0, L_2]$ for $m = 8, 9$ such that
  \be\no
  \eta_8 (x_1) = \begin{cases}
                 0, & L_0 \le x_1 \le \f {7}{10} L_0, \\
                 1, & \f {13}{20} L_0 \le x_1 \le \f {3}{5} L_0, \\
                 0, & \f {11}{20} L_0 \le x_1 \le L_2,
               \end{cases}
  \q
  \eta_9 (x_1) = \begin{cases}
                 0, & L_0 \le x_1 \le L_1 + 11\ell, \\
                 1, & L_1 + 12 \ell \le x_1 \le L_1 + 13 \ell, \\
                 0, & L_1 + 14 \ell \le x_1 \le L_2.
               \end{cases}
  \ee
Multiplying the equation in \eqref{W2} by $\eta_m^2 \p_1^2 W_2$ for $m=8,9$ respectively, integrating over $\Om_2$ and integration by parts yield
\be\no
&&\iiint_{\Om_2}\eta_m^2 a_{11}(\p_1^2 W_2)^2 + \eta_m^2 \sum_{i,j=2}^3 a_{ij}\p_{1i}^2 W_2 \p_{1j}^2 W_2 dx' dx_1\\\no
&&=\iiint_{\Om_2}\bigg\{\eta_m^2 \p_1^2 W_2 \bigg(G_2-\sum_{i=2}^3 (2a_{1i}\p_{1i}^2 W_2+4\p_1 a_{1i} \p_i W_2)- (\bar{a}_1+2\p_1a_{11})\p_1 W_2\bigg)\\\no
&&\q-\eta_m^2 \sum_{i,j=2}^3 (3\p_1 a_{ij}\p_j W_2+ 3\p_1^2 a_{ij}\p_j W_1+ \p_1^3 a_{ij}\p_j\Psi)\p_{1i}^2 W_2 \\\no
&&\q+\eta_m^2\sum_{i,j=2}^3(\p_i a_{ij}\p_j W_2+ 2\p_{1i}^2 a_{ij}\p_j W_1+ \p_1^2 \p_i a_{ij}\p_j\Psi) \p_1^2 W_2\\\no
&&\q-\eta_m\eta_m'\bigg(2\sum_{i,j=2}^3 (a_{ij}\p_j W_2+ 2\p_1 a_{ij}\p_j W_1 + \p_1^2 a_{ij}\p_j \Psi)\p_{1i}^2 W_2-\si(\p_1^2 W_2)^2\bigg)\bigg\}.
\ee

On the support of $\eta_m$ for $m = 8, 9$, one has $a_{11} \ge \ka_1 >0$. Note also that the supports of $\eta_m' (x_1)$ are contained in $[\frac7{10} L_0, L_1 + 14\ell]$ for $m = 8, 9$, so the last term on the right hand side above can be controlled by \eqref{Ha31}. Then there holds from Lemmas \ref{exist} and \ref{Ha3} that
\be\no
&&\iiint_{\Om_2}\eta_m^2 |\nabla \p_1 W_2|^2 dx' dx_1\\\no
&&\leq C_*\iiint_{\Om_2}\bigg\{\eta_m^2(|\p_1^2 G|^2+W_1^2+W_2^2+\sum_{i=1}^3|\p_1^2 a_{1i}|^2 |\p_i W_1|^2)\\\no
&&\q+\eta_m^2\sum_{i=1}^3 |\p_1 a_{1i}|^2|\nabla W_2|^2+ \si |\eta_m'|^2 |\p_1^4\Psi|^2 \\\no
&&\q+\eta_m^2\sum_{i,j=2}^3(|\nabla a_{ij}|^2|\p_j W_2|^2+|\p_1^2 a_{ij}|^2|\p_j W_1|^2+|\p_1^3 a_{ij}|^2|\p_j \Psi|^2)\\\no
&&\q +|\eta_m'|^2 \sum_{i,j=2}^3(|a_{ij}|^2|\p_j W_2|^2+ |\p_1 a_{ij}|^2 |\p_j W_1|^2+ |\p_1^2 a_{ij}|^2 |\p_j\Psi|^2)\bigg\}dx' dx_1\\\label{Ha42}
&&\leq C_{\sharp} \iiint_{\Om_2} (G^2 + |\na G|^2+|\nabla^2 G|) dx' dx_1,
\ee
where $C_{\sharp}$ depends only on the $C^3(\overline{\Om_2})$ norms of $a_{ij}$ and $\bar{a}_1$.

Set $W_3=\p_1 W_2$. Then $W_3$ solves
\be\label{W3}\begin{cases}
\si \p_1^3 W_3 + \sum_{i,j=1}^3 a_{ij}\p_{ij}^2 W_3 + (\bar{a}_1+3\p_1 a_{11})\p_1 W_3 + 6 \sum_{i=2}^3 \p_1a_{1i}\p_i W_3\\
\q\q \q+\sum_{i,j=2}^3 (3\p_1 a_{ij}\p_{ij}^2 W_2+3\p_1^2 a_{ij}\p_{ij}^2 W_1+\p_1^3 a_{ij}\p_{ij}^2 \Psi)=G_3, \text{in }\ \Om_2,\\
(n_2\p_2+n_3\p_3) W_3(x_1,x')=0,\ \ \ (x_1,x')\in [L_0,L_2]\times \p\mb{E},
\end{cases}\ee
where
\be\no
&&G_3:=\p_1^3 G-3(\bar{a}_1'+\p_1^2 a_{11})W_3-(3\bar{a}_{1}''+\p_1^3 a_{11})W_2\\\no
&&\q\q\q-\bar{a}_1^{(3)} W_1-\sum_{i=2}^3 (6\p_1^2 a_{1i}\p_i W_2+ 2\p_1^3 a_{1i}\p_i W_1).
\ee

Define a smooth cut-off function $0 \le \eta_{10} (x_1) \leq 1$ on $[L_0, L_2]$ such that
\be\no
\eta_{10} (x_1) = \begin{cases}
                  0, & L_0 \le x_1  \le \f{13}{20} L_0, \\
                  1, & \f 35 L_0 \le x_1 \le L_1 + 12 \ell, \\
                  0, & L_1 + 13 \ell \le x_1 \le L_2.
                \end{cases}
\ee
Multiplying the equation in \eqref{W3} by $\eta_{10}^2 d \p_1 W_3$, integrating the resulting identity over $\Om_2$ and integration by parts yield
\be\no
&&\iiint_{\Om_2}\bigg(\eta_{10}^2 d(\bar{a}_1+3\p_1a_{11})-\frac{1}{2}\p_1(\eta_{10}^2 d a_{11})-\eta_{10}^2 d \sum_{i=2}^3 \p_i a_{1i}\bigg)(\p_1 W_3)^2 dx'dx_1 \\\no&&\q+\iiint_{\Om_2}\frac{1}{2}(\eta_{10}^2 d)'\sum_{i,j=2}^3 a_{ij}\p_j W_3\p_i W_3 -\si \eta_{10}^2 d (\p_1^2 W_3)^2 dx'dx_1\\\no
&&=\iiint_{\Om_2}\bigg\{\si (\eta_{10}^2 d)'\p_1 W_3\p_1^2 W_3 - 6\eta_{10}^2 d\p_1 W_3\sum_{i=2}^3 \p_1 a_{1i}\p_i W_3-\eta_{10}^2 d \p_1 W_3 G_3\\\no
&&\q-\eta_{10}^2 d \sum_{i,j=2}^3\bigg(\frac{7}{2}\p_1 a_{ij}\p_j W_3+6\p_1^2a_{ij}\p_j W_2+4\p_1^3 a_{ij}\p_j W_1+ \p_1^4 a_{ij}\p_j \Psi\bigg)\p_i W_3 \\\no
&&\q+\eta_{10}^2 d \sum_{i,j=2}^3(\p_i a_{ij}\p_j W_3+ 3\p_{1i}^2 a_{ij}\p_j W_2+3\p_1^2 \p_i a_{ij}\p_j W_1+\p_1^3\p_i a_{ij}\p_j \Psi)\p_1 W_3\\\label{4109}
&&\q-(\eta_{10}^2 d)'\sum_{i,j=2}^3 \bigg(3\p_1 a_{ij}\p_j W_2 + 3\p_1^2 a_{ij}\p_j W_1 + \p_1^3 a_{ij}\p_j \Psi \bigg)\p_i W_3\bigg\},\ee
where \eqref{coe203} for $m=0,\cdots,3$ has been used. By \eqref{8}-\eqref{9} and Lemma \ref{coe2}, it holds that
\be\no
 && \eta_{10}^2 d (\bar{a}_1+3\p_{1} a_{11}) - \f 12 \p_{1} (\eta^2_{10} d a_{11})- \eta_{10}^2 d(\p_2 a_{12}+\p_3 a_{13})\\\no
 &&
\ge  \eta_{10}^2 [4 - 3\|d(\p_{1} a_{11}-\bar{a}_{11}') \|_{L^{\oo}} - \f 12 \| \p_{1} (d a_{11} - d \bar{a}_{11}) \|_{L^{\oo}}- \| d \sum_{j=2}^3 \p_j a_{1j}\|_{L^{\oo}} ]
\\\no
&&- \eta_{10} \eta_{10}' d a_{11}\geq 3 \eta_{10}^2 - \eta_{10} \eta_{10}' a_{11} d, \q \forall (x_1, x') \in \Om_2.
\ee
Thus it follows from \eqref{4109} that
\be\no
&&\iiint_{\Om_2}\eta_{10}^2(\si |\p_1^2 W_3|^2 + |\nabla W_3|^2) dx' dx_1\\\label{Ha43}
&&\leq  \iiint_{\Om_2}\bigg\{ |\eta_{10}'|^2 (\si|\p_1W_3|^2+|\nabla W_3|^2)+\eta_{10}^2 \sum_{i,j=2}^3 |\na a_{ij}||\nabla W_3|^2\\\no
&&\q+ \eta_{10}^2\sum_{i,j=2}^3 (|\p_1\nabla a_{ij}\p_j W_2|^2 + |\p_1^2\nabla a_{ij}\p_j W_1|^2 + |\p_1^3\nabla a_{ij}\p_j \Psi|^2) \\\no
&&\q+ (\eta_{10}^2+|\eta_{10}'|^2)\sum_{i,j=2}^3 (|\p_1 a_{ij}\p_j W_2|^2 + |\p_1^2 a_{ij}\p_j W_1|^2 + |\p_1^3 a_{ij}\p_j \Psi|^2) \\\no
&&\q+ \eta_{10}^2\bigg(|\p_1^3 G|^2 +\sum_{i=1}^3|\p_1^2 a_{1i}\p_i W_2|^2+|\p_1^3 a_{1i}\p_i W_1|^2+|W_i|^2\bigg)\bigg\} dx' dx_1.
\ee
Since the support of $\eta_{10}'$ is contained in $[\f{13}{20} L_0, \f 35 L_0] \cup [L_1 + 12 \ell, L_1 + 13 \ell]$, one can use \eqref{Ha31} and \eqref{Ha42} to control the first integral on the right hand side of \eqref{Ha43}. Finally, the estimate \eqref{Ha41} follows from \eqref{aH1}, \eqref{aH21}, \eqref{Ha31} and \eqref{Ha43}.

\end{proof}

Now we can improve the regularity of the solution $\psi$ to \eqref{li1} to be $H^4(\Om)$.
\begin{lemma}\label{H4first}
The $H^2(\Om) $ strong solution $\psi$ to \eqref{li1} admits $H^4(\Om)$ regularity such that
\be\label{aH400}
\|\psi\|_{H^4(\Om)}  \le C_{\sharp} \| F\|_{H^3(\Om)},
\ee
where $C_\sharp>0$ depends only on the $C^4(\overline{D_2})$ norms of $a_{ij}, i,j=1,2,3$ and $\bar{a}_1$.

\end{lemma}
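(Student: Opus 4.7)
The plan is to combine three ingredients: $\sigma$-uniform a priori bounds from Lemmas~\ref{Ha3} and~\ref{Ha4} on $x_1$-derivatives of the singular-perturbation solution, a cross-sectional elliptic bootstrap to recover the tangential derivatives, and the $H^4$ subsonic-region estimate of Lemma~\ref{H3} near the inlet. The Galerkin/singular-limit machinery of Lemma~\ref{exist} then glues these into \eqref{aH400}.

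First, I would rerun the Galerkin scheme of Lemma~\ref{exist1} and observe that the approximations $\Psi^{N,\sigma}$ are smooth enough in $(x_1, x')$ to justify the integrations by parts used in the proofs of Lemmas~\ref{Ha3} and~\ref{Ha4}; hence they satisfy the corresponding $(N,\sigma)$-uniform bounds on $\na\p_1^2\Psi^{N,\sigma}$ over $(\tfrac{7}{10}L_0, L_1+14\ell)\times\mb{E}$ and on $\na\p_1^3\Psi^{N,\sigma}$ over $(\tfrac{3}{5}L_0, L_1+12\ell)\times\mb{E}$. Passing $N\to\infty$ and then $\sigma\to 0^+$ along weakly convergent subsequences, as in the proof of Lemma~\ref{exist}, transfers these bounds to $\Psi$ and thus to $\psi = \Psi|_\Om$, controlled by $\|F\|_{H^3(\Om)}$. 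In particular, $\p_1^4\psi$ and all mixed derivatives $\p_i\p_1^j\psi$ with $i\in\{1,2,3\}$ and $j\le 3$ lie in $L^2$ on the transonic portion of $\Om$.

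The still-missing tangential derivatives $\na_{x'}^k\p_1^{4-k}\psi$ for $k=2,3,4$ are recovered by a cross-sectional elliptic bootstrap. Differentiating \eqref{li1} $m$ times in $x_1$ and setting $W_m = \p_1^m\psi$, one isolates the cross-sectional second-order terms as
$$ \sum_{i,j=2}^3 k_{ij}(x_1,x')\, \p_{ij}^2 W_m \;=\; R_m(x_1,x') \qquad \text{in }\mb{E},$$
with the homogeneous Neumann condition $(n_2\p_2+n_3\p_3)W_m=0$ on $\p\mb{E}$, preserved under $\p_1$-differentiation because $(n_2,n_3)$ depends only on $x'$. The matrix $(k_{ij})_{i,j=2}^3$ is uniformly positive by Lemma~\ref{coe1}, and the conormal compatibility \eqref{coe104} persists for each $W_m$. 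Classical $H^{\ell+2}$ elliptic regularity on $\mb{E}$, applied fibrewise in $x_1$ and then squared and integrated, yields
$$ \|W_m\|_{L^2_{x_1}H^{\ell+2}_{x'}(\Om)}^2 \;\le\; C\bigl(\|R_m\|_{L^2_{x_1}H^{\ell}_{x'}(\Om)}^2 + \|W_m\|_{L^2(\Om)}^2\bigr).$$
Chaining the cases $(m,\ell) = (2,0) \to (1,1) \to (0,2)$ delivers, in turn, $\na_{x'}^2\p_1^2\psi$, $\na_{x'}^3\p_1\psi$ and $\na_{x'}^4\psi$ in $L^2$ on the transonic region; at each step $R_m$ involves only $x_1$-derivatives of $\psi$ up to order $m+2$ together with tangential derivatives already established in the previous step or in Step~1.

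Combining these estimates with the $H^4((L_0,\tfrac{1}{5}L_0)\times\mb{E})$ bound of Lemma~\ref{H3} on the strictly subsonic inlet region, whose overlap with the transonic-region estimate covers all of $\Om$, yields \eqref{aH400}. The main obstacle is the cross-sectional bootstrap: one must check that the coefficients $k_{ij}$ are regular enough for the $H^{\ell+2}(\mb{E})$ theory up through $\ell=2$ (which is exactly what forces the $C^4$-type regularity requirement on the coefficients in the statement), and that both the derived Neumann boundary condition and the conormal compatibility \eqref{coe104} propagate to each $W_m$. Both are supplied by Lemma~\ref{coe1} together with the $x_1$-independence of the unit outer normal to $\Ga_w$.
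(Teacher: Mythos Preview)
Your overall architecture---pass the $\sigma$-uniform bounds of Lemmas~\ref{Ha3}--\ref{Ha4} through the Galerkin/singular limits, bootstrap the missing tangential derivatives, glue with Lemma~\ref{H3}---matches the paper. But the cross-sectional chain $(2,0)\to(1,1)\to(0,2)$ has a real gap at its first step. Differentiating \eqref{li1} twice in $x_1$, the right-hand side $R_2$ inevitably contains $2\sum_{i,j=2}^3 \p_1 k_{ij}\,\p_{ij}^2 W_1$, i.e.\ the mixed derivative $\nabla_{x'}^2\p_1\psi$. The $\sigma$-uniform bounds only deliver $\nabla\p_1^2\psi$ and $\nabla\p_1^3\psi$---at most \emph{one} tangential factor on a high $x_1$-derivative---and the $H^2$ bound from Lemma~\ref{exist} gives $\nabla_{x'}^2\psi$ but not $\nabla_{x'}^2\p_1\psi$. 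So your claim that ``$R_m$ involves only \dots\ tangential derivatives already established in the previous step or in Step~1'' fails at $(2,0)$: there is no previous step, and $\nabla_{x'}^2 W_1$ is not in Step~1. The repair is simply to lengthen the chain to $(1,0)\to(0,1)\to(2,0)\to(1,1)\to(0,2)$; the step $(1,0)$ does close (its $R_1$ involves only $\p_1^3\psi$, $\p_i\p_1^2\psi$ and $\p_{ij}^2\psi$, all already controlled), and each subsequent step then has what it needs.

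With that fix your fibrewise route would work, but it differs from the paper's. The paper instead rewrites the equation as the genuinely 3D elliptic problem $-\p_1^2\Psi-\sum_{i,j=2}^3\p_i(a_{ij}\p_j\Psi)=\tilde G$ (see \eqref{p11}), with the sign-changing $a_{11}$ absorbed into $\tilde G$, and alternates between standard interior/boundary $H^k$ estimates for this elliptic operator and difference-quotient arguments on the $x_1$-differentiated problems for $W_1$, $W_2$. The divergence form automatically packages the troublesome term as $\p_i(\p_1 a_{ij}\p_j\Psi)$, so $\nabla_{x'}^2 W_1$ comes out directly from \eqref{p12} without any preliminary bootstrap. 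Your cross-sectional method is closer to what the paper uses \emph{later}, in Lemmas~\ref{gloH2}--\ref{H4}, to sharpen $C_\sharp$ to $C_*$; but there the paper already has $\psi\in H^4$ from the present lemma, which is precisely what lets it treat $\tilde w_m$ as the unique $H^2$ strong solution of its own mixed-type problem.
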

\begin{proof}

Note that the estimates \eqref{Ha31} and \eqref{Ha41} can be verified rigorously for the approximated solutions $\Psi^{N,\sigma}=\sum_{j=0}^N A_j^{N,\sigma}(x_1) b_j(x')$ constructed in Lemma \ref{exist1}. As shown in Lemma \ref{exist}, up to a subsequence, $\{\Psi^{N,\sigma}\}$ converges strongly in $L^2(\Omega_2)$ and weakly in $H^1(\Omega_2)$ to a limit $\Psi^{\sigma}\in H^1(\Omega_2)$, and the sequence $\p_1 \Psi^{N,\sigma}$ will converge strongly in $L^2(\Om_2)$ and weakly in $H^1(\Om_2)$ to $\p_1\Psi^{\si}$ as $N$ tends to infinity. Since the estimates \eqref{Ha31} and \eqref{Ha41} are uniformly in $N$, the subsequence $\nabla\p_1^2\Psi^{N,\sigma}$ and $\nabla\p_1^3\Psi^{N,\sigma}$ will converge weakly in $L^2((\frac{3}{5}L_0, L_1+12\ell)\times \mb{E})$ to $\nabla\p_1^2 \Psi^{\sigma}$ and $\nabla\p_1^3\Psi^{\sigma}$, respectively. Thus besides the estimates \eqref{Hs1} and \eqref{Hs2} in Lemma \ref{exist}, $\Psi^{\si}$ also satisfies the following uniform estimate with respect to $\sigma$:
\be\label{Ha401}
\int_{\frac{3}{5}L_0}^{L_1+12\ell}\iint_{\mb{E}} |\nabla\p_{1}^2\Psi^{\sigma}|^2+ |\nabla\p_{1}^3\Psi^{\sigma}|^2 dx' dx_1\leq C_{\sharp}\|G\|_{H^3(\Omega_2)}^2\leq C_{\sharp}\|F\|_{H^3(\Omega)}^2.
\ee
By \eqref{Ha401}, there exists a subsequence $\{\Psi^{\sigma_j}\}_{j=1}^{\infty}$ such that $\nabla\p_1^2\Psi^{\sigma_j}$ and $\nabla\p_1^3\Psi^{\sigma_j}$ will converge weakly in $L^2((\frac{3}{5}L_0, L_1+12\ell)\times \mb{E})$ to $\nabla\p_1^2 \Psi$ and $\nabla\p_1^3\Psi$, respectively. This, together with Lemma \ref{exist}, yields that
\be\label{Ha402}
\|\Psi\|_{H^2(\Omega_2)}^2+\int_{\frac{3}{5}L_0}^{L_1+12\ell}\iint_{\mb{E}}|\nabla\p_{1}^2\Psi|^2+ |\nabla\p_{1}^3\Psi|^2 dx' dx_1 \leq C_{\sharp}\|F\|_{H^3(\Omega)}^2.
\ee

As shown in Lemma \ref{exist}, $\Psi\in H^2(\Om_2)$ can be regarded as a strong solution to the following boundary value problem for a second order elliptic equation
\be\label{p11}\begin{cases}
\displaystyle-\p_1^2 \Psi -\sum_{i,j=2}^3 \p_i(a_{ij}\p_j\Psi)= \tilde{G}\in L^2(\Om_2),\ \ &\text{in }\Omega_2,\\
\Psi(L_0,x')=\p_1\Psi(L_2,x')=0,\ \ \ & \forall x'\in \mb{E},\\
\sum_{i,j=2}^3 a_{ij}\p_j\Psi n_i=0,\ \ \ &\text{on }[L_0,L_2]\times \p\mb{E},
\end{cases}\ee
where
\be\no
\tilde{G}=(a_{11}-1)\p_1^2\Psi+ 2\sum_{i=2}^3 a_{1i}\p_{1i}^2 \Psi- \sum_{i,j=2}^3 \p_i a_{ij}\p_j\Psi+ \bar{a}_1 \p_1\Psi-G.
\ee
By \eqref{Ha402}, $\p_1\tilde{G}$ and $\p_1^2\tilde{G}$ belong to $L^2((\frac{3}{5}L_0,L_1+12\ell)\times \mb{E})$. Choosing the test function $\xi=\p_1\zeta$ in \eqref{p2} with $\zeta\in C^2(\Omega_2)$ and $\zeta(x_1,x')\equiv 0$ on $[L_0,\frac{3}{5}L_0]\times \mb{E}\cup [L_1+12\ell, L_2]\times \mb{E}$, one can integrate by parts to show that $W_1=\p_1\Psi\in H^1(\Omega_2)$ is a weak solution to the following problem on $(\frac{3}{5}L_0,L_1+12\ell)\times \mb{E}$:
\be\label{p12}\begin{cases}
-\p_1^2 W_1 -\sum_{i,j=2}^3 \p_i (a_{ij}\p_j W_1)=G_1:=\p_1 \tilde{G} + \sum_{i,j=2}^3 \p_i(\p_1 a_{ij}\p_j\Psi),\\
\sum_{i,j=2}^3 a_{ij}\p_j W_1 n_i=0.
\end{cases}\ee

An argument using the difference quotient (for instance, see \cite[Theorem 8.8]{gt}) yields that the weak second order derivatives $\nabla_{x'}^2 W_1$ exist and satisfy
\be\label{p14}
&&\|\nabla_{x'}^2 W_1\|_{L^2((\frac{1}{2}L_0, L_1+11\ell)\times \mb{E})}\\\no
&&\leq C_*(\|W_1\|_{H^1(\Om_2)}+\|G_1\|_{L^2 (((\frac{3}{5}L_0,L_1+12\ell)\times \mb{E}))})\leq C_{\sharp}\|G\|_{H^3(\Omega_2)},
\ee
which implies that $\tilde{G}\in H^1((\frac{1}{2}L_0, L_1+11\ell)\times \mb{E})$. Thus the interior and boundary $H^3$ estimates up to the cylinder wall for the function $\Psi$ in \eqref{p11} lead to
\be\label{p15}
&&\|\Psi\|_{H^3((\frac{2}{5}L_0, L_1+10\ell)\times \mb{E})}\\\no
&&\leq C_{\sharp}(\|\Psi\|_{H^1(\Omega_2)}+\|\tilde{G}\|_{H^1((\frac{1}{2}L_0, L_1+11\ell)\times \mb{E})})\leq C_{\sharp}\|G\|_{H^3(\Omega_2)}.
\ee
It follows from \eqref{p14} and \eqref{p15} that $G_1\in H^1((\frac{2}{5}L_0, L_1+11\ell)\times \mb{E})$. Together with \eqref{p12}, one gets
\be\label{p16}
&&\|W_1\|_{H^3((\frac{3}{10}L_0, L_1+9\ell)\times \mb{E})}\\\no
&&\leq C_{\sharp}(\|W_1\|_{H^1(\Omega_2)}+\|G_1\|_{H^1((\frac{2}{5}L_0, L_1+10\ell)\times \mb{E})})\leq C_{\sharp}\|G\|_{H^3(\Om_2)}.
\ee
Similarly, $W_2=\p_1^2\Psi\in H^1((\frac{3}{5}L_0,L_1+12\ell)\times \mb{E})$ can be regarded as a weak solution to
\be\no\begin{cases}
-\p_1^2 W_2 -\sum_{i,j=2}^3 \p_i (a_{ij}\p_j W_2)=G_2:=\p_1^2 \tilde{G} + \sum_{i,j=2}^3 \p_i(2\p_1 a_{ij}\p_j W_1 +\p_1^2 a_{ij}\p_j\Psi),\\
\sum_{i,j=2}^3 a_{ij}\p_j W_2 n_i=0,\ \ \ \text{on }[\frac{3}{5}L_0,L_2]\times \p\mb{E}.
\end{cases}\ee
Thanks to \eqref{Ha402} and \eqref{p14}, $G_2\in L^2((\frac{1}{2}L_0, L_1+11\ell)\times \mb{E})$, and the second order weak derivatives $\nabla_{x'}^2 W_2$ exist and obey the estimate
\be\label{p17}
&&\|\nabla_{x'}^2 W_2\|_{L^2((\frac{2}{5}L_0, L_1+9\ell)\times \mb{E})}\leq C_*(\|W_2\|_{H^1((\frac{3}{5}L_0,L_1+12\ell)\times \mb{E})}\\\no
&&\q\q+\|\p_1^2 \tilde{G} + \sum\limits_{i,j=2}^3 \p_i(\p_1 a_{ij}\p_j\Psi)\|_{L^2 (((\frac{1}{2}L_0,L_1+11\ell)\times \mb{E}))})\leq C_{\sharp}\|G\|_{H^3(\Omega_2)}.
\ee
The estimates \eqref{p16} and \eqref{p17} imply that $\tilde{G}$ belongs to $H^2((\frac{2}{5}L_0, L_1+9\ell)\times \mb{E})$ and the interior and boundary $H^4$ estimates up to the cylinder wall for the function $\Psi$ in \eqref{p11} yield that
\be\label{p18}
\|\Psi\|_{H^4((\frac{1}{4}L_0, L_1+8\ell)\times \mb{E})}\leq C_{\sharp}(\|\Psi\|_{H^1(\Omega_2)}+\|\tilde{G}\|_{H^2((\frac{2}{5}L_0, L_1+9\ell)\times \mb{E})})\leq C_{\sharp}\|G\|_{H^3(\Omega_2)}.
\ee
Note that $\psi=\Psi|_{\Omega}$. Combining the estimates \eqref{H31} and \eqref{p18}, one gets $\psi\in H^4(\Omega)$ with the estimate
\be\no
\|\psi\|_{H^4(\Omega)}\leq C_{\sharp}\|G\|_{H^3(\Omega_2)}\leq C_{\sharp}\|F\|_{H^3(\Omega)}.
\ee

\end{proof}

Finally, we prove that the constant $C_{\sharp}$ in \eqref{aH400} can be replaced by a constant $C_*$ which depends only on the $H^3(\Om)$ norms of $k_{ij}$, $k_1$. One can start with the $H^2(\Omega)$ estimate.

\begin{lemma}\label{gloH2}
Under the assumptions in Lemma \ref{H1e}, the $H^2(\Om)$ strong solution to \eqref{li1} satisfies
  \be\label{f2}
  \| \psi\|_{H^2(\Om)}  \le C_* \|F\|_{H^1(\Om)},
  \ee
  where $C_*>0$ depends only on the $H^3(\Om)$ norms of $k_{ij}, i,j=1,2,3$ and $\bar{k}_1$.

\end{lemma}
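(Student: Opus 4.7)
The plan is to refine the $H^2$ estimate \eqref{H2} of Lemma \ref{exist2}, whose constant $C_\sharp$ was pessimistically tied to $C^2$ bounds on the coefficients, by re-examining each step of the multiplier and elliptic arguments where a second derivative of some $k_{ij}$ appeared and replacing the pointwise $L^\infty$ estimate used there by an $L^6$ H\"older estimate. The qualitative regularity $\psi\in H^4(\Om)$ already furnished by Lemma \ref{H4first} justifies all the integrations by parts needed, while the three-dimensional embeddings $H^3(\Om)\hookrightarrow W^{2,6}(\Om)\cap C^{1,1/2}(\overline{\Om})$ provide the sharper control of the coefficients. Note first that Lemma \ref{H1e} combined with the subsonic portion of \eqref{H1} already supplies $\|\psi\|_{H^1(\Om)}^2+\int_{L_0}^{L_0/10}\iint_{\mb{E}}|\nabla^2\psi|^2\le C_*\|F\|_{L^2(\Om)}^2$ with $C_*$ depending only on $H^3$ norms, so only the transonic portion of $\|\nabla^2\psi\|_{L^2(\Om)}$ requires sharpening.

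For this, I shall extend $\psi$ to $\Psi$ on $\Om_2$ as in Section \ref{se32} and revisit the multiplier identity \eqref{W10} for $W_1=\p_1\Psi^\sigma$. The crux is the term
$$\mathcal{I}:=\iiint_{\Om_2}\eta_4^2 d\sum_{i,j=2}^3\p_{1i}^2 a_{ij}\,\p_j\Psi\,\p_1 W_1\,dx'dx_1$$
from the last line of \eqref{W105}, which previously consumed $\|\p_{1i}^2 a_{ij}\|_{L^\infty}$. Since the background coefficients satisfy $\bar{a}_{ii}\equiv 1$ and $\bar{a}_{23}\equiv 0$ (the background potential depending only on $x_1$), Lemma \ref{coe2} yields $\|\p_{1i}^2 a_{ij}\|_{L^6(\Om_2)}=\|\p_{1i}^2(a_{ij}-\delta_{ij})\|_{L^6(\Om_2)}\le C_*(\epsilon+\delta_0)$ via $H^3\hookrightarrow W^{2,6}$. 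H\"older together with $\|\nabla\Psi\|_{L^6}\le C\|\Psi\|_{H^2}$ (from $H^1\hookrightarrow L^6$), followed by Young's inequality, gives
$$|\mathcal{I}|\le C_*(\epsilon+\delta_0)\|\Psi\|_{H^2(\Om_2)}\|\p_1 W_1\|_{L^2(\Om_2)}\le\tfrac14\|\p_1 W_1\|_{L^2}^2+C_*(\epsilon+\delta_0)^2\|\Psi\|_{H^2(\Om_2)}^2.$$
The companion term in \eqref{W10} involving $\p_1^2 a_{ij}$ admits an entirely analogous $L^6$ treatment; all remaining products there involve only first derivatives of $a_{ij}$, uniformly bounded by $H^3(\Om)\hookrightarrow W^{1,\infty}$. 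The elliptic reconstruction of $\|\nabla_{x'}^2\Psi\|_{L^2}$ carried out in \eqref{p4}, \eqref{p9} and \eqref{p10} invokes only the classical interior and boundary $H^2$ theory for second-order elliptic equations, which demands merely $W^{1,\infty}$ coefficients and is therefore also controllable by $H^3$ norms.

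Summing all the refined estimates will yield $\|\Psi\|_{H^2(\Om_2)}^2\le C_*\|G\|_{H^1(\Om_2)}^2+C_*(\epsilon+\delta_0)^2\|\Psi\|_{H^2(\Om_2)}^2$ with $C_*$ depending only on $H^3$ norms; for $\epsilon+\delta_0$ sufficiently small the last term is absorbed, and the restriction $\psi=\Psi|_{\Om}$ together with $\|G\|_{H^1(\Om_2)}\le C\|F\|_{H^1(\Om)}$ delivers \eqref{f2}. The main obstacle is exactly the H\"older treatment of $\mathcal{I}$: in three dimensions $H^3$ does not embed into $C^2$, so the naive pointwise bound on $\p^2 a_{ij}$ underlying the $C^2$ constant in Lemma \ref{aH2} is unavailable. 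The resolution exploits both the sharper embedding $H^3\hookrightarrow W^{2,6}$ and the structural observation that the background tangential principal part of the linearised operator is simply the flat Laplacian, so that $\p^2 a_{ij}$ for $i,j\ge 2$ inherits the full $O(\epsilon+\delta_0)$ smallness from $a_{ij}-\delta_{ij}$; without this structural fact the absorption step would fail.
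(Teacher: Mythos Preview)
Your approach is correct but takes a genuinely different route from the paper's own proof.

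You work on the extended domain $\Om_2$ and sharpen the constant in the multiplier identity \eqref{W10} by replacing the naive $L^\infty$ bound on $\p^2 a_{ij}$ with the triple H\"older estimate $L^6\times L^3\times L^2\to L^1$, exploiting $H^3\hookrightarrow W^{2,6}$ together with the structural smallness $a_{ij}-\delta_{ij}=O(\epsilon+\delta_0)$ for $i,j\ge 2$; the resulting $C_*(\epsilon+\delta_0)^2\|\Psi\|_{H^2}^2$ is then absorbed. The paper instead avoids the extended domain altogether: it differentiates \eqref{li1} once in $x_1$, cuts off near the entrance so that $\tilde w_1=\eta\,\p_1\psi$ satisfies a problem of the \emph{same structural form} as \eqref{li1}, and then simply applies the $H^1$ multiplier estimate of Lemma~\ref{H1e} to $\tilde w_1$. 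Since Lemma~\ref{H1e} requires only the $L^2$ norm of the right-hand side, and the forcing $F_1$ for $\tilde w_1$ involves only \emph{first} derivatives of $k_{ij}$ (controlled in $L^\infty$ by $H^3$), second derivatives of the coefficients never appear; the tangential Hessian $\nabla_{x'}^2\psi$ is then recovered slice by slice via the Neumann problem \eqref{f15}, and the small term $(\epsilon+\delta_0)\|\psi\|_{H^2}$ is absorbed.

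What each buys: the paper's argument is shorter and reuses the already-proved $H^1$ estimate verbatim, circumventing the need to revisit the $\Om_2$ construction or track Sobolev exponents through a refined H\"older step. Your argument, on the other hand, stays closer to the original $H^2$ derivation and makes transparent exactly \emph{where} the $C^2$ dependence in $C_\sharp$ came from and why it is inessential; it also illustrates a general principle---that coefficient regularity beyond $W^{1,\infty}$ can often be traded for Sobolev embeddings on the solution---which would be useful if one had to repeat the analysis with coefficients merely in $H^3$ rather than $C^4$. Both approaches hinge on the same structural fact that the background tangential principal symbol is $\delta_{ij}$, so that every occurrence of $\p^2 k_{ij}$ for $i,j\ge2$ carries an $O(\epsilon+\delta_0)$ prefactor.
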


\begin{proof}

Denote $w_1 = \p_{1} \psi$. Due to $\psi\in H^4(\Omega)$, then $w_1=\p_{1}\psi$ satisfies the following equation almost everywhere
\be\label{f1}\begin{cases}
\sum_{i,j=1}^3 k_{ij}\p_{ij}^2 w_1 + (\bar{k}_1+\p_1 k_{11})\p_{1}w_1+ 2\sum_{i=2}^3 \p_1 k_{1i} \p_i w_1\\
\q\q\q\q\q\q=\p_1 F-\bar{k}_1' \p_1\psi-\sum_{i,j=2}^3 \p_1 k_{ij} \p_{ij}^2 \psi,\\
(n_2\p_2+n_3\p_3)w_1(x_1,x')=0,\ \ \ \forall (x_1,x')\in \Ga_w.
\end{cases}\ee

Let $\eta$ be a monotone increasing smooth cutoff function on $[L_0,L_1]$ such that $0\leq \eta\leq 1$ and
\be\no
\eta(x_1)=\begin{cases}
0, \ \ &L_0\leq x_1\leq \frac{3L_0}{4},\\
1, \ \ &\frac{L_0}{2}\leq x_1\leq L_1.
\end{cases}\ee
Then $\tilde{w}_1=\eta w_1$ solves
\be\label{f00}\begin{cases}
\sum_{i,j=1}^3 k_{ij}\p_{ij}^2 \tilde{w}_1 +(\bar{k}_1+\p_1 k_{11})\p_{1}\tilde{w}_1+2\sum_{i=2}^3 \p_1 k_{1i} \p_{i}\tilde{w}_1=F_1,\ \text{in }\ \ \Omega,\\
\tilde{w}_1(L_0,x')=0,\ \ \text{on }\ \ x'\in \mb{E},\\
(n_2\p_2+n_3\p_3)\tilde{w}_1(x_1,x')=0,\ \forall (x_1,x')\in \Ga_w,
\end{cases}\ee
where
\be\no
&&F_1=\eta(\p_1 F-\bar{k}_1' \p_1\psi-\sum_{i,j=2}^3 \p_1 k_{ij} \p_{ij}^2 \psi)+ 2 \eta'\sum_{i=1}^3 k_{1i}\p_i w_1\\\no
&&\q\q\q +\eta' (\bar{k}_1+\p_1 k_{11}) w_1 + \eta'' k_{11} w_1.
\ee

Note that if $0<\delta_0\leq \delta_*$ in Lemma \ref{coe1}, then there holds for any $(x_1,x¡¯)\in\Omega$
\be\no
&& 2(\bar{k}_1+\p_1 k_{11})-\p_{1} k_{11}\leq 2\bar{k}_1+\bar{k}_{11}'+\|\p_{1} k_{11}-\bar{k}_{11}'\|_{L^{\infty}}\leq -\kappa_*<0,\\\no
&& 2(\bar{k}_1+\p_1 k_{11})+\p_{1} k_{11}\leq 2\bar{k}_1+3\bar{k}_{11}'+3\|\p_{1} k_{11}-\bar{k}_{11}'\|_{L^{\infty}}\leq -\kappa_*<0.
\ee
Then as shown in Lemma \ref{exist2}, there exists a unique strong solution $v_1\in H^2(\Omega)$ to \eqref{f00} with
\be\no
\|v_1\|_{H^1(\Omega)}\leq C_*\|F_1\|_{L^2(\Omega)}.
\ee
By the uniqueness, $v_1=\tilde{w}_1$ holds a.e. in $\Omega$. Thus
\be\no
&&\bigg(\int_{\frac{L_0}{2}}^{L_1}\iint_{\mb{E}} |\nabla w_1|^2 dx' dx_1\bigg)^{\frac{1}{2}}\leq \|\tilde{w}_1\|_{H^1(\Omega)}\leq C_*\|F_1\|_{L^2(\Omega)}\\\no
&&\leq C_*\bigg(\|\p_1 F\|_{L^2(\Om)}+\|\bar{k}_1'\|_{L^{\infty}}\|\p_{1}\psi\|_{L^2(\Omega)}+\sum_{i,j=2}^3\|\p_1 k_{ij}\|_{L^{\infty}(\Omega)}\|\p_{ij}^2 \psi\|_{L^2(\Omega)}\\\no
&&\q\q\q\q+\sum_{i=1}^3\|k_{1i}\|_{L^{\infty}(\Omega)}\|\eta'\p_i w_1\|_{L^2(\Om)}\\\no
&&\q\q+\|\bar{k}_1+\p_1 k_{11}\|_{L^{\infty}(\Om)}\|\eta' w_1\|_{L^2(\Om)}+\|k_{11}\|_{L^{\infty}(\Om)}\|\eta'' w_1\|_{L^2(\Omega)}\bigg)\\\no
&&\leq C_*(\|F\|_{H^1(\Omega)}+ (\epsilon+\delta_0)\|\psi\|_{H^2(\Omega)}),
\ee
where one has used \eqref{H1}. This, together with \eqref{H1}, yields
\be\label{f142}
\|\psi\|_{H^1(\Omega)}+ \|\nabla \p_1\psi\|_{L^2(\Omega)}\leq C_*(\|F\|_{H^1(\Omega)}+ (\epsilon+\delta_0)\|\psi\|_{H^2(\Omega)}).
\ee

For each fixed $x_1\in [L_0, L_1]$, it follows from \eqref{li1} that
\be\label{f15}\begin{cases}
\sum_{i=2}^3 \p_i^2\psi(x_1,x')=H(x_1,x'),\ \ \ &\text{in }\mb{E} \\
(n_2\p_2+n_3\p_3)\psi(x_1,x')=0,\ \ \ &\forall x'\in \p\mb{E},
\end{cases}\ee
where
\be\no
H(x_1,x'):= F- k_{11}\p_1^2\psi- 2\sum_{i=2}^3 k_{1i}\p_{1i}^2\psi-\bar{k}_1\p_1\psi- \sum_{i,j=2}^3(k_{ij}-\delta_{ij})\p_{ij}^2\psi.
\ee

Then the $H^2$ estimate for the Poisson equation with the homogeneous Neumann boundary condition implies that
\be\label{f16}
\|\nabla_{x'}^2 \psi(x_1,\cdot)\|_{L^2(\mb{E})}^2\leq C(\mb{E})\|H(x_1,\cdot)\|_{L^2(\mb{E})}^2.
\ee
Integrating the above inequality with respect to $x_1$ over $[L_0,L_1]$ gives
\be\no
&&\|\nabla_{x'}^2 \psi\|_{L^2(\Om)}\leq C\bigg(\|F\|_{L^2(\Om)}+\|k_{11}\|_{L^{\infty}}\|\p_1^2\psi\|_{L^2(\Omega)}+\sum_{i=2}^3 \|k_{1i}\|_{L^{\infty}}\|\p_{1i}^2\psi\|_{L^2(\Omega)}\\\no
&&\q\q+\|\bar{k}_1\p_1\psi\|_{L^2(\Om)}+\sum_{i,j=2}^3 \|k_{ij}-\delta_{ij}\|_{L^{\infty}(\Omega)}\|\p_{ij}^2\psi\|_{L^2(\Om)}\bigg)\\\label{f17}
&&\leq C_*(\|F\|_{H^1(\Omega)}+ (\epsilon+\delta_0)\|\psi\|_{H^2(\Omega)})
\ee
where \eqref{f142} has been used. Combining this with \eqref{f142} leads to
\be\no
\|\psi\|_{H^2(\Omega)}\leq C_{*}(\|F\|_{H^1(\Omega)}+ (\epsilon+\delta_0)\|\psi\|_{H^2(\Omega)}).
\ee
Choosing $\epsilon+\delta_0$ small enough so that $C_*(\epsilon+\delta_0)\leq \frac12$, one then obtains \eqref{f2} immediately.

\end{proof}

\begin{lemma}\label{H4}
Under the assumptions in Lemma \ref{H1e}, the $H^2(\Om)$ strong solution to \eqref{li1} satisfies
  \be\label{f3}
  &&\|\psi\|_{H^3(\Om)}\le C_*\| F\|_{H^2(\Om)},\\\label{f4}
  &&\|\psi\|_{H^4(\Om)}\le C_*\| F\|_{H^3(\Om)}
  \ee
with $C_*>0$ depends only on the $H^3(\Om)$ norms of $k_{ij}, i,j=1,2,3$ and $\bar{k}_1$.

\end{lemma}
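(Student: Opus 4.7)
The strategy is to bootstrap from Lemma \ref{gloH2} exactly as that lemma bootstraps from Lemma \ref{exist2}: differentiate the equation in $x_1$, cut off near the entrance, apply the previously-established energy estimate to the differentiated equation (which has the same structural type), and recover the missing pure-tangential regularity via Neumann elliptic regularity for the Poisson equation on $\mathbb{E}$. Lemma \ref{H4first} already guarantees $\psi\in H^4(\Omega)$ with the cruder constant $C_\sharp$, so every quantity below is a priori finite; the only issue is to replace $C_\sharp$ (depending on $C^4$ norms) by $C_\ast$ (depending only on $\|k_{ij}\|_{H^3}$ and $\bar{k}_1$).

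\emph{Proof of \eqref{f3}.} Set $w_1=\partial_1\psi$, which satisfies \eqref{f1} together with the Neumann condition on $\Gamma_w$. Choosing the cut-off $\eta$ as in Lemma \ref{gloH2}, the function $\tilde w_1=\eta w_1$ solves a BVP of the form \eqref{li1}, with principal part $\sum k_{ij}\partial_{ij}^2$ and modified first-order coefficient $\bar k_1+\partial_1 k_{11}$; the smallness $\|k_{ij}-\delta_{ij}\|_{H^3}\le C_\ast(\epsilon+\delta_0)$ from Lemma \ref{coe1}, combined with Lemma \ref{bkg-coe} at $j=1,2$, preserves the Friedrichs-type negativity $2(\bar k_1+\partial_1 k_{11})\pm\partial_1 k_{11}\le-\tfrac12\kappa_\ast$. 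Applying Lemma \ref{gloH2} to $\tilde w_1$ gives $\|\tilde w_1\|_{H^2(\Omega)}\le C_\ast\|F_1\|_{H^1(\Omega)}$. Since $H^3(\Omega)\hookrightarrow W^{1,\infty}(\Omega)$ in three dimensions, the source is bounded by
\[
\|F_1\|_{H^1(\Omega)}\le C_\ast\|F\|_{H^2(\Omega)}+C_\ast\|\psi\|_{H^2(\Omega)}+C_\ast(\epsilon+\delta_0)\|\psi\|_{H^3(\Omega)},
\]
where the coefficient $(\epsilon+\delta_0)$ in front of $\|\psi\|_{H^3}$ arises because each problematic term $\partial_1 k_{ij}\,\partial_{ij}^2\psi$ carries the small factor from $\|k_{ij}-\delta_{ij}\|_{H^3}$. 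This controls $\|\partial_1\psi\|_{H^2}$ on $[\tfrac{L_0}{2},L_1]\times\mathbb{E}$. For pure $x'$-regularity, solve \eqref{f15} as a Neumann Poisson problem on $\mathbb{E}$ for each fixed $x_1$; since $H(x_1,\cdot)\in H^1(\mathbb{E})$ with a norm already controlled, the $H^3$ Neumann regularity estimate and integration in $x_1$ give $\|\nabla_{x'}^3\psi\|_{L^2([L_0/2,L_1]\times\mathbb{E})}$. Merging with the $H^3$-estimate near the entrance furnished by Lemma \ref{H3}, using Lemma \ref{gloH2} to bound $\|\psi\|_{H^2}$, and absorbing the $(\epsilon+\delta_0)\|\psi\|_{H^3}$ term on the left (legitimate because $\psi\in H^4$ by Lemma \ref{H4first}) yields \eqref{f3}.

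\emph{Proof of \eqref{f4}.} One more iteration. Set $w_2=\partial_1^2\psi$; differentiating \eqref{li1} twice in $x_1$ produces a BVP for $w_2$ with the same principal part and first-order coefficient $\bar k_1+2\partial_1 k_{11}$, whose Friedrichs negativity now follows from Lemma \ref{bkg-coe} at $j=2,3$ together with the smallness argument above. Cutting off with $\eta$ and applying the estimate \eqref{f3}, now already available with constant $C_\ast$, gives $\|\eta w_2\|_{H^2}\le C_\ast\|F_2\|_{H^1}$, where $F_2$ collects $\partial_1^2 F$ and commutator terms bounded by $C_\ast\|F\|_{H^3}+C_\ast\|\psi\|_{H^3}+C_\ast(\epsilon+\delta_0)\|\psi\|_{H^4}$. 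Neumann $H^4$ regularity for the Poisson equation \eqref{f15} in $x'$ for each fixed $x_1$ promotes this to the full $H^4$ norm on $[L_0/2,L_1]\times\mathbb{E}$, and combination with Lemma \ref{H3} near the entrance plus absorption of the $(\epsilon+\delta_0)\|\psi\|_{H^4}$ term on the left yields \eqref{f4}.

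\emph{Main obstacle.} The delicate part is verifying, at each differentiation level $m\in\{1,2\}$, that the first-order coefficient $\bar k_1+m\,\partial_1 k_{11}$ still satisfies a Friedrichs-type inequality of the form $2(\bar k_1+m\partial_1 k_{11})\pm\partial_1 k_{11}\le-\tfrac12\kappa_\ast$ required to invoke the energy identity of Lemma \ref{H1e}. This forces us to use Lemma \ref{bkg-coe} at its extreme index $j=3$ and to exploit the smallness of $\partial_1 k_{11}-\bar k_{11}'$ in $L^\infty$, which in turn is precisely where the $H^3$-regularity of the coefficients (via the Sobolev embedding $H^3\hookrightarrow W^{1,\infty}$ in three dimensions) is used rather than a $C^3$ or $C^4$ pointwise assumption. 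Propagating the boundary-compatibility identities \eqref{coe104} to the differentiated equations so that the boundary integrals on $\Gamma_w$ continue to vanish is the remaining bookkeeping burden.
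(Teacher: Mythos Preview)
Your proposal is essentially correct and follows the same strategy as the paper: differentiate in $x_1$, cut off near the entrance, apply the lower-order energy estimate (whose validity for the modified first-order coefficient $\bar{k}_1+m\partial_1 k_{11}$ is checked via Lemma~\ref{bkg-coe} and the smallness $\|\partial_1 k_{11}-\bar{k}_{11}'\|_{L^\infty}\le C_*(\epsilon+\delta_0)$), recover the pure-$x'$ regularity by Poisson--Neumann estimates on the slices $\mathbb{E}$, merge with the subsonic estimate of Lemma~\ref{H3}, and absorb the $(\epsilon+\delta_0)\|\psi\|_{H^{k}}$ term.

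Two minor points of bookkeeping. First, when you write ``applying Lemma~\ref{gloH2} to $\tilde{w}_1$'' you are really reproducing its proof for the equation \eqref{f00}, which means looking at $\partial_1\tilde{w}_1$ and invoking the $H^1$ estimate of Lemma~\ref{exist2}; the paper does exactly this but phrases it as working directly with $w_2=\partial_1^2\psi$ (equation \eqref{f20}) and then $w_3=\partial_1^3\psi$ for \eqref{f4}. Second, your sentence ``applying the estimate \eqref{f3} \ldots gives $\|\eta w_2\|_{H^2}$'' is a slip---\eqref{f3} would give $H^3$; what you need there is the $H^2$ machinery of Lemma~\ref{gloH2} applied to the $w_2$-problem (equivalently, the $H^1$ estimate applied to $\eta w_3$), exactly as the paper does in \eqref{f32}--\eqref{f34}. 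Neither point affects the substance of the argument.
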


\begin{proof}

Note that $w_2=\p_{1}w_1$ solves
\be\label{f20}\begin{cases}
\sum_{i,j=1}^3 k_{ij} \p_{ij}^2 w_2 + (\bar{k}_1+2\p_1 k_{11})\p_1 w_2+ 4 \sum_{i=2}^3 \p_1 k_{1i}\p_i w_2= F_2, \ \text{in }\Om,\\
(n_2\p_2+n_3\p_3)w_2(x_1,x')=0,\ \ \forall (x_1,x')\in \Ga_w,
\end{cases}\ee
where
\be\no
F_2=\p_1^2 F-(2\bar{k}_1'+\p_1^2 k_{11}) w_2 -2\sum_{i=2}^3 \p_1^2 k_{1i} \p_i w_1 -\bar{k}_1'' w_1-\sum_{i,j=2}^3\big(2\p_1 k_{ij}\p_{ij}^2 w_1 + \p_1^2 k_{ij}\p_{ij}^2 \psi\big).
\ee

Then $\tilde{w}_2= \eta w_2$ solves
\be\label{f21}\begin{cases}
\sum_{i,j=1}^3 k_{ij}\p_{ij}^2 \tilde{w}_2 + (\bar{k}_1+2\p_1 k_{11}) \p_{1} \tilde{w}_2 +4 \sum_{i=2}^3 \p_1 k_{1i}\p_i\tilde{w}_2=\tilde{F}_2,\ \text{in }\ \Omega,\\
\tilde{w}_2(L_0,x')=0,\ \ \text{on }\ \ x'\in \mb{E},\\
(n_2\p_2+n_3\p_3)\tilde{w}_2(x_1,x')=0,\ \forall (x_1,x')\in \Ga_w
\end{cases}\ee
where
\be\no
\tilde{F}_2=\eta F_2 + \eta'\big(2\sum_{i=1}^3 k_{1i}\p_i w_2 + (\bar{k}_1+2\p_1 k_{11})w_2\big) + \eta'' k_{11} w_2.
\ee
Note that if $0<\delta_0\leq \delta_*$ in Lemma \ref{coe1}, then there holds that for any $(x_1,x')\in\Omega$
\be\no
&& 2(\bar{k}_1+2\p_1 k_{11})-\p_{1} k_{11}\leq 2\bar{k}_1+3\bar{k}_{11}'+3\|\p_{1} k_{11}-\bar{k}_{11}'\|_{L^{\infty}}\leq -\kappa_*<0,\\\no
&& 2(\bar{k}_1+2\p_1 k_{11})+\p_{1} k_{11}\leq 2\bar{k}_1+5\bar{k}_{11}'+5\|\p_{1} k_{11}-\bar{k}_{11}'\|_{L^{\infty}}\leq -\kappa_*<0.
\ee
As in Lemma \ref{exist2}, we know $\tilde{w}_2$ is the unique $H^2(\Omega)$ solution to \eqref{f21} with
\be\no
&&\bigg(\int_{\frac{L_0}{2}}^{L_1}\iint_{\mb{E}} |\nabla w_2|^2 dx' dx_1\bigg)^{\frac{1}{2}}\leq \|\tilde{w}_2\|_{H^2(\Omega)}\leq C_*\|\tilde{F}_2\|_{L^2(\Omega)}\\\no
&&\leq C_*\bigg(\|F\|_{H^2}+\|(2\bar{k}_1'+\bar{k}_{11}'')w_2\|_{L^2}+\|\p_{1}^2(k_{11}-\bar{k}_{11})\|_{H^1}\|w_2\|_{H^1}+\|\bar{k}_{1}''\|_{L^\infty}\|w_1\|_{L^2}\\\no
&&\quad+ \sum_{i=2}^3 \|\p_1^2 k_{1i}\|_{H^1}\|\p_i w_1\|_{H^1}+\sum_{i,j=2}^32\|\p_1k_{ij}\|_{H^2}\|\p_{ij}^2 w_1\|_{L^2}+\|\p_1^2 k_{ij}\|_{H^1}\|\p_{ij}^2\psi\|_{H^1}\\\no
&&\quad+ \sum_{i=1}^3 \|k_{1i}\|_{H^2}\|\eta'\p_i w_2\|_{L^2}+ \|\bar{k}_1+2 \p_1 k_{11}\|_{H^2}\|\eta' w_2\|_{L^2}+\|k_{11}\|_{H^2} \|\eta'' w_2\|_{L^2}\bigg)\\\label{f22}
&&\leq C_*(\|F\|_{H^2(\Omega)}+(\epsilon+\delta_0)\|\psi\|_{H^3(\Omega)}),
\ee
where one has used \eqref{H1} and the inequality $\|fg\|_{L^2(\Om)}\leq \|f\|_{H^1(\Om)}\|g\|_{H^1(\Om)}$.

For each fixed $x_1\in [L_0,L_1]$, it follows from \eqref{f1} that
\be\label{f23}\begin{cases}
\sum_{i=2}^3 \p_i^2 w_1(x_1,x') =H_1(x_1,x'), \ \ &\text{in }\mb{E},\\
(n_2\p_2+n_3\p_3)w_1(x_1,x')=0,\ \ \ &\forall (x_1,x')\in \Ga_w,
\end{cases}\ee
where
\be\no
&&H_1:= \p_1 F-\bar{k}_1' \p_1\psi-\sum_{i,j=2}^3 \p_1 k_{ij} \p_{ij}^2 \psi- (\bar{k}_1+\p_1 k_{11})\p_{1}w_1- 2\sum_{i=2}^3 \p_1 k_{1i} \p_i w_1\\\no
&&\q\q \q\q- k_{11}\p_1^2 w_1 -2 \sum_{i=2}^3 k_{1i}\p_{1i}^2 w_1 - \sum_{i,j=2}^3(k_{ij}-\delta_{ij}) \p_{ij}^2 w_1.
\ee
Then it follows from \eqref{f16}-\eqref{f17} that
\be\label{f24}
&&\|\nabla_{x'}^2 w_1\|_{L^2(\Om)}\leq C(\mb{E})\|H_1\|_{L^2(\Omega)}\\\no
&&\leq C(\mb{E})\bigg(\|F\|_{H^1}+\|\bar{k}_1'\p_1\psi\|_{L^2}+ \sum_{i,j=2}^3\|\p_1 k_{ij}\|_{L^{\infty}}\|\p_{ij}^2\psi\|_{L^2}+\|\bar{k}_1+\p_1 k_{11}\|_{L^{\infty}}\|\p_1 w_1\|_{L^2}\\\no
&&\q+ \sum_{i=2}^3\|\p_1 k_{1i}\|_{L^{\infty}}\|\p_i w_1\|_{L^2}+ \sum_{i=1}^3\|k_{1i}\|_{L^{\infty}}\|\p_{1i}^2 w_1\|_{L^2}+ \sum_{i,j=2}^3 \|k_{ij}-\delta_{ij}\|_{L^{\infty}}\|\p_{ij}^2 w_1\|_{L^2}\bigg)\\\no
&&\leq C_*(\|F\|_{H^2}+(\epsilon+\delta_0)\|\psi\|_{H^3}),
\ee
where \eqref{H31} and \eqref{f22} have been used. Furthermore, one can conclude from \eqref{H31},\eqref{f15}, \eqref{f22} and \eqref{f24} that
\be\no
&&\|\nabla_{x'}^3 \psi\|_{L^2(\Omega)}\\\no
&&\leq C(\mb{E})\bigg\|F- k_{11}\p_1^2\psi- 2\sum_{i=2}^3 k_{1i}\p_{1i}^2\psi-\bar{k}_1\p_1\psi- \sum_{i,j=2}^3(k_{ij}-\delta_{ij})\p_{ij}^2\psi\bigg\|_{H^1}\\\no
&&\leq C_*(\|F\|_{H^1}+ \sum_{i=1}^3 \|k_{1i}\|_{H^2}\|\p_{1i}^2\psi\|_{H^1}+\sum_{i,j=2}^3\|k_{ij}-\delta_{ij}\|_{H^2}\|\p_{ij}^2\psi\|_{H^1})\\\label{f25}
&&\leq C_*(\|F\|_{H^2(\Omega)}+(\epsilon+\delta_0)\|\psi\|_{H^3(\Omega)}).
\ee
Collecting the estimates \eqref{H31}, \eqref{f2}, \eqref{f22}, \eqref{f24} and \eqref{f25} leads to
\be\no
\|\psi\|_{H^3(\Omega)}\leq C_*(\|F\|_{H^2(\Omega)}+(\epsilon+\delta_0)\|\psi\|_{H^3(\Omega)}).
\ee
Choosing $\epsilon+\delta_0$ small enough so that $C_*(\epsilon+\delta_0)\leq \frac12$, one can get \eqref{f3} immediately.

Next, note that $w_3=\p_1 w_2$ solves
\be\no\begin{cases}
\sum_{i,j=1}^3 k_{ij}\p_{ij}^2 w_3 + (\bar{k}_1+3\p_1 k_{11})\p_1 w_3 + 6 \sum_{i=2}^3 \p_1k_{1i}\p_i w_3= F_3,\ \ \text{in }\ \Om,\\
(n_2\p_2+n_3\p_3) w_3(x_1,x')=0,\ \ \ (x_1,x')\in \Ga_w,
\end{cases}\ee
where
\be\no
&&F_3=\p_1^3 F-3(\bar{k}_1'+\p_1^2 k_{11})w_3-(3\bar{k}_{1}''+\p_1^3 k_{11})w_2-\bar{k}_1^{(3)} w_1\\\no
&&-\sum_{i=2}^3 (6\p_1^2 k_{1i}\p_i w_2+ 2\p_1^3 k_{1i}\p_i w_1)-\sum_{i,j=2}^3 (3\p_1 k_{ij}\p_{ij}^2 w_2+3\p_1^2 k_{ij}\p_{ij}^2 w_1+\p_1^3 k_{ij}\p_{ij}^2 \psi).
\ee
Thus $\tilde{w}_3= \eta w_3$ solves
\be\label{f32}\begin{cases}
\sum_{i,j=1}^3 k_{ij}\p_{ij}^2 \tilde{w}_3 + (\bar{k}_1+3\p_1 k_{11}) \p_{1} \tilde{w}_3 +6 \sum_{i=2}^3 \p_1 k_{1i}\p_i\tilde{w}_2=\tilde{F}_3,\ \text{on }\ \Omega,\\
\tilde{w}_3(L_0,x')=0,\ \ \text{on }\ \ x'\in \mb{E},\\
(n_2\p_2+n_3\p_3)\tilde{w}_3(x_1,x')=0,\ \forall (x_1,x')\in \Ga_w
\end{cases}\ee
where
\be\no
&&\tilde{F}_3=\eta F_3+ \eta'\big(2\sum_{i=1}^3 k_{1i}\p_i w_3 + (\bar{k}_1+2\p_1 k_{11})w_3\big) + \eta'' k_{11} w_3.
\ee
Note that if $0<\delta_0\leq \delta_*$ in Lemma \ref{coe1}, then there holds that for any $(x_1,x')\in\Omega$
\be\no
&& 2(\bar{k}_1+3\p_1 k_{11})-\p_{1} k_{11}\leq 2\bar{k}_1+3\bar{k}_{11}'+3\|\p_{1} k_{11}-\bar{k}_{11}'\|_{L^{\infty}}\leq -\kappa_*<0,\\\no
&& 2(\bar{k}_1+3\p_1 k_{11})+\p_{1} k_{11}\leq 2\bar{k}_1+5\bar{k}_{11}'+5\|\p_{1} k_{11}-\bar{k}_{11}'\|_{L^{\infty}}\leq -\kappa_*<0.
\ee
Then as in Lemma \ref{exist2}, one can show that $\tilde{w}_3$ is the unique $H^2(\Omega)$ strong solution to \eqref{f32} with the following estimate
\be\no
&&\bigg(\int_{\frac{L_0}{2}}^{L_1}\iint_{\mb{E}} |\nabla w_3|^2 dx' dx_1\bigg)^{\frac{1}{2}}\leq \|\tilde{w}_3\|_{H^2(\Omega)}\leq C_*\|\tilde{F}_3\|_{L^2(\Omega)}\\\no
&&\leq C_*\bigg(\|F\|_{H^3}+ \|(\bar{k}_1'+\bar{k}_{11}'') w_3\|_{L^2} + \|\p_1^2(k_{11}-\bar{k}_{11})\|_{H^1}\|w_3\|_{H^1}+\|(\bar{k}_1''+\bar{k}_{11}^{(3)}) w_2\|_{L^2}\\\no
&&\q +\|\p_1^3(k_{11}-\bar{k}_{11})\|_{L^2}\|w_2\|_{L^{\infty}}+ \sum_{i=2}^3\|\p_1^2 k_{1i}\|_{H^1}\|\p_i w_2\|_{H^1} + \|\p_1^3 k_{1i}\|_{L^2}\|\p_i w_1\|_{L^{\infty}} \\\no
&&\q + \sum_{i,j=2}^3 \|\p_1 k_{ij}\|_{L^{\infty}}\|\p_{ij}^2 w_2\|_{L^2}+ \|\p_1 k_{ij}\|_{H^1}\|\p_{ij}^2 w_1\|_{H^1}+ \|\p_1^3 k_{ij}\|_{L^2}\|\p_{ij}^2 \psi\|_{L^{\infty}}\bigg)\\\no
&&\leq C_*(\|F\|_{H^3(\Om)}+ (\epsilon+\delta_0)\|\psi\|_{H^4(\Om)}).
\ee
Combining this with \eqref{H31} gives
\be\label{f34}
\|\nabla \p_1^3\psi\|_{L^2(\Omega)}\leq C_*(\|F\|_{H^3(\Om)}+ (\epsilon+\delta_0)\|\psi\|_{H^4(\Om)}).
\ee
Rewrite \eqref{f20} for each fixed $x_1\in [L_0,L_1]$ as
\be\no\begin{cases}
\sum_{i=2}^3\p_i^2 w_2(x_1,x')= H_2(x_1,x'), \ \ &\text{in }\mb{E},\\
(n_2\p_2+n_3\p_3)w_2(x_1,x')=0,\ \ \ &\forall (x_1,x')\in \Ga_w,
\end{cases}\ee
where
\be\no
&&H_2=F_2-(\bar{k}_1+2\p_1 k_{11})\p_1 w_2-\sum_{i=2}^3 (4\p_1 k_{1i}\p_i w_2+2k_{1i}\p_{1i}^2 w_2)\\\no
&&\q\q\q\q -k_{11}\p_1^2 w_2-\sum_{i,j=2}^3 (k_{ij}-\delta_{ij})\p_{ij}^2 w_2.
\ee
Then arguing as above, one can get from \eqref{f2} and \eqref{f3} that
\be\no
&&\|\nabla_{x'}^2 w_2\|_{L^2(\Om)}\leq C(\mb{E})\|H_2\|_{L^2(\Omega)}\leq C\bigg(\|F_2\|_{L^2} +\|\bar{k}_1+2\p_1 k_{11}\|_{L^{\infty}}\|\p_1 w_2\|_{L^2}\\\no
&&\q + \sum_{i=2}^3\|\p_1 k_{1i}\|_{L^{\infty}}\|\p_i w_2\|_{L^2} + \sum_{i=1}^3 \|k_{1i}\|_{L^{\infty}}\|\p_{1i}^2 w_2\|_{L^2}+ \sum_{i,j=2}^3\|k_{ij}-\delta_{ij}\|_{L^{\infty}}\|\p_{ij}^2 w_2\|_{L^2}\bigg)\\\no
&&\leq C_*\bigg(\|F\|_{H^2}+\|2\bar{k}_1'+\p_1^2 k_{11}\|_{H^1}\|w_2\|_{H^1}+\sum_{i=2}^3\|\p_1^2 k_{1i}\|_{H^1}\|\p_i w_1\|_{H^1}\\\no
&&\q+\sum_{i,j=2}^3\|\p_1 k_{ij}\|_{L^{\infty}}\|\p_{ij}^2 w_1\|_{L^2}+\|\p_1^2 k_{ij}\|_{H^1}\|\p_{ij}^2 \psi\|_{H^1}\bigg)+C_*(\|F\|_{H^3}+ (\epsilon+\delta_0)\|\psi\|_{H^4})\\\label{f38}
&&\leq C_*(\|F\|_{H^3(\Om)}+ (\epsilon+\delta_0)\|\psi\|_{H^4(\Om)}).
\ee
On the other hand, \eqref{f23} implies that
\be\no
&&\|\nabla_{x'}^3 w_1\|_{L^2(\Omega)}\leq C(\mb{E})\|H_1\|_{H^1(\Omega)}\\\no
&&\leq C_*\bigg(\|F\|_{H^2}+\|\p_1\psi\|_{H^1}+\sum_{i,j=2}^3\|\p_1 k_{ij}\|_{H^2}\|\p_{ij}^2\psi\|_{H^1}+\|\bar{k}_1+\p_1 k_{11}\|_{H^2}\|\p_1 w_1\|_{H^1}\\\no
&&\q+ \sum_{i=2}^3\|\p_1 k_{1i}\|_{H^2}\|\p_i w_1\|_{H^1}+ \sum_{i=1}^3 \|k_{1i}\|_{H^2}\|\p_{1i}^2 w_1\|_{H^1}+ \sum_{i,j=2}^3 \|k_{ij}-\delta_{ij}\|_{H^2}\|\p_{ij}^2 w_1\|_{H^1}\bigg)\\\label{f39}
&&\leq C_*(\|F\|_{H^3(\Om)}+ (\epsilon+\delta_0)\|\psi\|_{H^4(\Om)}),
\ee
where one has used \eqref{f34} and \eqref{f38} to control the term $\|\nabla \p_1 w_1\|_{H^1}$.

Finally, one may conclude from \eqref{f15} that
\be\no
&&\|\nabla_{x'}^4 \psi\|_{L^2(\Om)}\leq C(\mb{E})\|H\|_{H^2(\Omega)}\\\no
&&\leq C_*\bigg(\|F\|_{H^2}+ \sum\limits_{i=1}^3 \|k_{1i}\|_{H^2}\|\p_{1i}^2\psi\|_{H^2}+ \|\bar{k}_1\|_{H^2}\|\p_1\psi\|_{H^2}+ \sum\limits_{i,j=2}^3 \|k_{ij}-\delta_{ij}\|_{H^2}\|\p_{ij}^2\psi\|_{H^2}\bigg)\\\label{f40}
&&\leq C_*(\|F\|_{H^3(\Om)}+ (\epsilon+\delta_0)\|\psi\|_{H^4(\Om)}),
\ee
where \eqref{f38} and \eqref{f39} are employed to control the terms $\|\nabla \p_1\psi\|_{H^2}$.

Collecting all the estimates \eqref{f3} and \eqref{f34}-\eqref{f40} leads to
\be\no
\|\psi\|_{H^4(\Omega)}\leq C_*(\|F\|_{H^3(\Om)}+ (\epsilon+\delta_0)\|\psi\|_{H^4(\Om)}),
\ee
from which \eqref{f4} follows if $\epsilon+\delta_0$ is small enough so that $C_*(\epsilon+\delta_0)\leq \frac 12$.

\end{proof}

\subsection{Proof of Theorem \ref{3dirro}.}\label{proof}

We are now ready to prove Theorem \ref{3dirro}. For any given $\hat{\psi} \in \Sigma_{\de_0}$, by Lemmas \ref{coe1}, \ref{exist2} and Lemma \ref{H4}, there exists a unique solution $\psi \in H^4(\Om)$ to \eqref{li0} with the estimate
\be\no
\| \psi \|_{H^4(\Om)} \le C_*\|F(\na \hat{\psi})\|_{H^3(\Om)} \le \mc{C} \|F(\na \hat{\psi})\|_{H^3(\Om)}.
\ee
Here the positive constant $C_*$ depends only on the $H^3(\Om)$ norms of the coefficients $k_{ij}, i,j=1,2,3$, $\bar{k}_1$, which can be bounded by a constant $\mc{C}$ depending on the $C^3 ([L_0, L_1])$ norm of $\bar{k}_{11}$, $\bar{k}_1$ and the boundary data. In the following, the constant $\mc{C}$ will always denote a generic constant depending only on the background solutions and the boundary data.

Recall the definition of $F(\na \hat{\psi})$ in \eqref{f0}. Since the support of $\eta_0 (x_1)$ defined in \eqref{eta} is contained in $[L_0, \f9{10} L_0]$, according to the $H^4$ estimate \eqref{H31} in Lemma \ref{H3} and the estimates in Lemma \ref{H4}, the following improved estimate holds
\be\no
&& \|\psi\|_{H^4(\Om)}\leq\mc{C}\bigg(\| F_0(\na \hat{\psi} + \epsilon\na \psi_0) \|_{H^3(\Om)}\\\no
&&\q\q\q\q\q+\epsilon\sum_{i,j=1}^3 \|k_{ij}(\na \hat{\psi}+\epsilon\nabla \psi_0)\p_{ij}^2 \psi_0 \|_{H^2(\Om)}+\epsilon\|\bar{k}_1 \p_1\psi_0\|_{H^2(\Om)}\bigg) \\\no
&& \le
\mc{C} ( \eps + \| \hat{\psi} \|_{H^4(\Om)}^2 + \epsilon \|h_0\|_{H^3(\mb{E})} ) \le \mc{C} (\eps + \de_0^2).
\ee

Let $\de_0 = \sqrt{\epsilon}$. Then if $0 < \epsilon \le \eps_0 = \min \{ \f 1{4\mc{C}^2},\epsilon_*, \de_*^2 \}$, one has
\be\no
\| \psi \|_{H^4(\Om)} \le \mc{C}(\eps + \de_0^2) = 2 \mc{C} \eps \le \de_0.
\ee
Thus $\psi \in \Sigma_{\de_0}$ and one can define an operator $\mc{T} \hat{\psi } = \psi $, which maps $\Sigma_{\de_0}$ to itself. It remains to show that the mapping $\mc{T}$ is contractive in a low order norm for a sufficiently small $\eps_0$. Suppose that $\psi^{(m)} = \mc{T} \hat{\psi}^{(m)}$ $(m= 1, 2)$ for any $\hat{\psi}^{(1)}$, $\hat{\psi}^{(2)} \in \Sigma_{\de_0}$. Then it holds that
\be\no\begin{cases}
\sum_{i,j=1}^3 (k_{ij}(\nabla \hat{\psi}^{(1)})\p_{ij}^2 + \bar{k}_1\p_1) (\psi^{(1)} - \psi^{(2)}) = F(\na \hat{\psi}^{(1)}) - F(\na \hat{\psi}^{(2)}) \\
\q\q\q\q\q  - \sum_{i,j=1}^3 (k_{ij}(\nabla \hat{\psi}^{(1)})-k_{ij}(\nabla \hat{\psi}^{(2)}))\p_{ij}^2 \psi^{(2)}, \\
(\psi^{(1)} - \psi^{(2)}) (L_0, x') = 0, \q \forall x'\in \mb{E},\\
(n_2\p_2+n_3\p_3)(\psi^{(1)} - \psi^{(2)})(x_1,x')= 0, \q \forall (x_1,x')\in \Ga_w.
\end{cases}\ee
Since $\psi^{(m)}$, $\hat{\psi}^{(m)} \in \Sigma_{\de_0}$, for $m= 1, 2$, the $H^1$ estimate in Lemma \ref{H1e} yields that
\be\no
&& \| \mc{T} \hat{\psi}^{(1)} - \mc{T} \hat{\psi}^{(2)} \|_{H^1(\Om)} \\\no
&&\leq C_* \| F(\na \hat{\psi}^{(1)}) - F(\na \hat{\psi}^{(2)}) - \sum_{i,j=1}^3 (k_{ij}(\nabla \hat{\psi}^{(1)})-k_{ij}(\nabla \hat{\psi}^{(2)})) \psi^{(2)} \|_{L^2(\Om)} \\\no
&& \le \mc{C} \de_0 \| \hat{\psi}^{(1)} -  \hat{\psi}^{(2)} \|_{H^1(\Om)}
\le \f 12 \|  \hat{\psi}^{(1)}-\hat{\psi}^{(2)} \|_{H^1(\Om)} .
\ee
Thus $\mc{T}$ is a contractive mapping in $H^1(\Om)$ norm and there exists a unique $\psi \in \Sigma_{\de_0}$ such that $\mc{T} \psi = \psi$.

Finally, one can determine the locations of all the sonic points as follows. Recall
\be\no
|{\bf M}|^2\co \b|\frac {{\bf u}}{c (\rho)}\b|^2=\frac{(\bar{u}+\p_1\psi+\epsilon \p_1 \psi_0)^2+|\nabla'(\psi+\epsilon \psi_0)|^2}{c^2(\bar{\rho})+(\gamma-1)(\epsilon \Phi_0-\bar{u}\p_1(\psi+\epsilon \psi_0)-\frac12 |\nabla(\psi +\epsilon \psi_0)|^2)}.
\ee
Let $\bar{M}(x_1)$ be the Mach number of the background flow at $x_1$ so that $\bar{M}^2(x_1)=\frac{\bar{u}^2(x_1)}{c^2(\bar{\rho}(x_1))}$. Then
\be\no
(\bar{M}^2)'(x_1)=-\frac{(\gamma+1)\bar{M}^2\bar{f}}{c^2(\bar{\rho})-\bar{u}^2}>0,\ \forall x_1\in [L_0,0)\cup (0,L_1]
\ee
and
\be\no
\lim_{x_1\to 0}(\bar{M}^2)'(x_1)=\gamma^{-\frac{1}{\gamma+1}} (\rho_0 u_0)^{-\frac{\gamma-1}{\gamma+1}}\sqrt{(\gamma+1) \bar{f}'(0)}>0.
\ee
Thus $|\bar{M}(L_0)|^2 <1,|\bar{M} (L_1)|^2>1,\min_{x_1 \in [L_0, L_1]}(\bar{M}^2)'(x_1)>0$. Since
\be\no
\| |{\bf M}|^2 - \bar{M}^2 \|_{C^{1, \f 12 } (\overline{\Om})} \le \| |{\bf M}|^2 - \bar{M}^2 \|_{H^3(\Om)}\leq \|\psi+\epsilon\psi_0\|_{H^4(\Omega)}\leq \mc{C}\epsilon.
\ee
then for sufficiently small $\eps$, there holds
\be\no
|{\bf M}(L_0,x') |^2 < 1, \q |{\bf M}(L_1,x')|^2 > 1, \q \forall x'\in \mb{E},
\ee
and
\be\no
\p_{x_1}(|{\bf M}(x_1, x')|^2)> 0, \q \forall (x_1,x') \in \Om.
\ee
Thus for any $x'\in \mb{E}$, there exists a unique $\xi (x') \in (L_0, L_1)$ such that $|{\bf M}(\xi(x'), x')|^2 = 1$. Also the implicit function theorem implies that $\xi \in C^1 (\ol{\mb{E}})$ satisfying \eqref{3dsonic}. The proof of Theorem \ref{3dirro} is completed.

\section{The existence and uniqueness of smooth transonic Beltrami flow}\label{belflow}

In this section, we prove Theorem \ref{beltrami}.
The first equation in \eqref{belt} can be rewritten as
\be\no
\sum_{i,j=1}^3( c^2(H)\delta_{ij}- u_i u_j) \p_i u_j + \sum_{i=1}^3 u_i\p_i\Phi=0,
\ee
where $c^2(H)=(\gamma-1)(B_0+\bar{\Phi}+ \epsilon \Phi_0-\frac{1}{2}|{\bf u}|^2)$. Set
\be\no
v_1=u_1-\bar{u}, \ \ v_2=u_2,\ \ v_3=u_3.
\ee

It follows from \eqref{belt} that
\be\label{belt1}\begin{cases}
\sum_{i,j=1}^3 k_{ij}({\bf v}) \p_i v_j+ \bar{k}_1(x_1) v_1= \mathbb{F}({\bf v}),\ \ &\text{in }\Omega,\\
\p_2 v_3-\p_3 v_2= \ka(x) \mc{H}({\bf v}) (\bar{u}+v_1),\ \ &\text{in }\Omega,\\
\p_3 v_1-\p_1 v_3= \ka(x) \mc{H}({\bf v}) v_2,\ \ &\text{in }\Omega,\\
\p_1 v_2-\p_2 v_1= \ka(x) \mc{H}({\bf v}) v_3,\ \ &\text{in }\Omega,\\
((\bar{u}+v_1)\p_1+ v_2\p_2 + v_3\p_3) \ka =0,\ \ &\text{in }\Omega,
\end{cases}\ee
where
\be\label{belt2}\begin{cases}
k_{11}({\bf v})=1-\bar{M}^2-\frac{1}{c^2(\bar{\rho})}\left((\gamma+1)\bar{u} v_1-(\gamma-1)\epsilon \Phi_0+\frac{\gamma+1}{2}v_1^2+\frac{\gamma-1}{2}(v_2^2+v_3^2)\right),\\
k_{ii}({\bf v})=1+\frac{\gamma-1}{c^2(\bar{\rho})}(\epsilon \Phi_0-\bar{u} v_1-\frac{1}{2}\sum_{j=1}^3 v_j^2)-\frac{v_i^2}{c^2(\bar{\rho})}, \ \ i=2,3,\\
k_{1i}({\bf v})=k_{i1}({\bf v})=-\frac{(\bar{u}+v_1)v_i}{c^2(\bar{\rho})}, \ \ i=2,3,\\
k_{23}({\bf v})=k_{23}({\bf v})=-\frac{v_2v_3}{c^2(\bar{\rho})},\ \ \ \ \bar{k}_1(x_1)=\frac{1}{c^2(\bar{\rho})}(\bar{f}-(\gamma+1)\bar{u}\bar{u}'),\\
\mathbb{F}({\bf v})=\frac{\bar{u}'}{c^2(\bar{\rho})}\bigg(-(\gamma-1)\epsilon \Phi_0+\frac{\gamma+1}{2}v_1^2+\frac{\gamma-1}{2}(v_2^2+v_3^2)\bigg)\\
\q\q\q\q\q\q-\frac{\epsilon}{c^2(\bar{\rho})}\bigg((\bar{u}+v_1)\p_1\Phi_0+\sum_{i=2}^3v_i\p_i\Phi_0\bigg),\\
\mc{H}({\bf v})=\left(\frac{\gamma-1}{\ga}\right)^{\frac{1}{\gamma-1}}\left(B_0+\bar{\Phi}+\epsilon \Phi_0-\frac12 ({\bar u}+v_1)^2-\frac12(v_2^2+v_3^2)\right)^{\frac{1}{\gamma-1}}.
\end{cases}\ee
The boundary conditions in \eqref{bbc} become
\be\no\begin{cases}
v_i(L_0,x')= \epsilon h_i(x'), \ \ &\forall x'\in \mb{E},\ \ i=2,3,\\
(n_2 v_2+n_3 v_3)(x_1,x')=0, \ \ &\forall (x_1,x')\in \Ga_w:=[L_0,L_1]\times \p \mathbb{E}.
\end{cases}\ee
Note that the function $\kappa$ is transported along the streamline and the boundary data for $\ka$ at the entrance is given by
\be\no
\ka(L_0,x')= \frac{\epsilon (\p_2 h_3-\p_3 h_2)(x')}{\{\mc{H}({\bf v})(\bar{u}+v_1)\}(L_0,x')}.
\ee
Theorem \ref{3dirro} suggests that one may look for the solution ${\bf v}$ in the class $(H^3(\Omega))^3$. In this class, by the trace theorem, the function $\kappa(L_0,\cdot)$ is in $H^{\frac{5}{2}}(\mb{E})$ and $\kappa$ belongs to $H^{\frac52}(\Omega)$ only. Thus there is a loss of $\frac12$ derivatives. However, we note that the flow near the entrance is purely subsonic and it is plausible to improve the regularity of the velocity to be $H^4$ near the entrance. This motivates the following definition of the class of solutions:
\be\no
&&\mc{S}=\bigg\{{\bf v}\in H^3(\Omega)\cap H^4(\Omega_{1/3}):\|{\bf v}\|_{H^3(\Om)}+\|{\bf v}\|_{H^4(\Om_{1/3})}\leq \delta_1, \\\no
&&\quad \q\q\q v_i(L_0,x')=\epsilon h_i(x'), i=2,3,\forall x'\in \mb{E},\\\no
&&\q\q\q\q (n_2\p_2+ n_3\p_3)v_1(x_1,x')=(n_2 v_2 + n_3 v_3)(x_1,x')=0 \ \text{on }\Ga_w\bigg\},
\ee
where $\delta_1>0$ is a small constant to be specified later and $\Omega_{1/3}=\{(x_1,x'): L_0<x_1<\frac13 L_0,x'\in\mb{E}\}$. Note that though $\mc{S}$ is not a linear space, but it can be represented as $\mc{S}=(0,\epsilon h_2, \epsilon h_3)+ \mc{S}_0$, where $\mc{S}_0$ is a Banach space. The condition $(n_2\p_2+ n_3\p_3)v_1(x_1,x')=0$ on $\Ga_w$ is included in $\mc{S}$ to guarantee the compatibility conditions for better regularity near $\{(L_0,x'):x'\in \p\mb{E}\}$. As seen later from \eqref{dc13}, this can be true due to the fact that the vorticity on the wall is zero since $(\p_2 h_3-\p_3 h_2)(x')=0$ on $\p\mb{E}$.

For any given $\hat{{\bf v}}\in \mc{S}$, we define an operator $\mc{T}$ mapping from $\mc{S}$ to itself by solving the problem
\be\label{beltl}\begin{cases}
\sum_{i,j=1}^3 k_{ij}(\hat{{\bf v}}) \p_i v_j+ \bar{k}_1(x_1) v_1= \mathbb{F}(\hat{{\bf v}}),\ \ &\text{in }\Omega,\\
\p_2 v_3-\p_3 v_2+ \p_1\Pi= \ka(x) \mc{H}(\hat{{\bf v}}) (\bar{u}+\hat{v}_1),\ \ &\text{in }\Omega,\\
\p_3 v_1-\p_1 v_3+ \p_2\Pi= \ka(x) \mc{H}(\hat{{\bf v}}) \hat{v}_2,\ \ &\text{in }\Omega,\\
\p_1 v_2-\p_2 v_1+ \p_3\Pi= \ka(x) \mc{H}(\hat{{\bf v}}) \hat{v}_3,\ \ &\text{in }\Omega,\\
((\bar{u}+\hat{v}_1)\p_1+ \hat{v}_2\p_2 + \hat{v}_3\p_3) \ka =0,\ \ &\text{in }\Omega,
\end{cases}\ee
with the boundary conditions
\be\label{beltbc}\begin{cases}
v_i(L_0,x')= \epsilon h_i(x'), \ \ &\forall x'\in \mb{E},\ i=2,3,\\
\p_1\Pi(L_0,x')=\p_1\Pi(L_1,x')=0,\ \ \ &\forall x'\in \mb{E},\\
(n_2 v_2+n_3 v_3)(x_1,x')=\Pi(x_1,x')=0, \ \ &\forall (x_1,x')\in \Ga_w:=[L_0,L_1]\times \p \mathbb{E}.
\end{cases}\ee
It should be noted that the boundary condition for the transport equation $\eqref{beltl}_5$ for $\kappa$ at $x_1=L_0$ is given in \eqref{ka21}, which follows from the equations \eqref{beltl} and \eqref{beltbc}.

Note that here we consider an enlarged system of the problem \eqref{belt1} since the source term in the curl system is not divergence free in general after the linearization. Such a technique is motivated by our previous work \cite{wx23a} on the structural stability of cylindrical transonic shock. The mixed boundary conditions for $\Pi$ are chosen such that one can easily verify the compatibility condition on $\{(L_0,x'):x'\in \p\mb{E}\}$. The problem \eqref{beltl} with boundary conditions \eqref{beltbc} is solved as follows:

{\bf Step 1.} Solvability of $\ka(x)$. Consider the problem
\be\label{ka21}\begin{cases}
\p_1 \ka + \sum_{i=2}^3 \frac{\hat{v}_i}{\bar{u}+ \hat{v}_1}\p_i \ka=0,\ \ &\text{in }\ \Om,\\
\ka(L_0,x')= \frac{\epsilon (\p_2 h_3-\p_3 h_2)(x')}{\{\mc{H}(\hat{{\bf v}})(\bar{u}+\hat{v}_1)\}(L_0,x')},\ \ \ &\forall x'\in \mb{E}.
\end{cases}\ee
Since ${\bf v}\in H^4(\Om_{1/3})$, $\kappa(L_0,x')\in H^{3}(\mb{E})$ due to the trace's theorem. By simple energy estimates and a density approximation argument as in \cite{seth16}, then there exists a unique classical solution $\kappa\in H^3(\Omega)$ to \eqref{ka21} with the estimate
\be\label{ka23}
\|\ka\|_{H^3(\Om)}\leq C\epsilon \bigg\|\frac{(\p_2 h_3-\p_3 h_2)(x')}{\{\mc{H}(\hat{{\bf v}})(\bar{u}+\hat{v}_1)\}(L_0,x')}\bigg\|_{H^{3}(\mb{E})}\leq \mc{C}\epsilon.
\ee

Since $(\p_2 h_3-\p_3 h_2)(x')=0$ for all $x'\in \p\mb{E}$, there holds
\be\label{ka22}
\kappa(x_1,x')=\p_1\kappa(x_1,x')=\p_1^2 \kappa(x_1,x')=\sum_{i=2}^3 (\hat{v}_i\p_i\kappa)(x_1,x')=0,\ \ \ \text{on }\Ga_w.
\ee

{\bf Step 2.} Solvability of $\Pi$. Consider the following problem
\be\label{pi1}\begin{cases}
\Delta \Pi= \p_1(\ka(x)\mc{H}(\hat{{\bf v}})(\bar{u}+\hat{v}_1))+ \sum_{i=2}^3 \p_i(\ka(x)\mc{H}(\hat{{\bf v}})\hat{v}_i),\ \ &\text{in }\Om,\\
\p_1\Pi(L_0,x')=\p_1\Pi(L_1,x')=0, \ \ &\forall x'\in\mb{E},\\
\Pi(x_1,x')=0,\ \ &\forall (x_1,x')\in \Ga_w.
\end{cases}\ee
The solvability of \eqref{pi1} follows from that for the problem \eqref{43} in the Appendix \S\ref{div-curl}. Indeed, by \eqref{ka22}, one has
\be\no
\bigg\{\p_1(\ka\mc{H}(\hat{{\bf v}})(\bar{u}+\hat{v}_1))+ \sum_{i=2}^3 \p_i(\ka\mc{H}(\hat{{\bf v}})\hat{v}_i)\bigg\}(x_1,x')=0\ \ \ \text{on }\ \Ga_w.
\ee
Thus corresponding to \eqref{431}, the compatibility condition for \eqref{pi1} holds on $\{(x_1,x'): x_1= L_0/L_1, x'\in\p\mb{E}\}$:
\be\no
\p_1\bigg(\p_1(\ka(x)\mc{H}(\hat{{\bf v}})(\bar{u}+\hat{v}_1))+ \sum_{i=2}^3 \p_i(\ka(x)\mc{H}(\hat{{\bf v}})\hat{v}_i)\bigg)(L_0/L_1,x')=0,\ \text{on }\p\mb{E}.
\ee
Then there exists a unique solution $\Pi\in H^4(\Omega)$ to \eqref{pi1} satisfying
\be\no
&&\|\Pi\|_{H^4(\Omega)}\leq C(\Omega)(\|\ka(x)\mc{H}(\hat{{\bf v}})(\bar{u}+\hat{v}_1)\|_{H^3(\Om)}+ \sum_{i=2}^3\|\ka(x)\mc{H}(\hat{{\bf v}})\hat{v}_i\|_{H^3(\Omega)})\\\no
&&\leq \mc{C}\|\ka\|_{H^3(\Omega)}\leq \mc{C} \epsilon,
\ee
where the estimate \eqref{ka23} is used. It follows from the last boundary condition in \eqref{pi1} that
\be\label{pi5}
\p_1^2\Pi(x_1,x')=(n_2\p_3\Pi-n_3\p_2\Pi)(x_1,x')=0,\ \ \text{on }\ \ \Ga_w.
\ee

{\bf Step 3.} Solve a divergence-curl system with mixed boundary conditions. Consider the following problem
\be\label{dc}\begin{cases}
\p_1 \dot{v}_1 + \p_2 \dot{v}_2 + \p_3 \dot{v}_3 =0,\ \ &\text{in }\ \Om,\\
\p_2 \dot{v}_3-\p_3 \dot{v}_2 =\ka(x) \mc{H}(\hat{{\bf v}}) (\bar{u}+\hat{v}_1)-\p_1\Pi,\ \ &\text{in }\ \Om,\\
\p_3 \dot{v}_1-\p_1 \dot{v}_3 =\ka(x) \mc{H}(\hat{{\bf v}}) \hat{v}_2-\p_2\Pi,\ \ &\text{in }\ \Om,\\
\p_1 \dot{v}_2-\p_2 \dot{v}_1 =\ka(x) \mc{H}(\hat{{\bf v}}) \hat{v}_3-\p_3\Pi,\ \ &\text{in }\ \Om,\\
\dot{v}_i(L_0,x')=\epsilon h_i(x'), \ \ i=2,3,\ \ &\forall x'\in \mb{E},\\
\dot{v}_1(L_1,x')=0,\ \ &\forall x'\in \mb{E},\\
(n_2\dot{v}_2 + n_3 \dot{v}_3)(x_1,x')=0,\ \ \ &\forall (x_1,x')\in \Ga_w.
\end{cases}\ee

The velocity field ${\bf \dot{v}}$ can be decomposed as ${\bf \dot{v}}=\tilde{{\bf {v}}} + \check{{\bf v}}$, where $\tilde{{\bf {v}}}=(\tilde{v}_1,\tilde{v}_2, \tilde{v}_3)^t$ and $\check{{\bf v}}=(\check{v}_1,\check{v}_2, \check{v}_3)^t$ solve
\be\label{dc11}\begin{cases}
\p_1 \tilde{v}_1 + \p_2 \tilde{v}_2 + \p_3 \tilde{v}_3 =0,\ \ &\text{in }\ \Om,\\
\p_2 \tilde{v}_3-\p_3 \tilde{v}_2 =\ka(x) \mc{H}(\hat{{\bf v}}) (\bar{u}+\hat{v}_1)-\p_1\Pi,\ \ &\text{in }\ \Om,\\
\p_3 \tilde{v}_1-\p_1 \tilde{v}_3 =\ka(x) \mc{H}(\hat{{\bf v}}) \hat{v}_2-\p_2\Pi,\ \ &\text{in }\ \Om,\\
\p_1 \tilde{v}_2-\p_2 \tilde{v}_1 =\ka(x) \mc{H}(\hat{{\bf v}}) \hat{v}_3-\p_3\Pi,\ \ &\text{in }\ \Om,\\
\tilde{v}_1(L_0,x')= \tilde{v}_1(L_1,x')=0,\ \ &\forall x'\in \mb{E},\\
(n_2\tilde{v}_2 + n_3 \tilde{v}_3)(x_1,x')=0,\ \ \ &\forall (x_1,x')\in \Ga_w
\end{cases}\ee
and  solves
\be\label{dc22}\begin{cases}
\p_1 \check{v}_1 + \p_2 \check{v}_2 + \p_3 \check{v}_3 =0,\ \ &\text{in }\ \Om,\\
\text{curl }\check{{\bf v}} =0,\ \ &\text{in }\ \Om,\\
\check{v}_i(L_0,x')=\epsilon h_i(x')-\tilde{v}_i(L_0,x'), \ \ i=2,3,\ \ &\forall x'\in \mb{E},\\
\check{v}_1(L_1,x')=0,\ \ &\forall x'\in \mb{E},\\
(n_2\check{v}_2 + n_3 \check{v}_3)(x_1,x')=0,\ \ \ &\forall (x_1,x')\in \Ga_w,
\end{cases}\ee
respectively. According to Proposition \ref{dcp}, the solvability condition for the curl system in \eqref{dc11} follows from \eqref{pi1}. It remains to verify the compatibility condition \eqref{410} for \eqref{dc11}, which holds due to
\be\no
&&\p_1(\ka\mc{H}(\hat{{\bf v}}) (\bar{u}+\hat{v}_1)-\p_1\Pi)(L_0,x')\\\no
&&=\ka \p_1(\mc{H}(\hat{{\bf v}}) (\bar{u}+\hat{v}_1))+\p_1\ka \mc{H}(\hat{{\bf v}}) (\bar{u}+\hat{v}_1)-\p_1^2\Pi)(L_0,x')=0,\ \ \text{on }\ \ \p\mb{E},\\\no
&& \{ n_2(\ka(x) \mc{H}(\hat{{\bf v}}) \hat{v}_3-\p_3\Pi)-n_3(\ka(x) \mc{H}(\hat{{\bf v}}) \hat{v}_3-\p_3\Pi)\}(L_0,x')\\\no
&&=\ka\mc{H}(\hat{{\bf v}}) (n_2\hat{v}_3- n_3 \hat{v}_2)(L_0,x')-(n_2\p_3\Pi-n_3\p_2\Pi)(L_0,x')=0,\ \ \text{on }\ \ \p\mb{E},
\ee
where \eqref{ka22} and \eqref{pi5} have been used here.

Thus there exists a unique solution ${\bf \tilde{v}}\in H^4(\Omega)$ to \eqref{dc11} such that
\be\no
&&\|{\bf \tilde{v}}\|_{H^4(\Omega)}\leq C(\Omega)(\|\ka\mc{H}(\hat{{\bf v}}) (\bar{u}+\hat{v}_1)-\p_1\Pi\|_{H^3(\Om)}+\sum_{i=2}^3\|\ka\mc{H}(\hat{{\bf v}}) \hat{v}_i-\p_i\Pi\|_{H^3(\Om)})\\\label{dc12}
&&\leq C(\|\ka\|_{H^3(\Om)}+\|\Pi\|_{H^3(\Omega)})\leq \mc{C}\epsilon.
\ee
Furthermore, on $\Ga_w$ there holds
\be\no
&&0= n_2 \p_1 \tilde{v}_2 + n_3 \p_1 \tilde{v}_3= n_2(\p_2 \tilde{v}_1+ \ka \mc{H}(\hat{{\bf v}})\hat{v}_3-\p_3 \Pi)+ n_3 (\p_3 \tilde{v}_1- \ka \mc{H}(\hat{{\bf v}})\hat{v}_2+\p_2 \Pi)\\\no
&&= (n_2\p_2 \tilde{v}_1 + n_3 \p_3 \tilde{v}_1)+ \ka \mc{H}(\hat{{\bf v}})(n_2\hat{v}_3-n_3 \hat{v}_2)- (n_2\p_3\Pi-n_3\p_2 \Pi)(x_1,x').
\ee
Note that \eqref{ka22} and \eqref{pi5} imply
\be\label{dc13}
n_2\p_2 \tilde{v}_1 + n_3 \p_3 \tilde{v}_1=0\ \ \text{on } \Ga_w.
\ee
This, together with the first equation in \eqref{dc11}, implies that
\be\label{dc14}
(n_2\p_2+ n_3\p_3)(\p_2 \tilde{v}_2 + \p_3 \tilde{v}_3)(x_1,x')=0\ \ \ \ \text{on }\ \Ga_w.
\ee

Next, we solve the problem \eqref{dc22}. Note first that
\be\no
&&\p_2(\epsilon h_3(x')-\tilde{v}_3(L_0,x'))-\p_3(\epsilon h_2(x')-\tilde{v}_2(L_0,x'))\\\no
&&= \epsilon (\p_2 h_3-\p_3 h_2)(x')-(\p_2 \tilde{v}_3-\p_3 \tilde{v}_2)(L_0,x')\\\no
&&=\epsilon (\p_2 h_3-\p_3 h_2)(x')-(\kappa \mc{H}(\hat{{\bf v}})(\bar{u}+\hat{v}_1)+\p_1\Pi)(L_0,x')\equiv 0,\ \ \text{in }\mb{E}.
\ee


Since $\mathbb{E}\subset \mathbb{R}^2$ is simply-connected, there exists a unique $h(x')\in H^{9/2}(\mb{E})$ with zero mean $\iint_{\mb{E}} h(x') dx'=0$ such that $\p_i h(x')=\epsilon h_i(x')-\tilde{v}_i(L_0,x')$ for all $x'\in \mb{E}$ and $i=2,3$. Moreover,
\be\label{dc21}
&&\|h\|_{H^{9/2}(\mb{E})}=\|h\|_{L^2(\mb{E})}+\|\nabla'h\|_{H^{7/2}(\mb{E})}\leq C(\mb{E})\|\nabla'h\|_{H^{7/2}(\mb{E})}\\\no
&&\leq C(\mb{E})\sum_{i=2}^3\|\epsilon h_i-\tilde{v}_i(L_0,\cdot)\|_{H^{7/2}(\mb{E})}\leq \mc{C} \epsilon.
\ee
Since $\text{curl }\check{{\bf v}}\equiv 0$ in $\Om$, then $\check{{\bf v}}=\nabla \phi$ for some potential function $\phi$ and the problem \eqref{dc22} becomes
\be\label{dc23}\begin{cases}
\Delta \phi(x_1,x')=0,\ \ &\text{in }\ \Om,\\
\phi(L_0,x')= h(x'),\ \ \ &\forall x'\in \mb{E},\\
\p_1\phi(L_1,x')=0,\ \ \  &\forall x'\in \mb{E},\\
(n_2\p_2 + n_3 \p_3)\phi(x_1,x')=0,\ \ &\text{on }\ \Ga_w.
\end{cases}\ee
Suppose that there exists a smooth solution $\phi\in H^5(\Omega)$ to \eqref{dc23}, then $(n_2\p_2 + n_3 \p_3) \p_1^2\phi(x_1,x')=0$ on $\Ga_w$ and thus
\be\no
&&0=(n_2\p_2 + n_3 \p_3)(\p_1^2+ \p_2^2+ \p_3^2)\phi(L_0,x')\\\no
&&\q=(n_2\p_2 + n_3 \p_3)(\p_2^2+ \p_3^2)\phi(L_0,x')\\\label{dc24}
&&\q=(n_2\p_2 + n_3 \p_3)(\p_2^2+ \p_3^2) h(x') \ \ \ \ \text{on }\ \ \p\mb{E}.
\ee
For $h$ defined in \eqref{dc21}, the compatibility condition \eqref{dc24} follows from \eqref{bcp1} and \eqref{dc14}:
\be\no
\epsilon (n_2\p_2 + n_3 \p_3)(\p_2 h_2 + \p_3 h_3)(x')- (n_2\p_2 + n_3 \p_3)(\p_2 \dot{v}_2^1+\p_3 \dot{v}_3^1)(L_0,x')=0\ \ \ \text{on }\ \ \p\mb{E}.
\ee
The solution $\phi$ to \eqref{dc23} can be obtained as $\phi(x_1,x')=\sum_{m=0}^{\infty} s_m(x_1) b_m(x')$, where $s_m(x_1)$ solves
\be\label{dc25}\begin{cases}
s_m''-\lambda_m s_m(x_1)=0,\ \ \ \forall x_1\in [L_0,L_1],\\
s_m(L_0)=r_m,\ \ \ s_m'(L_1)=0
\end{cases}\ee
and $r_m=\iint_{\mb{E}} h(x') b_m(x') dx'$. Indeed, for $m\geq 1$ the function $s_m(x_1)$ has the form
\be\no
s_m(x_1)=\frac{r_m}{1+e^{2\sqrt{\lambda_m}(L_0-L_1)}}\bigg(e^{-\sqrt{\lambda_m}(x_1-L_0)}+ e^{\sqrt{\lambda_m}(x_1+L_0-2L_1)}\bigg).
\ee
Using the equation in \eqref{dc25}, one can get by simple calculations that
\be\no
&&\int_{L_0}^{L_1} |s_m^{(5)}|^2+\la_m |s_m^{(4)}|^2+\la_m^2 |s_m^{(3)}|^2 + 2 \la_m^3 |s_m''|^2 + 2 \la_m^4 |s_m'|^2+ \la_m^5 |s_m|^2 dx_1\\\no
&&=\int_{L_0}^{L_1}\la_m^4 |s_m'|^2+\la_m^5 |s_m|^2+\la_m^4 |s_m'|^2 + 2 \la_m^3 |s_m''|^2 + 2 \la_m^4 |s_m'|^2+ \la_m^5 |s_m|^2 dx_1\\\no
&&=2\lambda_m^3\int_{L_0}^{L_1} |s_m''|^2+2\lambda_m |s_m'|^2+ \la_m^2 |s_m|^2 dx_1=-4\lambda_m^4 s_m(L_0) s_m'(L_0)\\\no
&&=\frac{4\la_m^{\frac92}r_m^2}{1+e^{2\sqrt{\lambda_m}(L_0-L_1)}}(1-e^{2\sqrt{\lambda_m}(L_0-L_1)})\leq 4\la_m^{\frac92}r_m^2.
\ee
The estimate for $m=0$ can be derived easily. Therefore
\be\label{dc26}
&&\|\phi\|_{H^5(\Omega)}\leq C(\Omega)\sum_{m=0}^{\infty}\int_{L_0}^{L_1}\sum_{i=0}^5 \lambda_m^i |s_m^{(5-i)}(x_1)|^2 dx_1\\\no
&&\leq C\sum_{m=0}^{\infty} \lambda_m^{\frac{9}{2}} r_m^2\leq C\|h\|_{H^{\frac{9}{2}}(\mb{E})}\leq \mc{C}\epsilon.
\ee

{\bf Step 4.} Solvability of the velocity field. Define $w_i=v_i-\dot{v}_i, i=1,2,3$. Then it follows from \eqref{beltl},\eqref{beltbc} and \eqref{dc} that
\be\label{dc31}\begin{cases}
\sum_{i,j=1}^3 k_{ij}(\hat{{\bf v}}) \p_i w_j + \bar{k}_1(x_1) w_1 =\mathbb{F}(\hat{{\bf v}})-\sum_{i,j=1}^3 k_{ij}(\hat{{\bf v}}) \p_i \dot{v}_j - \bar{k}_1(x_1) \dot{v}_1,\ \text{in }\ \Om,\\
\text{curl }{\bf w}=0,\ \text{in }\ \Om,\\
w_2(L_0,x')=w_3(L_0,x')= 0,\ \forall x'\in \mb{E},\\
(n_2w_2 + n_3 w_3)(x_1,x')=0,\ \ \forall (x_1,x')\in \Ga_w.
\end{cases}\ee
In terms of a potential function $\psi$ of ${\bf w}$, i.e. ${\bf w}=\nabla \psi$, \eqref{dc31} becomes
\be\label{dc32}\begin{cases}
\displaystyle\sum_{i,j=1}^3 k_{ij}(\hat{{\bf v}}) \p_{ij}^2 \psi + \bar{k}_1(x_1) \p_1 \psi=F(\hat{{\bf v}})-\sum_{i,j=1}^3 k_{ij}(\hat{{\bf v}}) \p_i \dot{v}_j - \bar{k}_1(x_1) \dot{v}_1,\ \text{in }\ \Om,\\
\psi(L_0,x')=0,\ \ \forall x'\in \mb{E},\\
(n_2\p_2 + n_3 \p_3 )\psi(x_1,x')=0,\ \text{on }\Ga_w.
\end{cases}\ee

Since $\hat{{\bf v}}\in \mc{S}$, the coefficients $k_{ij}(\hat{{\bf v}}), i,j=1,2,3$ defined in \eqref{belt2} satisfy the estimates \eqref{coe100}-\eqref{coe103} in Lemma \ref{coe1} with $\nabla \psi+ \epsilon \nabla \psi_0$ replaced by $\hat{{\bf v}}$. The identity \eqref{coe104} can also be verified as in Lemma \ref{coe1} with $\nabla \psi_1$ replaced by $\hat{{\bf v}}$. It follows from Lemma \ref{exist2} that there exists a unique strong solution $\psi\in H^2(\Omega)$ to the problem \eqref{dc32}. To improve the regularity of $\psi$ near the the corner circle $\{(L_0,x'): x'\in \p\mb{E}\}$, as in Lemma \ref{H3}, one may rewrite \eqref{dc32} as
\be\no\begin{cases}
\p_1(\bar{k}_{11}\p_1\psi)+ \p_2^2\psi + \p_3^2 \psi= \mc{G}(x_1,x'), \ \ &\text{in }\ \Om,\\
\psi(L_0,x')=0,\ \ \ &\forall x'\in \mb{E},\\
(n_2\p_2 + n_3 \p_3 )\psi(x_1,x')=0,\ \ &\text{on }\Ga_w.
\end{cases}\ee
with
\be\no
&&\mc{G}=F(\hat{{\bf v}})-\sum_{i,j=1}^3 k_{ij}(\hat{{\bf v}}) \p_i \dot{v}_j - \bar{k}_1(x_1) \dot{v}_1- (k_{11}(\hat{{\bf v}})-\bar{k}_{11})\p_1^2 \psi + (\bar{k}_{11}'-\bar{k}_1)\p_1 \psi\\\no
&&\q\q- 2 \sum_{i=2}^3 k_{1i}(\hat{{\bf v}})\p_{1i}^2\psi -\sum_{i,j=2}^3 (k_{ij}(\hat{{\bf v}})-\delta_{ij})\p_{ij}^2\psi.
\ee
It remains to verify that $(n_2\p_2+ n_3 \p_3 )\mc{G}(L_0,x')=0$ holds on $\p\mb{E}$. Since $\hat{{\bf v}}\in \mc{S}$, $(n_2\p_2+n_3\p_3)\hat{v}_1=0$ holds on $\Ga_w$ and $\hat{v}_i(L_0,x')=\epsilon h_i(x')$ for $i=2,3$. These, together with \eqref{Phi1} and \eqref{bcp1}, yield that
\be\label{dc35}\begin{cases}
(n_2\p_2 + n_3 \p_3)\{F(\hat{{\bf v}})(L_0,x')\}=0,\\
(n_2\p_2 + n_3 \p_3)\{k_{ij}(\hat{{\bf v}})(L_0,x')\}=0,\ \ \text{on }\p\mb{E} \ \ \text{for }i,j=1,2,3,\\
k_{12}(\hat{{\bf v}})(L_0,x')=k_{13}(\hat{{\bf v}})(L_0,x')=0,\ \ \text{on }\ \p\mb{E}.
\end{cases}\ee
Note that ${\bf \dot{v}}=\tilde{{\bf v}}+ \check{{\bf v}}= \tilde{{\bf v}}+ \nabla \phi$, then it follows from \eqref{dc13} and the last condition in \eqref{dc23} that
\be\no\begin{cases}
(n_2\p_2+n_3\p_3)\dot{v}_1(x_1,x')=0, \ \ \text{on }\Ga_w\\
\dot{v}_i(L_0,x')= \epsilon h_i(x'),\ \ \ \forall x'\in \mb{E}, \ i=2,3.
\end{cases}\ee
Note that $\psi(L_0,x')=0$ on $\mb{E}$, thus $\sum_{i,j=2}^3 (k_{ij}(\hat{{\bf v}})-\delta_{ij})\p_{ij}^2\psi(L_0,x')=0$ on $\mb{E}$. These, together with \eqref{dc35}, imply that $(n_2\p_2+ n_3 \p_3 )\mc{G}(L_0,x')=0$ holds on $\p\mb{E}$. Therefore
\be\no
&&\|\psi\|_{H^4(\Omega)}+\|\psi\|_{H^5(\Omega_{1/3})}\leq C_*\|F(\hat{{\bf v}})-\sum_{i,j=1}^3 k_{ij}(\hat{{\bf v}}) \p_i \dot{v}_j - \bar{k}_1(x_1) \dot{v}_1\|_{H^3(\Omega)}\\\label{dc36}
&&\leq \mc{C} (\epsilon+\|\hat{{\bf v}}\|_{H^3(\Omega)}^2 + \|\tilde{{\bf v}}\|_{H^4(\Omega)}+\|\nabla \phi\|_{H^4(\Omega)})\leq \mc{C}(\epsilon+\delta_1^2).
\ee
The estimate of $\|\psi\|_{H^5(\Omega_{1/3})}$ can be derived as in Lemma \ref{li50}. In summary, by \eqref{dc12}-\eqref{dc13},\eqref{dc26} and \eqref{dc36}, one could verify that ${\bf v}= \tilde{{\bf v}}+\nabla \phi + \nabla\psi$ belongs to $\mc{S}$ and solves \eqref{beltl}-\eqref{beltbc} such that
\be\no
\|{\bf v}\|_{H^3(\Om)}+\|{\bf v}\|_{H^4(\Om_{1/3})}\leq \mc{C}(\epsilon + \delta_0^2).
\ee
Set $\de_1 = \sqrt{\epsilon}$. If $0 < \epsilon \le \epsilon_1 = \min \{ \f 1{4 M^2},\epsilon_*, \de_*^2\}$, then
\be\no
\|{\bf v}\|_{H^3(\Om)}+\|{\bf v}\|_{H^4(\Om_{1/3})}\leq \mc{C}(\epsilon + \delta_1^2)\leq \delta_1.
\ee
Thus the operator $\mc{T}$ maps $\mc{S}$ to itself. It suffices to show that $\mc{T}$ is a contraction in the weak norm $\|{\bf Y}\|_{H^2(\Omega)}+\|{\bf Y}\|_{H^3(\Omega_{1/3})}$. For any $\hat{{\bf v}}^m\in \mc{S}$ for $m=1,2$, denote the corresponding solutions to \eqref{ka21}, \eqref{pi1},\eqref{dc} and \eqref{dc31} by $\kappa_m, \Pi_m, \dot{{\bf v}}^m$ and $\psi_m$ when $\hat{{\bf v}}$ is replaced by $\hat{{\bf v}}^m$ for $m=1,2$, respectively. Set
\be\no
\hat{{\bf Y}}= \hat{{\bf v}}^1-\hat{{\bf v}}^2, \ \ \dot{{\bf Y}}=\dot{{\bf v}}^1-\dot{{\bf v}}^2, \ \ \psi=\psi_1-\psi_2,\ {\bf Y}={\bf v}^1-{\bf v}^2.
\ee
Then
\be\no\begin{cases}
\p_1 (\ka_1-\ka_2) + \sum_{i=2}^3 \frac{\hat{v}_i^1}{\bar{u}+ \hat{v}_1^1}\p_i (\ka_1-\ka_2)=-\sum_{i=2}^3 \bigg(\frac{\hat{v}_i^1}{\bar{u}+ \hat{v}_1^1}-\frac{\hat{v}_i^2}{\bar{u}+ \hat{v}_1^2}\bigg)\p_i \ka_2,\ \text{in }\ \Om,\\
(\ka_1-\ka_2)(L_0,x')= \epsilon (\p_2 h_3-\p_3 h_2)(x')\bigg(\frac{1}{\mc{H}(\hat{{\bf v}}^1)(\bar{u}+\hat{v}_1^1)}-\frac{1}{\mc{H}(\hat{{\bf v}}^2)(\bar{u}+\hat{v}_1^2)}\bigg)(L_0,x'),\ \forall x'\in \mb{E}.
\end{cases}\ee
Standard estimates lead to
\be\no
&&\|(\ka_1-\ka_2)\|_{H^2(\Omega)}\leq C\bigg(\sum_{i=2}^3\bigg\|\frac{\hat{v}_i^1}{\bar{u}+ \hat{v}_1^1}-\frac{\hat{v}_i^2}{\bar{u}+ \hat{v}_1^2}\bigg\|_{H^2(\Om)}\|\p_i\ka_2\|_{H^{2}(\Omega)}\\\no
&&\q\q+\epsilon \bigg\|(\p_2 h_3-\p_3 h_2)(x')\bigg(\frac{1}{\mc{H}(\hat{{\bf v}}^1)(\bar{u}+\hat{v}_1^1)}-\frac{1}{\mc{H}(\hat{{\bf v}}^2)(\bar{u}+\hat{v}_1^2)}\bigg)(L_0,x')\bigg\|_{H^2(\mb{E})}\bigg)\\\no
&&\leq \mc{C}\epsilon (\|\hat{{\bf Y}}(L_0,\cdot)\|_{H^2(\mb{E})}+\|\hat{{\bf Y}}\|_{H^2(\Omega)}) \leq \mc{C}\epsilon(\|\hat{{\bf Y}}\|_{H^2(\Omega)}+ \|\hat{{\bf Y}}\|_{H^3(\Omega_{1/3})}.
\ee

It follows from \eqref{pi1} that
\be\no
&&\|\Pi_1-\Pi_2\|_{H^3(\Om)}\leq C\bigg(\|\ka_1(x)\mc{H}(\hat{{\bf v}}^1)(\bar{u}+\hat{v}_1^1)-\ka_2(x)\mc{H}(\hat{{\bf v}}^2)(\bar{u}+\hat{v}_1^2)\|_{H^2(\Om)}\\\no
&&\q\q+\sum_{i=2}^3\|\ka_1(x)\mc{H}(\hat{{\bf v}^1})\hat{v}_i^1-\ka_2(x)\mc{H}(\hat{{\bf v}^2})\hat{v}_i^2\|_{H^2(\Om)}\bigg)\\\no
&&\leq \mc{C}(\|\ka_1-\ka_2\|_{H^2(\Om)}+\|\hat{{\bf Y}}\|_{H^2(\Omega)})\leq \mc{C}\epsilon(\|\hat{{\bf Y}}\|_{H^2(\Omega)}+ \|\hat{{\bf Y}}\|_{H^3(\Omega_{1/3})}.
\ee
Furthermore, one can conclude from \eqref{dc} that
\be\no
&&\|\dot{{\bf Y}}\|_{H^3(\Om)}\leq C\bigg(\|\ka_1(x)\mc{H}(\hat{{\bf v}}^1)(\bar{u}+\hat{v}_1^1)-\ka_2(x)\mc{H}(\hat{{\bf v}}^2)(\bar{u}+\hat{v}_1^2)\|_{H^2(\Om)}\\\no
&&\q\q\q+\sum_{i=2}^3\|\ka_1(x)\mc{H}(\hat{{\bf v}^1})\hat{v}_i^1-\ka_2(x)\mc{H}(\hat{{\bf v}^2})\hat{v}_i^2\|_{H^2(\Om)}+\|\nabla (\Pi_1-\Pi_2)\|_{H^2(\Omega)}\bigg)\\\no
&&\leq \mc{C}(\|k_1-k_2\|_{H^2(\Om)}+\|\hat{{\bf Y}}\|_{H^2(\Omega)})\leq \mc{C}\epsilon(\|\hat{{\bf Y}}\|_{H^2(\Omega)}+ \|\hat{{\bf Y}}\|_{H^3(\Omega_{1/3})}).
\ee

Finally, note that $\psi$ solves
\be\label{c7}\begin{cases}
\sum_{i,j=1}^3 k_{ij}(\hat{{\bf v}}^1) \p_{ij}^2 \psi + \bar{k}_1(x_1) \p_1 \psi=F(\hat{{\bf v}}^1)-F(\hat{{\bf v}}^2)-\sum_{i,j=1}^3 (k_{ij}(\hat{{\bf v}^1})-k_{ij}(\hat{{\bf v}^2}))\p_{ij}^2 \psi_2\\\no
\q\q\q-\sum_{i,j=1}^3 (k_{ij}(\hat{{\bf v}^1}) \p_i \dot{v}_j^1-k_{ij}(\hat{{\bf v}^2}) \p_i \dot{v}_j^2) - \bar{k}_1(x_1) \dot{Y}_1,\q \text{in }\ \Om,\\
\psi(L_0,x')=0,\ \q \forall x'\in \mb{E},\\
(n_2\p_2 + n_3 \p_3 )\psi(x_1,x')=0,\ \text{on }\Ga_w.
\end{cases}\ee
As for \eqref{dc36} and above, one can derive
\be\no
&&\|\psi\|_{H^3(\Om)}+\|\psi\|_{H^4(\Om_{1/3})}\\\no
&&\leq C(\|F(\hat{{\bf v}}^1)-F(\hat{{\bf v}}^2)\|_{H^1(\Om)}+\sum_{i,j=1}^3\|k_{ij}(\hat{{\bf v}^1})-k_{ij}(\hat{{\bf v}^2})\|_{H^2(\Omega)}\|\psi_2\|_{H^4(\Omega)})\\\no
&&\q\q + C(\sum_{i,j=1}^3 \|k_{ij}(\hat{{\bf v}^1}) \p_i \dot{v}_j^1-k_{ij}(\hat{{\bf v}^2}) \p_i \dot{v}_j^2\|_{H^2(\Omega)}+ \|\dot{Y}_1\|_{H^2(\Omega)})\\\no
&&\leq \mc{C}\epsilon(\|\hat{{\bf Y}}\|_{H^2(\Omega)}+ \|\hat{{\bf Y}}\|_{H^3(\Omega_{1/3})}.
\ee
Collecting all the above estimates yields
\be\no
\|{\bf Y}\|_{H^2(\Omega)}+ \|{\bf Y}\|_{H^3(\Om_{1/3})}\leq \mc{C}\epsilon (\|\hat{{\bf Y}}\|_{H^2(\Omega)}+ \|\hat{{\bf Y}}\|_{H^3(\Omega_{1/3})})\leq \frac 12 (\|\hat{{\bf Y}}\|_{H^2(\Omega)}+ \|\hat{{\bf Y}}\|_{H^3(\Omega_{1/3})}).
\ee
Therefore $\mc{T}$ is a contraction mapping and there exists a unique ${\bf v}\in \mc{S}$ such that $\mc{T}{\bf v}={\bf v}$. This is the desired solution to the problem \eqref{belt}-\eqref{bbc}. The existence and properties of the sonic front can be proved similarly as for Theorem \ref{3dirro}. Thus the proof of Theorem \ref{beltrami} is finished.

\section{Appendix: the $H^4(\Omega)$ regularity of the solution to the divergence-curl system with normal boundary conditions in cylinders}\label{div-curl}\noindent

The goal here is to show the existence of a unique solution in $H^4(\Omega)$ to the following div-curl problem
\be\label{41}\begin{cases}
\text{div }{\bf v}=0,\ \ &\text{in }\Omega,\\
\text{curl }{\bf v}= {\bf f},\ \ \ \ &\text{in }\Omega,\\
{\bf v}\cdot {\bf n}=0,\ \ \ &\text{on }\partial\Omega,
\end{cases}\ee
where ${\bf n}=(n_1,n_2,n_3)^t$ is the unit outer normal to $\p\Omega$, ${\bf f}=(f_1,f_2,f_3)^t$ is a vector function in $(H^3(\Omega))^3$ satisfying $\text{div }{\bf f}=0$ and the compatibility conditions
\be\label{410}\begin{cases}
\p_1 f_1(L_0,x')=\p_1f_1(L_1,x')=0,\ \ &\forall x'\in \p\mb{E},\\
(n_2f_3-n_3f_2)(L_0,x')=(n_2f_3-n_3f_2)(L_1,x')=0, \ \ &\forall x'\in \p\mb{E}.
\end{cases}\ee
Seth in \cite{seth16} had proved the existence and uniqueness of ${\bf v}\in H^3(\Om)$ to \eqref{41} when ${\bf f}\in H^2(\Omega)$. Here one can extend the analysis in \cite{seth16} to  get the $H^4(\Omega)$ regularity.

\begin{proposition}\label{dcp}
{\it Assume that ${\bf f}\in H^3(\Omega;\mathbb{R}^3)$ satisfying $\text{div }{\bf f}=0$ in $\Omega$ and the compatibility conditions \eqref{410}. Then there exists a unique smooth solution ${\bf v}$ to \eqref{41} in $H^4(\Omega;\mathbb{R}^3)$ with
\be\label{411}
\|{\bf v}\|_{4,\Omega}\leq C(\Om)\|{\bf f}\|_{3,\Omega}.
\ee
}\end{proposition}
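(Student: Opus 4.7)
The proof will combine a uniqueness argument based on the simple connectivity of $\Om$ with an elliptic bootstrap of Seth's $H^3$ existence result \cite{seth16}. For uniqueness, the difference ${\bf w}$ of two solutions is divergence-free and curl-free with vanishing normal trace on $\p\Om$; since $\Om$ is the cylinder over the simply connected $\mb{E}$, I can write ${\bf w} = \nabla \phi$ and reduce to $\Delta \phi = 0$ in $\Om$ with $\p_\nu \phi = 0$ on $\p\Om$, forcing $\phi$ to be constant and hence ${\bf w} \equiv 0$. For existence, Seth's theorem provides a solution ${\bf v} \in H^3(\Om)$ with $\|{\bf v}\|_{H^3(\Om)} \le C \|{\bf f}\|_{H^2(\Om)}$, so the remaining task is to bootstrap this to $\|{\bf v}\|_{H^4(\Om)} \le C \|{\bf f}\|_{H^3(\Om)}$.

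The key observation, via the identity $\text{curl curl }{\bf v} = -\Delta {\bf v} + \nabla \text{div }{\bf v}$ and $\text{div }{\bf v} = 0$, is that each component satisfies the scalar Poisson equation $-\Delta v_i = (\text{curl }{\bf f})_i$ in $\Om$. For the axial component $v_1$, the normal boundary condition ${\bf v} \cdot {\bf n} = 0$ yields homogeneous Dirichlet data on $\{x_1 = L_0\} \cup \{x_1 = L_1\}$. On the wall $\Ga_w$, combining the wall tangency $n_2 v_2 + n_3 v_3 = 0$ (differentiated in $x_1$, using that $n_2, n_3$ are independent of $x_1$) with the second and third curl equations yields the oblique condition
\[
(n_2 \p_2 + n_3 \p_3) v_1 = n_3 f_2 - n_2 f_3 \q \text{on } \Ga_w.
\]
The second compatibility in \eqref{410} states exactly that this oblique datum vanishes on the corner circles $\{x_1 = L_i\} \cap \Ga_w$, providing the zeroth-order matching with the Dirichlet data. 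Together with $(\text{curl }{\bf f})_1 \in H^2(\Om)$ and the $C^3$ regularity of $\p\mb{E}$, the standard regularity theory for mixed Dirichlet--Neumann problems in cylinders with smooth cross section yields $v_1 \in H^4(\Om)$ with $\|v_1\|_{H^4(\Om)} \le C \|{\bf f}\|_{H^3(\Om)}$.

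For the transverse pair $(v_2, v_3)$, I will proceed by a cross-sectional reduction. For each fixed $x_1$, the planar field $(v_2, v_3)$ on $\mb{E}$ has prescribed divergence $-\p_1 v_1$ (from $\text{div }{\bf v} = 0$), prescribed curl $f_1$, and zero normal trace on $\p\mb{E}$. Since $\mb{E}$ is simply connected with $C^3$ boundary, the Helmholtz decomposition $(v_2, v_3) = \nabla_{x'} \alpha + \nabla_{x'}^{\perp} \beta$ converts this into a 2D Neumann problem for $\alpha$ with data $-\p_1 v_1$ (automatically solvable, since $\int_{\mb{E}} \p_1 v_1 \, dx' = 0$ by the wall tangency) and a 2D Dirichlet problem for $\beta$ with data $f_1$. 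With $v_1 \in H^4(\Om)$ and ${\bf f} \in H^3(\Om)$, the cross-sectional elliptic regularity combined with $x_1$-differentiation of the two 2D problems yields $\nabla_{x'} \alpha, \nabla_{x'}^{\perp} \beta \in H^4(\Om)$; the first compatibility in \eqref{410}, which via $\text{div }{\bf f} = 0$ translates into $(\p_2 f_2 + \p_3 f_3)(L_i, x') = 0$ on $\p\mb{E}$, supplies exactly the corner matching required at the top and bottom circles for the $x_1$-derivatives of $\alpha$.

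The principal obstacle will be the careful bookkeeping of the corner/edge compatibility conditions along the circles $\{x_1 = L_i\} \cap \Ga_w$ where three boundary portions meet and boundary conditions of different type interact. The hypotheses \eqref{410} are engineered precisely to supply these compatibilities at the order needed for $H^4$ regularity; once they have been translated into the compatibility conditions for each scalar problem above, the standard elliptic regularity theory for mixed boundary value problems in cylinders with $C^3$ cross section closes the estimate \eqref{411}.
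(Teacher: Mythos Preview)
Your approach is genuinely different from the paper's. The paper does \emph{not} work with the Poisson problem for ${\bf v}$ directly; instead it introduces a vector potential ${\bf u}$ with ${\bf v}=\text{curl }{\bf u}$, solves $\Delta{\bf u}={\bf f}$ with the tangential boundary condition ${\bf u}\times{\bf n}=0$ and $\text{div }{\bf u}=0$ on $\p\Om$, and then decouples this into a scalar problem for $u_1$ (Neumann at the ends, Dirichlet on $\Ga_w$) and a vector problem for $(u_2,u_3)$ (Dirichlet at the ends, complementing system on $\Ga_w$). Each is solved by explicit eigenfunction expansion, giving ${\bf u}\in H^5(\Om)$ and hence ${\bf v}\in H^4(\Om)$. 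The compatibility conditions \eqref{410} are read off as exactly what is needed for $H^5$ regularity of ${\bf u}$ in this formulation. Your route via $-\Delta{\bf v}=\text{curl }{\bf f}$ and a slice-wise Helmholtz decomposition of $(v_2,v_3)$ is more direct and avoids the vector potential, which is appealing; but the paper's method has the virtue that the corner compatibility analysis is rendered completely explicit by the Fourier series.

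There are two genuine gaps in your sketch. First, for the highest-order pure $x_1$-derivatives of $(v_2,v_3)$ your stated mechanism fails: differentiating $\Delta'\alpha=-\p_1 v_1$ four times in $x_1$ would require $\p_1^5 v_1$, which you do not have from $v_1\in H^4$. This is easily repaired by using the curl equations $\p_1 v_2=\p_2 v_1+f_3$, $\p_1 v_3=\p_3 v_1-f_2$ directly to read off $\p_1(v_2,v_3)\in H^3(\Om)$, but you should say so. Second, and more seriously, the appeal to ``standard regularity theory for mixed Dirichlet--Neumann problems'' for $v_1$ is doing unverified work: your $v_1$ has \emph{inhomogeneous} Neumann data $n_3 f_2-n_2 f_3$ on $\Ga_w$, and $H^4$ regularity up to the edges $\{L_i\}\times\p\mb{E}$ is not a black box. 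In particular it is not clear from your sketch where the \emph{first} condition in \eqref{410} is actually used --- you attribute it to ``corner matching for the $x_1$-derivatives of $\alpha$,'' but your $\alpha$ has no boundary conditions at $x_1=L_i$ and hence no corner problem. In the paper's formulation both conditions in \eqref{410} arise transparently as the $H^5$ compatibility for $u_1$ and $(u_2,u_3)$ respectively; in yours, you would need to redo that corner analysis from scratch for your $v_1$ problem, and check that \eqref{410} is precisely what is required.
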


\begin{proof}
The unique solution to \eqref{41} can be given as ${\bf v}=\text{curl }{\bf u}$ with ${\bf u}$ solving
\be\label{42}\begin{cases}
\Delta {\bf u}= {\bf f},\ \ &\text{in }\Omega,\\
\text{div }{\bf u}=0,\ \ \ \ &\text{on }\partial\Omega,\\
{\bf u}\times {\bf n}=0,\ \ \ &\text{on }\partial\Omega.
\end{cases}\ee
Since $\text{div }{\bf f}=0$ in $\Om$, $\Delta \text{div }{\bf u}=0$ in $\Omega$, then the maximum principle implies that $\text{div }{\bf u}=0$ and thus $\text{curl }{\bf v}=\text{curl }\text{curl }{\bf u}=\Delta {\bf u}-\nabla \text{div }{\bf u}= {\bf f}$ in $\Omega$. The normal boundary condition ${\bf v}\cdot {\bf n}=(\text{curl }{\bf u})\cdot {\bf n}=0$ on $\p\Omega$ can be derived by the test function technique: for any $h\in C^1(\ol{\Omega})$
\be\no
&&\iint_{\p\Om}\{(\nabla\times {\bf u})\cdot {\bf n}\} h ds= \iiint_{\Omega}\text{div }(h (\nabla \times {\bf u})) dx \\\no
&&=\iiint_{\Om} \nabla h\cdot (\nabla\times {\bf u}) dx= \iint_{\p\Om} \nabla h \cdot ({\bf n}\times {\bf u}) ds=0.
\ee

The problem \eqref{42} can be decomposed as two independent subproblems:
\be\label{43}\begin{cases}
\Delta u_1= f_1,\ \ &\text{in }\Omega,\\
\p_1 u_1(L_0,x')=\p_1 u_1(L_1,x')=0,\ \ \ \ &\forall x'\in\mb{E},\\
u_1(x_1,x')=0,\ \ \ &\text{on }\Ga_w \ ,
\end{cases}\ee
and
\be\label{44}\begin{cases}
\Delta (u_2, u_3)= (f_2, f_3),\ \ &\text{in }\Omega,\\
(u_2,u_3)(L_0,x')=(u_2,u_3)(L_1,x')=0,\ \ \ \ &\forall x'\in \mb{E},\\
(n_2u_3-n_3 u_2)(x_1,x')=(\p_2 u_2+\p_3 u_3)(x_1,x')=0,\ \ \ &\text{on }\Ga_{w}.
\end{cases}\ee

Suppose that there exists a unique smooth solution $u_1\in H^5(\Omega)$ to \eqref{43}. Then
\be\no\begin{cases}
\p_1 (\p_2^2+\p_3^2) u_1(L_0,x')=\p_1 (\p_2^2+\p_3^2) u_1(L_1,x')=0,\ \ &\forall x'\in \mb{E},\\
\p_1^3 u_1(x_1,x')=0,\ \ \ &\text{on }\Ga_w,
\end{cases}\ee
from which one can deduce the compatibility condition
\be\label{431}
\p_1 f_1(L_0,x')=\p_1 f_1(L_1,x')=0,\ \ \text{on }\p\mb{E}.
\ee

Similarly, if there exists a unique smooth solution $(u_2,u_3)\in(H^5(\Omega))^2$ to \eqref{44}, then
\be\no\begin{cases}
(\p_2^3,\p_2 \p_3^2)u_2(L_0/L_1,x')=(\p_3\p_2^2,\p_3^3)u_3(L_0/L_1,x')=0,\ \ &\forall x'\in \mb{E},\\
(n_2\p_1^2 u_3- n_3 \p_1^2 u_2)(x_1,x')=\p_1^2 (\p_2 u_2 +\p_3 u_3)(x_1,x')=0,\ \ \ &\text{on }\Ga_w,
\end{cases}\ee
from which the following compatibility conditions hold
\be\no
(n_2 f_3-n_3 f_2)(L_0/L_1,x')=(\p_2 f_2+\p_3 f_3)(L_0/L_1,x')=0,\ \ \text{on }\p\mb{E}.
\ee

Consider the eigenvalue problem
\be\label{eig}\begin{cases}
-(\p_2^2+\p_3^2) q(x')= \al q(x'),\ \ &\forall x' \in \mb{E},\\
q(x')=0,\ \ &\text{on }\p\mb{E}.
\end{cases}\ee
It is well-known that \eqref{eig} has a family of eigenvalues $ 0<\al_1 \leq \al_2 \leq ... $  and $ \al_m \to +\infty$  as $m \to +\infty$ with associated eigenvectors $(q_m(x'))_{m=1}^{\infty}$, which constitute an orthonormal basis in $L^2(\mb{E})$ and an orthogonal basis in $H_0^1(\mb{E})$. Consider the eigenvalue problem
\be\label{2deige}\begin{cases}
-(\p_2^2+\p_3^2) {\bf e}(x')= \beta {\bf e}(x'),\ \ &\forall x' \in \mb{E},\\
(n_2e_3-n_3 e_2)(x')=(\p_2 e_2+\p_3 e_3)(x_1,x')=0,\ \ &\text{on }\p\mb{E},
\end{cases}\ee
where ${\bf e}(x')=(e_2(x'), e_3(x'))$. It was proved in \cite{seth16} that the eigenvalue problem \eqref{2deige} has a family of eigenvalues $ 0<\beta_1 \leq \beta_2 \leq ... $  and $ \beta_m \to +\infty$  as $m \to +\infty$ with associated eigenvectors $({\bf e}^m(x'))_{m=1}^{\infty}$, which constitute an orthonormal basis in $(L^2(\mb{E}))^2$.

Assume that the solutions to \eqref{43} and \eqref{44} respectively are given by
\be\no
u_1(x_1,x')=\sum_{m=1}^{\infty} p_m(x_1) q_m(x'),\ \ \ (u_2,u_3)(x_1,x')= \sum_{m=1}^{\infty} c_m(x_1) {\bf e}^m(x').
\ee
Substituting these into \eqref{43} and \eqref{44} respectively, one gets for every $m\geq 1$
\be\label{45}\begin{cases}
p_m''(x_1)=\al_m p_m(x_1) + f_{1m}(x_1),\ \forall x_1\in [L_0,L_1],\\
p_m'(L_0)=p_m'(L_1)=0
\end{cases}\ee
and
\be\label{46}\begin{cases}
c_m''(x_1)=\beta_m c_m(x_1) + h_{m}(x_1),\ \forall x_1\in [L_0,L_1],\\
c_m(L_0)=c_m(L_1)=0,
\end{cases}\ee
where
\be\no
&&f_{1m}(x_1)=\iint_{\mb{E}} f_1(x_1,x')q_m(x') dx',\\\no
&&h_m(x_1)=\iint_{\mb{E}} e_2^m(x')f_2(x_1,x')+ e_3^m(x') f_3(x_1,x') dx'.
\ee
It is easy to derive from \eqref{45} and \eqref{46} that
\be\label{47}
&&\int_{L_0}^{L_1} \sum_{i=0}^5 \alpha_m^{5-i} |p_m^{(i)}(x_1)|^2 dx_1\leq C_0\int_{L_0}^{L_1} \sum_{i=0}^3 \alpha_m^{3-i} |f_{1m}^{(i)}(x_1)|^2 dx_1,\\\label{48}
&&\int_{L_0}^{L_1} \sum_{i=0}^5 \beta_m^{5-i} |c_m^{(i)}(x_1)|^2 dx_1\leq C_0\int_{L_0}^{L_1} \sum_{i=0}^3 \beta_m^{3-i} |h_{m}^{(i)}(x_1)|^2 dx_1.
\ee
Using \eqref{47}, one may argue as in Lemma \ref{li50} to derive the estimate
\be\label{49}
\|u_1\|_{H^5(\Omega)}^2\leq C(\Omega)\int_{L_0}^{L_1}\sum_{m=1}^{\infty} \sum_{i=0}^5 \alpha_m^{5-i} |p_m^{(i)}(x_1)|^2 dx_1\leq C(\Omega)\|f_1\|_{H^3(\Omega)}^2.
\ee

Consider the problem
\be\label{55}\begin{cases}
(\p_2^2+\p_3^2) (w_2(x'), w_3(x'))= (g_2(x'), g_3(x')),\ \ &\text{in }\mb{E},\\
(n_2w_3-n_3 w_2)(x')=(\p_2 w_2+\p_3 w_3)(x')=0,\ \ \ &\text{on }\p\mb{E}.
\end{cases}\ee
It is proved in \cite{seth16} that \eqref{55} is an elliptic system with complementing boundary conditions in the sense defined in \cite{adn64}. Thus by the elliptic estimates in \cite{adn64} and the uniqueness for smooth solution to \eqref{55}, for any nonnegative integer $s\geq 0$ there holds
\be\no
\|(w_2,w_3)\|_{H^{s+2}(\mb{E})}\leq C(\mb{E})\|(g_2,g_3)\|_{H^{s}(\mb{E})}.
\ee
This, together with \eqref{48}, would imply that
\be\label{50}
\|(u_2,u_3)\|_{H^5(\Omega)}^2\leq C(\Omega)\int_{L_0}^{L_1}\sum_{m=1}^{\infty}  \sum_{i=0}^5 \beta_m^{5-i} |c_m^{(i)}(x_1)|^2 dx_1\leq C(\Omega)\|(f_2,f_3)\|_{H^3(\Omega)}^2.
\ee
The estimate \eqref{411} follows from \eqref{49} and \eqref{50} directly. The proof of Proposition \ref{dcp} is completed.
\end{proof}

{\bf Acknowledgement.} Weng is supported by National Natural Science Foundation of China (Grants No. 12071359, 12221001). Xin is supported by the Zheng Ge Ru Foundation, Hong Kong RGC Earmarked Research Grants CUHK-14301421, CUHK-14301023, CUHK-14302819 and CUHK-14300819, and the key projects of NSFC Grants No.12131010 and No. 11931013.

{\bf Data Availability Statement.} No data, models or code were generated or used during the study.

{\bf Conflict of interest.} On behalf of all authors, the corresponding author states that there is no conflict of interests.

\end{document}